\definecolor{purple}{RGB}{128,0,128}
\def\H{{\mathbb H}}
\def\ov{\overline}
\def\ii{\textbf{\itshape i}}
\def\jj{\textbf{\itshape j}}
\def\kk{\textbf{\itshape k}}
\def\Stab{{\rm Stab}}
\newcommand{\ps}{{\Psi^{\rm st}_{-1}}}
\newcommand{\g}{{\mathfrak g}}
\newcommand{\Lie}{{\rm Lie}}
\newcommand{\PiG}{{\Pi\!\Gr}}
\newcommand{\id}{{\rm id}}
\renewcommand{\mathbb}{\mathds}
\newcommand{\Z}{\mathbb Z}
\newcommand{\C}{\mathbb C}
\newcommand{\R}{{\mathbb R}}
\newcommand{\E}{\mathbb E}
\newcommand{\gr}{\mathrm{gr}}
\newcommand{\Gr}{\operatorname{Gr}}
\newcommand{\GL}{\operatorname{GL}}
\newcommand{\Q}{\operatorname{Q}}
\newcommand{\PGL}{\operatorname{PGL}}
\newcommand{\ord}{\textsf{ord}}
\newtheorem{theorem}{Theorem}
\newtheorem{corollary}[theorem]{Corollary}
\newtheorem{proposition}[theorem]{Proposition}
\newtheorem*{theorem*}{Theorem}
\theoremstyle{definition}
\newtheorem{remark}[theorem]{Remark}
\newcommand{\mcJ}{\mathcal J}
\newcommand{\mcM}{\mathcal M}
\newcommand{\mcN}{\mathcal N}
\newcommand{\mcO}{\mathcal O}
\newcommand{\mcE}{\mathcal E}
\newcommand{\mcH}{\mathcal H}
\newcommand{\al}{\alpha}
\newcommand{\cG}{\sc\mbox{G}\hspace{1.0pt}}
\theoremstyle{plain}
\newtheorem{prop}{Proposition}[section]
\newtheorem{lem}[prop]{Lemma}
\newtheorem{thm}[prop]{Theorem}
\newtheorem{cor}[prop]{Corollary}
\theoremstyle{definition}
\newtheorem{subsec}[prop]{}
\newtheorem{rem}[prop]{Remark}
\newcommand{\M}{{\mathcal M}}
\newcommand{\into}{\hookrightarrow}
\newcommand{\isoto}{\overset{\sim}{\to}}
\newcommand{\labelto}[1]{\xrightarrow{\makebox[1.5em]{\scriptsize ${#1}$}}}
\newcommand{\hs}{\kern 0.8pt}
\newcommand{\hssh}{\kern 1.2pt}
\newcommand{\hshs}{\kern 1.6pt}
\newcommand{\hssss}{\kern 2.0pt}
\newcommand{\hm}{\kern -0.8pt}
\newcommand{\hmm}{\kern -1.2pt}
\newcommand{\mO}{{\mathcal O}}
\newcommand{\uprho}{\hs^\rho\hm}
\newcommand{\Aut}{{\rm Aut}}
\newcommand{\G}{{\Gamma}}
\newcommand{\SmallMatrix}[1]{\text{\tiny\arraycolsep=0.4\arraycolsep\ensuremath
		{\begin{pmatrix}#1\end{pmatrix}}}}
\newcommand{\Mat}[1]{\text{\SMALL\arraycolsep=0.4\arraycolsep\ensuremath
		{\begin{pmatrix}#1\end{pmatrix}}}}
\def\H{{\mathbb H}}
\def\ov{\overline}
\def\ii{\textbf{\itshape i}}
\def\jj{\textbf{\itshape j}}
\def\kk{\textbf{\itshape k}}
\def\Stab{{\rm Stab}}
\begin{document}
\title[$\Pi$-symmetric super-Grassmannian]
{Automorphisms and real structures for\\ a $\Pi$-symmetric super-Grassmannian}

\author{Elizaveta Vishnyakova\\
{\Tiny appendix by}\\
Mikhail Borovoi}

\begin{abstract}
	Any complex-analytic vector bundle $\E$ admits naturally defined homotheties $\phi_{\al}$, $\al\in \C^*$,  i.e. $\phi_{\al}$ is the multiplication of a local section by a complex number $\al$.
We investigate the question when such  automorphisms can be lifted to a non-split supermanifold corresponding to $\E$. Further, we compute the automorphism supergroup of a $\Pi$-symmetric super-Grassmannian $\Pi\!\Gr_{n,k}$, and, using  Galois cohomology, we classify the real structures on $\Pi\!\Gr_{n,k}$ and compute the corresponding supermanifolds of real points.
\end{abstract}

\date{\today}

\maketitle

\setcounter{tocdepth}{1}
\tableofcontents

\section{Introduction}
  Let $\E$ be a complex-analytic vector bundle over a complex-analytic manifold $M$.
  There are natural homotheties $\phi_{\al}$, $\al\in \C^*$, defined on local sections as the multiplication by a complex number  $\al\ne 0$.
   Any    automorphism $\phi_{\al}: \E\to \E$ may be naturally extended to an automorphism $\wedge \phi_{\al}$ of $\bigwedge\E$.
   Let $\mcE$ be the locally free sheaf corresponding to $\E$. Then the ringed space $(M,\bigwedge\mcE)$ is a split supermanifold
   equipped with the  supermanifold automorphisms $(id,\wedge \phi_{\al})$, $\al\in \C^*$.
   Let $\mcM$ be any non-split supermanifold with retract $(M,\bigwedge\mcE)$.
   We investigate the question whether the automorphism $\wedge \phi_{\al}$ can be lifted to $\mcM$.
   We show that this question is related to the notion of the order of the supermanifold $\mcM$ introduced in \cite{Rothstein};
   see Section \ref{sec Order of a supermanifold}.

  Let $\M=\Pi\!\Gr_{n,k}$ be a $\Pi$-symmetric super-Grassmannian; see Section \ref{sec charts on Gr} for the definition.
   We use obtained results to compute the automorphism group $\operatorname{Aut} \mathcal M$ and the automorphism supergroup, given in terms of a super-Harish-Chandra pair.

  \begin{theorem*}[Theorem \ref{t:Aut}]
  	
  		{\bf (1)} If $\mathcal M = \Pi\!\Gr_{n,k}$, where $n\ne 2k$, then
  	$$
  	\operatorname{Aut} \mathcal M\simeq \PGL_n(\mathbb C) \times \{\id, \Psi^{st}_{-1} \} .
  	$$

  	The automorphism supergroup is given by the Harish-Chandra pair
  	$$
  	( \PGL_n(\mathbb C) \times \{\id, \Psi^{st}_{-1} \},  \mathfrak{q}_{n}(\mathbb C)/\langle E_{2n}\rangle).
  	$$
  	
  		{\bf (2)} If $\mathcal M = \Pi\!\Gr_{2k,k}$, where $k\geq 2$, then
  	$$
  	\operatorname{Aut} \mathcal M\simeq  \PGL_{2k}(\mathbb C) \rtimes  \{\id, \Theta, \Psi^{st}_{-1}, \Psi^{st}_{-1}\circ \Theta \},
  	$$
  	where $\Theta^2 = \Psi^{st}_{-1}$, $\Psi^{st}_{-1}$ is a central element of $\Aut\,\M$, and $\Theta \circ g\circ \Theta^{-1} = (g^t)^{-1}$ for $g\in  \PGL_{2k}(\mathbb C)$.
  	
  	The automorphism supergroup is given by the Harish-Chandra pair
  	$$
  	(\PGL_{2k}(\mathbb C) \rtimes  \{\id, \Psi^{st}_{-1}, \Theta, \Psi^{st}_{-1}\circ \Theta \},  \mathfrak{q}_{2k}(\mathbb C)/\langle E_{4k}\rangle),
  	$$
  	where $\Theta \circ C\circ \Theta^{-1} = - C^{t_i}$ for $C\in  \mathfrak{q}_{2k}(\mathbb C)/\langle E_{4k}\rangle$ and $ \Psi^{st}_{-1}\circ C \circ  (\Psi^{st}_{-1})^{-1} = (-1)^{\tilde{C}} C$,  where $\tilde C\in\Z/2\Z$ is the  parity of $C$.

  	{\bf (3)} If $\mathcal M = \Pi\!\Gr_{2,1}$, then
  	$$
  		\operatorname{Aut} \mathcal M\simeq \PGL_{2}(\mathbb C) \times \mathbb C^*.
  	$$
  	The automorphism supergroup is given by the Harish-Chandra pair
  	$$
  	( \PGL_{2}(\mathbb C) \times \mathbb C^*,  	\mathfrak g \rtimes \langle z\rangle).
  	$$
  	Here $\mathfrak g$ is a $\Z$-graded Lie superalgebra described in Theorem \ref{teor vector fields on supergrassmannians}, $z$ is the grading operator of $\mathfrak{g}$.  The action of $\PGL_{2}(\mathbb C) \times \mathbb C^*$ on $z$ is trivial, and $\phi_{\al}\in \C^*$ multiplies $X\in \mathfrak v(\Pi\!\Gr_{2,1})_k$ by $\al^k$.
\end{theorem*}
Here $\ps=(\id,\psi^{st}_{-1})\in \operatorname{Aut} \mathcal M$, where $\psi^{st}_{-1}$
is an automorphism of the structure sheaf $\mcO$ of $\mcM$
defined by $\psi^{st}_{-1}(f) = (-1)^{\tilde f} f$ for a homogeneous local section  $f$ of $\mcO$, where we denoted by $\tilde f\in\Z/2\Z$  the  parity of $f$. We denote by $C^{t_i}$ the $i$-transposition of the matrix $C$, see (\ref{eq i transposition}).  The automorphism $\Theta$ is constructed in Section \ref{sec construction of Theta}. We denoted by $g^t$ the transpose of $g$.

We classify the real structures on a $\Pi$-symmetric super-Grassmannian $\Pi\!\Gr_{n,k}$ using Galois cohomology.

\begin{theorem*}[Theorem \ref{c:Pi}]
	The number  of the equivalence classes of real structures $\mu$ on $\mcM$, and  representatives of these classes, are given in the list below:
\begin{enumerate}
	\item[\rm (i)] If $n$ is odd, then there are two equivalence classes with representatives
	$$
	\mu^o, \quad (1,\ps)\circ\mu^o.
	$$
	\item[\rm (ii)] If $n$ is even and $n\neq 2k$,
	then there are four equivalence classes with representatives
	$$
	\mu^o,\quad (1,\ps)\circ\mu^o, \quad  (c_J,1)\circ\mu^o, \quad (c_J,\ps)\circ\mu^o.
	$$
	
	\item[\rm (iii)] If $n=2k\ge 4$, then there are $k+3$ equivalence classes with representatives
	$$
	\mu^o,\quad (c_J,1)\circ\mu^o, \quad  (c_r,\Theta)\circ\mu^o, \,\, r= 0,\ldots, k.
	$$

	\item[\rm (iv)] 	If $(n,k)= (2,1)$, then there are two equivalence classes with representatives
	$$
	\mu^o,\quad  (c_J,1)\circ\mu^o.
	$$
\end{enumerate}
Here  $\mu^o$ denotes the standard real structure on $\M=\PiG_{n,k}$, see Section \ref{ss:real-structures}.
Moreover, $c_J\in\PGL_n(\C)$ and $c_r\in\PGL_{2k}(\C)$ for $r= 0,\ldots, k$ are certain elements
constructed  in Proposition \ref{p:H1} and Subsection \ref{ss:cp}, respectively.
\end{theorem*}

Further, we describe the corresponding real subsupermanifolds when they exist.
 Let $\mu$ be a real structure on $\mcM=\PiG_{n,k}$, and assume that the  set of fixed points
$ M^{\mu_0}$ is non-empty. Consider the ringed space $\M^{\mu}:= (M^{\mu_0}, \mcO^{\mu^*})$
where $\mcO^{\mu^*}$ is the sheaf of fixed points of $\mu^*$ over $M^{\mu}$.
Then $\M^{\mu}$ is a real supermanifold.
We describe this supermanifold in Theorem \ref{theor real main}.

 \textbf{Acknowledgments:}  The author was partially  supported by
 Coordena\c{c}\~{a}o de Aperfei\c{c}oamento de Pessoal de N\'{\i}vel Superior - Brasil (CAPES) -- Finance Code
 001, (Capes-Humboldt Research Fellowship),  by FAPEMIG, grant APQ-01999-18, Rede Mineira de Matemática-RMMAT-MG, Projeto RED-00133-21. We thank Peter \linebreak Littelmann for hospitality and the wonderful working atmosphere at the University of Cologne and we thank Dmitri Akhiezer for helpful comments. We also thank  Mikhail Borovoi for suggesting to write this paper and for writing the appendix.

\section{Preliminaries}

\subsection{Supermanifolds}
This paper is devoted to the study of the automorphism supergroup of a $\Pi$-symmetric super-Grassmannian $\Pi\!\Gr_{n,k}$, and to a classification of real structures on $\Pi\!\Gr_{n,k}$.  Details about the theory of supermanifolds can be found in \cite{Bern,Leites,BLMS}. As usual, the superspace $\mathbb C^{n|m}:= \mathbb C^{n}\oplus \mathbb C^{m}$ is a $\Z_2$-graded vector space over $\mathbb C$ of dimension $n|m$.   A {\it superdomain in $\mathbb{C}^{n|m}$} is a ringed space $\mathcal U:=(U,\mathcal F_U\otimes \bigwedge (\mathbb C^{m})^*)$, where $U\subset \mathbb C^{n}$ is an open set and $\mathcal F_U$ is the sheaf of holomorphic functions on $U$. If $(x_a)$ is a system of coordinates in $U$ and $(\xi_b)$ is a basis in $(\mathbb C^{m})^*$ we call $(x_a,\xi_b)$ a system of coordinates in $\mathcal U$. Here $(x_a)$ are called even coordinates of $\mathcal U$, while $(\xi_b)$ are called odd ones.
A {\it supermanifold} $\mcM = (M,\mathcal{O})$ of dimension $n|m$ is a $\mathbb{Z}_2$-graded
ringed space that is locally isomorphic to a super\-domain in
$\mathbb{C}^{n|m}$. Here the underlying space $M$ is a complex-analytic manifold.  A
{\it morphism} $F:(M,\mcO_{\mcM}) \to (N,\mcO_{\mcN})$ of two
supermanifolds is, by definition, a morphism of the corresponding $\mathbb{Z}_2$-graded locally ringed spaces.
In more details, $F = (F_{0},F^*)$ is a pair, where $F_{0}:M\to N$ is a holomorphic map and
$F^*: \mathcal{O}_{\mathcal N}\to (F_{0})_*(\mathcal{O}_{\mathcal M})$
is a homomorphism of sheaves of $\mathbb{Z}_2$-graded local superalgebras.
We see that the morphism $F$ is even, that is, $F$ preserves the $\mathbb{Z}_2$-gradings of the sheaves.
 A morphism $F: \mcM\to \mcM$ is called an {\it automorphism of $\mcM$} if $F$ is an automorphism of the corresponding $\mathbb{Z}_2$-graded ringed spaces. The automorphisms of $\mcM$ form a group, which we denote by  $\operatorname{Aut} \mathcal M$. Note that in this paper we also consider the automorphism supergroup, see a definition below.

A supermanifold $\mcM=(M,\mcO)$  is called {\it split}, if its structure sheaf is isomorphic to $\bigwedge \mathcal E$, where $\mathcal E$ is a sheaf of sections of a holomorphic  vector bundle $\mathbb E$ over $M$.  In this case the structure sheaf of $\mcM$ is $\mathbb Z$-graded, not only $\Z_2$-graded. There is  a functor assigning to any supermanifold a split supermanifold.  Let us briefly remind this construction. Let $\mcM=(M,\mathcal O)$ be a supermanifold. Consider the following filtration in $\mathcal O$
$$
\mathcal O = \mathcal J^0 \supset \mathcal J \supset \mathcal J^2 \supset\cdots \supset \mathcal J^p \supset\cdots,
$$
where $\mathcal J$ is the subsheaf of ideals in $\mcO$ locally generated by odd elements of $\mcO$. We define
$$
\mathrm{gr} \mathcal M := (M,\mathrm{gr}\mathcal O),\quad \text{where} \quad
\mathrm{gr}\mathcal O: = \bigoplus_{p \geq 0} \mathcal J^p/\mathcal J^{p+1}.
$$
The supermanifold $\mathrm{gr} \mathcal M$ is split and it is called the {\it retract} of $\mcM$. The underlying space of $\mathrm{gr} \mathcal M$ is the complex-analytic manifold $(M,\mathcal O/\mathcal J)$, which coincides with $M$. The structure sheaf $\mathrm{gr}\mathcal O$ is isomorphic to $\bigwedge \mathcal E$, where $\mathcal E= \mathcal J/\mathcal J^{2}$ is a locally free sheaf of $\mathcal O/\mathcal J$-modules on $M$.

Further let $\mcM =(M,\mcO_{\mcM})$ and $\mathcal{N}= (N,\mcO_{\mcN})$ be two supermanifolds, $\mathcal J_{\mcM}$ and  $\mathcal J_{\mcN}$ be the subsheaves of ideals in $\mcO_{\mcM}$ and $\mcO_{\mcN}$, which are locally generated by odd elements in $\mcO_{\mcM}$ and in $\mcO_{\mcN}$, respectively.   Any morphism $F:\mcM \to \mathcal{N}$ preserves these shaves of ideals, that is $F^*(\mcJ_{\mcN}) \subset (F_{0})_*(\mathcal{J}_{\mathcal M})$, and more generally $F^*(\mcJ^p_{\mcN}) \subset (F_{0})_*(\mathcal{J}^p_{\mathcal M})$ for any $p$. Therefore $F$ induces naturally a morphism $\mathrm{gr}(F): \mathrm{gr} \mathcal M\to \mathrm{gr} \mathcal N$. Summing up, the functor $\gr$ is defined.

\subsection{A classification theorem for supermanifolds}\label{sec A classification theorem}

Let $\mathcal M=(M,\mathcal O)$ be a (non-split) supermanifold. Recall that we denoted by $\operatorname{Aut} \mathcal M$ the group of all (even) automorphisms of $\mathcal M$. Denote by $\mathcal{A}ut \mathcal O$ the sheaf of automorphisms of $\mcO$. Consider the following subsheaf of $\mathcal{A}ut \mathcal O$
\begin{align*}
\mathcal{A}ut_{(2)} \mathcal O := \{F\in \mathcal{A}ut \mathcal O\,\,|\,\,\, \gr (F) =id\}.
\end{align*}
This sheaf plays an important role in the classification of supermanifolds, see below.  The sheaf
$\mathcal{A}ut\mathcal{O}$ has the following filtration
\begin{equation*}
\mathcal{A}ut \mathcal{O}=\mathcal{A}ut_{(0)}\mathcal{O} \supset
\mathcal{A}ut_{(2)}\mathcal{O}\supset \ldots \supset
\mathcal{A}ut_{(2p)}\mathcal{O} \supset \ldots ,
\end{equation*}
where
$$
\mathcal{A}ut_{(2p)}\mathcal{O} = \{a\in\mathcal{A}ut\mathcal{O}\mid a(u)\equiv u\mod \mathcal{J}^{2p} \,\, \text{for any}\,\,\, u\in \mcO\}.
$$
Recall that $\mathcal J$ is the subsheaf of ideals generated by odd elements in $\mathcal O$. Let $\E$ be the bundle corresponding to the locally free sheaf $\mcE=\mcJ/\mcJ^2$ and let $\operatorname{Aut} \E$ be the group of all automorphisms of  $\E$.  Clearly, any automorphism of $\E$ gives rise to an automorphism of $\gr \mcM$, and thus we get a natural action of the group $\operatorname{Aut} \E$ on the sheaf $\mathcal{A}ut (\gr\mathcal{O})$ by $Int:
(a,\delta)\mapsto a\circ \delta\circ a^{-1}$, where
$\delta\in\mathcal{A}ut (\gr\mathcal{O})$ and $a\in
\operatorname{Aut} \E$. Clearly, the group
$\operatorname{Aut} \E$ leaves invariant the
subsheaves $\mathcal{A}ut_{(2p)} \gr\mathcal{O}$. Hence
$\operatorname{Aut} \E$ acts on the cohomology sets
$H^1(M,\mathcal{A}ut_{(2p)} \gr\mathcal{O})$.  The unit element
$\epsilon\in H^1(M,\mathcal{A}ut_{(2p)} \gr\mathcal{O})$ is fixed under this action. We denote by $H^1(M,\mathcal{A}ut_{(2p)}\gr\mathcal{O})/
\operatorname{Aut} \E$ the set of orbits of the action in $H^1(M,\mathcal{A}ut_{(2p)}\gr\mathcal{O})$ induced by $Int$.

Denote by $[\mcM]$ the class of supermanifolds which are isomorphic to $\mcM= (M,\mcO)$.
(Here we consider complex-analytic supermanifolds up to isomorphisms inducing the identical isomorphism of the base spaces.) The following theorem was proved in \cite{Green}.

\begin{theorem}[{\bf Green}]\label{Theor_Green}
	Let $(M,\bigwedge \mcE)$ be a fixed split supermanifold. Then
	$$
	\begin{array}{c}
	\{[\mcM ] \mid \gr\mathcal{O} \simeq\bigwedge \mcE\}
	\stackrel{1:1}{\longleftrightarrow}
	H^1(M,\mathcal{A}ut_{(2)}\gr\mathcal{O})/
	\operatorname{Aut} \E.
	\end{array}
	$$
	The split supermanifold $(M,\bigwedge \mcE)$ corresponds to the fixed point $\epsilon$.
\end{theorem}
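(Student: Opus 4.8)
The plan is to prove this by a non-abelian \v{C}ech cohomology argument: one realizes each supermanifold with retract $\bigwedge\mcE$ as a gluing cocycle valued in the sheaf $\mathcal{A}ut_{(2)}\gr\mcO$, and shows the resulting cohomology class is a complete invariant up to the $\operatorname{Aut}\E$-action. Recall first that every supermanifold is locally split: each point of $M$ has a neighbourhood over which $\mcO$ is isomorphic, as a sheaf of $\Z_2$-graded superalgebras, to the restriction of $\bigwedge\mcE$. Fix once and for all an isomorphism $\theta\colon\gr\mcO\xrightarrow{\sim}\bigwedge\mcE$ of $\Z$-graded structure sheaves, which exists by hypothesis. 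Then choose an open cover $\{U_i\}$ of $M$ and isomorphisms $\eta_i\colon\mcO|_{U_i}\xrightarrow{\sim}(\bigwedge\mcE)|_{U_i}$ so that the induced graded isomorphism satisfies $\gr(\eta_i)=\theta|_{U_i}$; starting from an arbitrary local splitting this is arranged by composing it with a degree-preserving automorphism of $(\bigwedge\mcE)|_{U_i}$, i.e.\ with an automorphism of the bundle $\E|_{U_i}$. Put $g_{ij}:=\eta_i\circ\eta_j^{-1}$ on $U_{ij}:=U_i\cap U_j$. Since $\gr(g_{ij})=\gr(\eta_i)\circ\gr(\eta_j)^{-1}=\theta\circ\theta^{-1}=\id$, each $g_{ij}$ is a section of $\mathcal{A}ut_{(2)}\gr\mcO$, the $g_{ij}$ satisfy $g_{ij}g_{jk}=g_{ik}$, and we obtain a class $[\{g_{ij}\}]\in H^1(M,\mathcal{A}ut_{(2)}\gr\mcO)$.

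Next I would check that this class is well defined modulo the induced $\operatorname{Aut}\E$-action. Passing to a refinement of the cover replaces the cocycle by an equivalent one, as usual. Replacing $\eta_i$ by another trivialization $\eta_i'$ with $\gr(\eta_i')=\theta|_{U_i}$ changes $g_{ij}$ by the coboundary of $h_i:=\eta_i'\circ\eta_i^{-1}$, a section of $\mathcal{A}ut_{(2)}\gr\mcO$ over $U_i$, so the cohomology class is unchanged. Replacing $\theta$ by $\theta'=a\circ\theta$ with $a\in\operatorname{Aut}\E$ forces the trivializations to become $a\circ\eta_i$, hence replaces $g_{ij}$ by $a\,g_{ij}\,a^{-1}$, which is the image of $[\{g_{ij}\}]$ under the $Int$-action of $a$. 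So $[\mcM]\mapsto[\{g_{ij}\}]$ is a well-defined map from $\{[\mcM]\mid\gr\mcO\simeq\bigwedge\mcE\}$ to $H^1(M,\mathcal{A}ut_{(2)}\gr\mcO)/\operatorname{Aut}\E$, and the split supermanifold $(M,\bigwedge\mcE)$, for which one may take $g_{ij}=\id$, goes to the distinguished point $\epsilon$.

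For surjectivity, from a cocycle $\{g_{ij}\}\in Z^1(M,\mathcal{A}ut_{(2)}\gr\mcO)$ one glues the local models $(\bigwedge\mcE)|_{U_i}$ along the $g_{ij}$ to get a sheaf $\mcO$ of $\Z_2$-graded superalgebras, hence a supermanifold $\mcM=(M,\mcO)$; since each $g_{ij}$ induces the identity on $\gr$, forming $\gr$ commutes with the gluing, so $\gr\mcO\simeq\bigwedge\mcE$. For injectivity, suppose $\mcM,\mcM'$ have cocycles $\{g_{ij}\},\{g_{ij}'\}$ over a common cover and $\Phi\colon\mcM\to\mcM'$ is an isomorphism inducing $\id_M$. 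Reading $\Phi$ through the chosen trivializations produces local automorphisms $\phi_i$ of $(\bigwedge\mcE)|_{U_i}$ with $g_{ij}'=\phi_i\,g_{ij}\,\phi_j^{-1}$; applying $\gr$ and using $\gr(g_{ij})=\gr(g_{ij}')=\id$ shows the graded parts $\gr(\phi_i)$ agree on overlaps and glue to a global $a\in\operatorname{Aut}\E$, and writing $\phi_i=a\circ\psi_i$ with $\psi_i$ a section of $\mathcal{A}ut_{(2)}\gr\mcO$ shows that $\{g_{ij}'\}$ is cohomologous to $a\,\{g_{ij}\}\,a^{-1}$, hence equals $a\cdot[\{g_{ij}\}]$ in the quotient.

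The step needing most care — and essentially the only subtle one — is this last piece of bookkeeping: the coboundary equivalence of cocycles and conjugation by $\operatorname{Aut}\E$ do not commute at the level of cochains, so one must observe that conjugation by an automorphism in $\operatorname{Aut}\E$ preserves the subsheaf $\mathcal{A}ut_{(2)}\gr\mcO$ together with its $1$-coboundaries (this uses that $\operatorname{Aut}\E$ normalizes each term $\mathcal{A}ut_{(2p)}\gr\mcO$ of the filtration \eqref{filtr_Aut_O}). Granting this, the two operations descend compatibly to $H^1(M,\mathcal{A}ut_{(2)}\gr\mcO)/\operatorname{Aut}\E$ and the displayed bijection follows. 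Everything else is the standard formalism of $H^1$ with coefficients in a sheaf of (non-abelian) groups.
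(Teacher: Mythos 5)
The paper does not include a proof of this theorem; it simply cites Green's original article \cite{Green}. Your argument is correct and is essentially the standard proof of that result (local splitness, gluing cocycles in $\mathcal{A}ut_{(2)}\gr\mcO$, well-definedness modulo coboundaries, and the $\operatorname{Aut}\E$-action accounting for the choice of identification $\gr\mcO\simeq\bigwedge\mcE$), so there is nothing in the paper's own treatment to compare against.
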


\subsection{Tangent sheaf of $\mcM$ and $\gr \mcM$}
Let again $\mathcal M=(M,\mathcal O)$ be a (non-split) supermanifold. The {\it tangent sheaf} of a supermanifold $\mcM$ is by definition the sheaf $\mathcal T = \mathcal{D}er\mcO$ of
derivations of the structure sheaf $\mcO$. Sections of the sheaf $\mathcal T$ are called {\it holomorphic vector fields} on $\mcM$.
The vector superspace $\mathfrak v(\mcM) = H^0(M, \mathcal T)$ of all holomorphic vector fields is a complex Lie superalgebra
with the bracket
$$
[X,Y]= X\circ Y- (-1)^{\tilde X\tilde Y} Y\circ X,\quad X,Y\in \mathfrak v(\mcM),
$$
where $\tilde Z$ is the parity of an element $Z\in \mathfrak v(\mcM)$.
 The Lie superalgebra $\mathfrak v(\mcM)$ is finite dimensional if $M$ is compact.

Let $\dim \mcM=n|m$.  The tangent sheaf $\mathcal T$ possesses the following filtration:
$$
\mathcal T=\mathcal T_{(-1)} \supset \mathcal T_{(0)}  \supset \mathcal T_{(1)}  \supset \cdots \supset \mathcal T_{(m)} \supset \mathcal T_{(m+1)}=0,
$$
where
$$
\mathcal T_{(p)} = \{ v\in \mathcal T \,\,|\,\, v(\mcO) \subset \mcJ^p,\,\, v(\mcJ) \subset \mcJ^{p+1}  \},\quad p\geq 0.
$$

Denote by $\mathcal T_{\gr}$ the tangent sheaf of the retract $\gr \mcM$. Since the structure sheaf $\gr \mcO$ of $\gr \mcM$ is $\Z$-graded, the sheaf  $\mathcal T_{\gr}$ has the following induced $\Z$-grading
$$
\mathcal T_{\gr} = \bigoplus_{p\geq -1} (\mathcal T_{\gr})_{p},
$$
 where
 $$
 (\mathcal T_{\gr})_{p}= \{\, v\in \mathcal T_{\gr} \,\,|\,\, v(\gr\mcO_q) \subset \gr\mcO_{q+p}\,\, \text{for any}\,\, q\in \mathbb Z  \}.
 $$

 We have the following exact sequence the sheaves of groups
\begin{equation}\label{eq exact sequence}
e \to \mathcal{A}ut_{(2p+2)}\mathcal{O} \to \mathcal{A}ut_{(2p)}\mathcal{O} \to (\mathcal T_{\gr})_{2p}\to 0
\end{equation}
for any $p\geq 1$, see \cite{Rothstein}. More details about this sequence can be also found in \cite[Proposition 3.1]{COT}

\subsection{Order of a supermanifold}\label{sec Order of a supermanifold} Let again $\mathcal M=(M,\mathcal O)$ be a (non-split) supermanifold. According to Theorem \ref{Theor_Green} a supermanifold corresponds to an element $[\gamma]\in H^1(M,\mathcal{A}ut_{(2)}\gr\mathcal{O})/
\operatorname{Aut} \E$. Furthermore for any $p\geq 1$ we have the following natural embedding of sheaves
$$
\mathcal{A}ut_{(2p)}\mathcal{O} \hookrightarrow \mathcal{A}ut_{(2)} \mathcal{O},
$$
that induces the map of $1$-cohomology sets
$$
H^1(M,\mathcal{A}ut_{(2p)}\mathcal{O}) \to H^1(M, \mathcal{A}ut_{(2)} \mathcal{O}).
$$
(Note that our sheaves are not abelian.) Denote by $H_{2p}$ the image of $H^1(M,\mathcal{A}ut_{(2p)}\mathcal{O})$ in $H^1(M, \mathcal{A}ut_{(2)} \mathcal{O})$. We get the following $\operatorname{Aut} \E$-invariant filtration
\begin{align*}
H^1(M, \mathcal{A}ut_{(2)} \mathcal{O})= H_{2} \supset H_{4}  \supset H_{6}  \supset \cdots .
\end{align*}

Let $\gamma \in [\gamma]$ be any representative.
As in \cite{Rothstein} we define the order $o(\gamma)$ of the cohomology class $\gamma\in H^1(M, \mathcal{A}ut_{(2)} \mathcal{O})$ to be equal to the
maximal number between the numbers $2p$ such that $\gamma\in H_{2p}$. The order of the supermanifold $\mcM$ is by definition the order of the corresponding cohomology class $\gamma$. We put $o(\mcM):=\infty$, if $\mcM$ is a split supermanifold.

\subsection{The automorphism supergroup of a complex-analytic compact supermanifold}

Let us remind a description of a Lie supergroup in terms of a super-Harish-Chandra pair. A {\it Lie supergroup} $\mathcal G$ is a group object in the category of supermanifolds, see for example \cite{Vish_funk,V} for details. Any Lie supergroup can be described using a super-Harish-Chandra pair, see
\cite{Bern} and also \cite{BCC,V}, due to the following theorem, see \cite{V} for the complex-analytic case.

\begin{theorem}\label{theor Harish-Chandra}
	The category of complex Lie supergroups is
	equivalent to the category of complex super Harish-Chandra pairs. 	
\end{theorem}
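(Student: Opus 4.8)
The plan is to exhibit an explicit quasi-inverse pair of functors and to verify the equivalence on objects and on morphisms. In one direction, to a complex Lie supergroup $\mathcal{G}=(G_0,\mcO_{\mathcal{G}})$ I attach the pair $\Phi(\mathcal{G}):=(G_0,\g)$, where $G_0$ is the underlying complex Lie group and $\g=\Lie(\mathcal{G})$ is the Lie superalgebra of left-invariant vector fields, together with the $G_0$-action on $\g$ obtained by differentiating the conjugation morphisms $g\mapsto (h\mapsto ghg^{-1})$. One checks that $\g_{\bar 0}=\Lie(G_0)$, that the $G_0$-action is by Lie superalgebra automorphisms, and that its differential is the adjoint action of $\g_{\bar 0}$ on $\g$; so $\Phi(\mathcal{G})$ is a super Harish-Chandra pair, and $\Phi$ is visibly a functor.

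The substance is the reverse functor $\Psi$. Given a super Harish-Chandra pair $(G_0,\g)$, set
$$
\mcO_{\mathcal{G}}(U):=\operatorname{Hom}_{U(\g_{\bar 0})}\bigl(U(\g),\mathcal{F}_{G_0}(U)\bigr),
$$
where $\mathcal{F}_{G_0}$ is the sheaf of holomorphic functions on $G_0$, regarded as a left $U(\g_{\bar 0})$-module through left-invariant vector fields, and $U(\g)$ is the universal enveloping superalgebra. The supercommutative product on $\mcO_{\mathcal{G}}$ is induced by the (cocommutative) coproduct of the Hopf superalgebra $U(\g)$, and the unit by its counit. By the PBW theorem $U(\g)\cong U(\g_{\bar 0})\otimes\bigwedge\g_{\bar 1}$ as left $U(\g_{\bar 0})$-modules, so after choosing a basis of $\g_{\bar 1}$ one gets $\mcO_{\mathcal{G}}(U)\cong\mathcal{F}_{G_0}(U)\otimes\bigwedge\g_{\bar 1}^{*}$; this exhibits $(G_0,\mcO_{\mathcal{G}})$ as a supermanifold, with $\g_{\bar 1}$ accounting for the odd coordinates. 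The multiplication, unit and inversion morphisms are written down from the group operations of $G_0$ and the coproduct and antipode of $U(\g)$; the group axioms then reduce to Hopf-superalgebra identities, and the $G_0$-action on $\g$ is precisely what makes the multiplication a well-defined morphism of ringed spaces. This makes $\Psi$ a functor.

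It remains to produce natural isomorphisms $\Phi\circ\Psi\cong\id$ and $\Psi\circ\Phi\cong\id$. For the first, the reduced group of $\Psi(G_0,\g)$ is $G_0$ on the nose, and its Lie superalgebra is identified with $\g$ (left-invariant derivations of $\operatorname{Hom}_{U(\g_{\bar 0})}(U(\g),\mathcal{F}_{G_0})$ are precisely $\g$), with matching adjoint action. For the second, there is a canonical comparison morphism $\Psi(\Phi(\mathcal{G}))\to\mathcal{G}$: a local section $f$ of $\mcO_{\mathcal{G}}$ goes to the $U(\g_{\bar 0})$-linear map $X\mapsto(\widetilde{X}f)|_{G_0}$, where $\widetilde{X}$ is the left-invariant vector field attached to $X\in\g$. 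This is a morphism of sheaves of superalgebras intertwining the two comultiplications; that it is an isomorphism is a local statement which, via PBW together with the fact that both sides are locally free over $\mathcal{F}_{G_0}$ of rank $2^{\dim\g_{\bar 1}}$, reduces to the nondegeneracy of the pairing between $\mcO_{\mathcal{G}}$ and left-invariant differential operators along the identity section — essentially a Taylor expansion in the odd directions. Full faithfulness on morphisms then follows from the evaluation description: a morphism of Lie supergroups is determined by its reduced part together with its differential at the identity.

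The main obstacle — and the only place where the complex-analytic hypothesis really intervenes — is that, unlike in the smooth category, there are no partitions of unity, so one cannot argue that $\Psi(G_0,\g)$ is a genuine supermanifold or that the comparison morphism is an isomorphism by a gluing argument. The remedy is to stay strictly local and sheaf-theoretic: trivialize $\mcO_{\mathcal{G}}$ over coordinate charts of $G_0$ using PBW, check holomorphy of the multiplication in these coordinates directly, and verify that the comparison map is an isomorphism stalkwise. These verifications are carried out in \cite{V}; alternatively, they follow from the coordinate description of morphisms of supermanifolds recalled in the Preliminaries. Once $\Phi$ and $\Psi$ are established as mutually quasi-inverse functors, the asserted equivalence of categories is formal.
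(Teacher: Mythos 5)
The paper does not prove this theorem itself; it cites \cite{V} (and \cite{Bern,BCC}) for the complex-analytic case, and your sketch reproduces precisely the Koszul-type construction used in those references: the structure sheaf $\operatorname{Hom}_{U(\g_{\bar 0})}(U(\g),\mathcal F_{G_0})$ with convolution product coming from the coproduct of $U(\g)$, PBW to exhibit the result as a supermanifold locally isomorphic to $\mathcal F_{G_0}\otimes\bigwedge\g_{\bar 1}^*$, and the Taylor-expansion comparison map $f\mapsto\bigl(X\mapsto(\widetilde Xf)|_{G_0}\bigr)$ giving the natural isomorphism $\Psi\circ\Phi\cong\id$. Your argument is correct and follows the same route as the cited source, including the key observation that in the holomorphic category one must argue stalkwise via PBW rather than globally with partitions of unity.
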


 A {\it complex super Harish-Chandra pair} is a pair
$(G,\mathfrak{g})$ that consists of a complex-analytic Lie group $G$ and a Lie
superalgebra $\mathfrak{g}=\mathfrak{g}_{\bar
	0}\oplus\mathfrak{g}_{\bar 1}$ over $\mathbb C$, where $\mathfrak{g}_{\bar 0}=\Lie (G)$, endowed with a representation $\operatorname{Ad}: G\to \operatorname{Aut} \mathfrak{g}$ of $G$ in $\mathfrak{g}$ such that
\begin{itemize}
	\item $\operatorname{Ad}$ preserves the parity and induces the adjoint representation 	of $G$ in $\mathfrak{g}_{\bar 0}$,
	
	\item the differential $(\operatorname{d} \operatorname{Ad})_e$ at the identity $e\in G$ coincides with
	the adjoint representation $\operatorname{ad}$ of $\mathfrak g_{\bar 0}$ in $\mathfrak g$.
\end{itemize}

Super Harish-Chandra pairs form a category. (A definition of a morphism is natural, see in \cite{Bern} or in \cite{V}.)

A supermanifold $\mcM=(M,\mcO)$ is called compact if its base space $M$ is compact. If $\mcM$ is a compact complex-analytic supermanifold, the Lie superalgebra of vector fields $\mathfrak o(\mcM)$ is finite dimensional. For a compact complex-analytic supermanifold $\mcM$ we define the {\it automorphism supergroup} as the super-Harish-Chandra pair
\begin{equation}\label{eq def of automorphism supergroup}
(\operatorname{Aut} \mcM, \mathfrak o(\mcM)).
\end{equation}

\section{Super-Grass\-mannians and $\Pi$-symmetric super-Grassmannians}\label{sec charts on Gr}

\subsection{Complex-analytic super-Grass\-mannians and complex-analytic\\ $\Pi$-symmetric super-Grassmannians}\label{sec def of a supergrassmannian}
A super-Grassmannian $\Gr_{m|n,k|l}$ is the supermanifold  that parameterizes all $k|l$-dimen\-sional linear subsuperspaces in $\mathbb C^{m|n}$. Here $k\leq m$, $l\leq n$ and $k+l< m+n$. The underlying space of $\Gr_{m|n,k|l}$ is the product of two usual Grassmannians $\Gr_{m,k}\times \Gr_{n,l}$.
The structure of a supermanifold on $\Gr_{m|n,k|l}$ can be defined in the following way. Consider the following $(m+n)\times (k+l)$-matrix
$$
\mathcal L=\left(
\begin{array}{cc}
A & B\\
C&D\\
\end{array}
\right).
$$
Here $A=(a_{ij})$ is a $(m\times k)$-matrix, whose entries $a_{ij}$ can be regarded as (even) coordinates in the domain of all complex $(m\times k)$-matrices of rank $k$. Similarly $D=(d_{sr})$ is a $(n\times l)$-matrix, whose entries $d_{sr}$ can be regarded as (even) coordinates in the domain of all complex $(n\times l)$-matrices of rank $l$. Further, $B=(b_{pq})$ and $C=(c_{uv})$ are $(m\times l)$ and $(n\times k)$-matrices, respectively, whose entries $b_{pq}$ and $c_{uv}$ can be regarded as generators of a Grassmann algebra. The matrix $\mathcal L$ determines the following open subsuperdomain in $\mathbb C^{mk+nl|ml+nk}$
$$
\mathcal V =(V,\mathcal F_V\otimes \bigwedge (b_{pq},c_{uv})),
$$
where $V$ is the product  of  the domain of complex $(m\times k)$-matrices of rank $k$ and  the domain of complex $(n\times l)$-matrices of rank $l$, $\mathcal F_V$ is the sheaf of holomorphic functions on $V$ and  $\bigwedge (b_{pq},c_{uv})$ is the Grassmann algebra with generators $(b_{pq},c_{uv})$.
 Let us define an action $\mu:\mathcal V\times \GL_{k|l}(\mathbb C) \to \mathcal V$  of the Lie supergroup $\GL_{k|l}(\mathbb C)$ on $\mathcal V$ on the right in the natural way, that is by matrix  multiplication.
 The quotient space under this action is called the {\it super-Grassmannian} $\Gr_{m|n,k|l}$.
Now consider the case $m=n$. A {\it $\Pi$-symmetric super-Grassmannian} $\Pi\!\Gr_{n,k}$ is a subsupermanifold in   $\Gr_{n|n,k|k}$, which is invariant under odd involution $\Pi: \mathbb C^{n|n}\to \mathbb C^{n|n}$, see below.

Let us describe $\Gr_{m|n,k|l}$ and $\Pi\!\Gr_{n,k}$ using charts and local coordinates \cite{Manin}. First of all let as recall a construction of an atlas for the usual Grassmannian $\Gr_{m,k}$. Let $e_1,\ldots e_m$ be the standard basis in $\mathbb C^m$.  Consider a complex $(m\times k)$-matrix $C=(c_{ij})$, where $i=1,\ldots, m$ and $j=1,\ldots, k$, of rank $k$.
Such a matrix determines a $k$-dimensional subspace $W$ in $\mathbb C^m$ with basis $\sum\limits_{i=1}^mc_{i1}e_i,\ldots, \sum\limits_{i=1}^mc_{ik}e_i$.  Let $I\subset\{1,\ldots,m\}$ be a subset of cardinality $k$ such that the square submatrix  $L=(c_{ij})$, $i\in I$ and $j=1,\ldots, k$, of $C$ is non-degenerate. (There exists such a subset since $C$ is of rank $k$.) Then the matrix $C':= C\cdot L^{-1}$ determines the same subspace $W$ and contains the identity submatrix $E_k$ in the lines with numbers $i\in I$. Let $U_I$ denote the set of all $(m\times k)$-complex matrices $C'$ with the identity submatrix $E_k$ in the lines with numbers $i\in I$. Any point $x\in U_I$ determines a $k$-dimensional subspace $W_x$ in $\mathbb C^n$ as above, moreover if $x_1,x_2\in U_I$, $x_1\ne x_2$, then $W_{x_1}\ne W_{x_2}$. Therefore, the set $U_I$ is a subset in $\Gr_{m,k}$. We can verify that $U_I$ is open in a natural topology in $\Gr_{m,k}$ and it is homeomorphic to $\mathbb C^{(m-k)k}$. Therefore $U_I$ can be regarded as a chart on $\Gr_{m,k}$. Further any $k$-dimensional vector subspace in $\mathbb C^n$ is contained in some $U_J$ for a subset $J\subset\{1,\ldots,m\}$ of cardinality $|J|=k$. Hence the collection $\{U_I\}_{|I| =k}$ is an atlas on $\Gr_{m,k}$.

Now we are ready to describe an atlas $\mathcal A$ on $\Gr_{m|n,k|l}$. Let $I=(I_{\bar 0},I_{\bar 1})$ be a pair of sets, where
$$
I_{\bar 0}\subset\{1,\ldots,m\}\quad \text{and} \quad I_{\bar 1}\subset\{1,\ldots,n\},
$$
with $|I_{\bar 0}| = k,$ and $|I_{\bar 1}| = l$. As above to such an $I$ we can assign a chart $U_{I_{\bar 0}} \times U_{I_{\bar 1}}$ on $\Gr_{m,k}\times \Gr_{n,l}$. Let $\mathcal A = \{\mathcal U_{I}\}$ be a family of superdomains parametrized by $I=(I_{\bar 0},I_{\bar 1})$, where
$$
\mathcal U_I:= (U_{I_{\bar 0}}\times U_{I_{\bar 1}}, \mathcal F_{U_{I_{\bar 0}}\times U_{I_{\bar 1}}}\otimes \bigwedge ((m-k)l+ (n-l)k)).
$$
Here $\bigwedge (r)$ is a Grassmann algebra with $r$ generators and $\mathcal F_{U_{I_{\bar 0}}\times U_{I_{\bar 1}}}$ is the sheaf of holomorphic function on $U_{I_{\bar 0}}\times U_{I_{\bar 1}}$. Let us describe the superdomain $\mathcal U_I$ in a  different way. First of all assume for simplicity that $I_{\bar 0}=\{m-k+1,\ldots, m\}$,
$I_{\bar 1}=\{n-l+1,\ldots, n\}$. Consider the following matrix
$$
\mathcal Z_{I} =\left(
\begin{array}{cc}
X&\Xi\\
E_{k}&0\\
H&Y\\0&E_{l}\end{array} \right),
$$
where $E_{s}$ is the identity matrix of size $s$. We assume that the entries of $X=(x_{ij})$ and $Y=(y_{rs})$ are coordinates in the domain $U_{I_{\bar 0}}$ and the domain $U_{I_{\bar 1}}$, respectively. We also assume that the entries of $\Xi=(\xi_{ab})$ and of $H=(\eta_{cd})$ are generators of the Grassmann algebra $\bigwedge ((m-k)l+ (n-l)k)$. We see that  the matrix $\mathcal Z_I$ determines a superdomain
$$
\mathcal U_I:= (U_{I_{\bar 0}}\times U_{I_{\bar 1}}, \mathcal F_{U_{I_{\bar 0}}\times U_{I_{\bar 1}}}\otimes \bigwedge (\xi_{ab},\eta_{cd}))
$$
with even coordinates $x_{ij}$ and $y_{rs}$, and odd coordinates $\xi_{ab}$ and $\eta_{cd}$.

Let us describe $\mathcal U_I$ for any $I=(I_{\bar 0},I_{\bar 1})$. Consider the following $(m+n)\times (k+l)$-matrix
$$
\mathcal Z_{I} =\left(
\begin{array}{cc}
X'&\Xi'\\
H'&Y'\\
\end{array} \right).
$$
Here the blokes $X'$, $Y'$, $\Xi'$ and $H'$ are of size $m\times k$, $n\times l$, $m\times l$ and $n\times k$, respectively.
We assume that this matrix contains the identity submatrix in the lines with numbers $i\in I_{\bar 0}$ and $i\in \{m+j\,\, |\,\, j\in I_{\bar 1}\} $.  Further, non-trivial entries of $X'$ and $Y'$ can be regarded as coordinates in $U_{I_{\bar 0}}$ and $U_{I_{\bar 1}}$, respectively, and non-trivial entries of $\Xi'$ and $H'$ are identified with generators of the Grassmann algebra $\bigwedge ((m-k)l+ (n-l)k)$, see definition of $\mathcal U_I$. Summing up, we have obtained another description of $\mathcal U_I$.

The last step is to define the transition functions in $\mathcal U_I\cap \mathcal U_J$. To do this we need the matrices $\mathcal Z_I$ and $\mathcal Z_J$. We put $\mathcal Z_{J} =\mathcal Z_{I}C_{IJ}^{-1}$,
where $C_{IJ}$ is an invertible submatrix in $\mathcal Z_{I}$ that consists of the lines with numbers $i\in J_{\bar 0}$ and $m + i,$ where $i\in J_{\bar 1}$. This equation gives us a relation between coordinates of $\mathcal U_I$ and $\mathcal U_I$, in other words the transition functions in $\mathcal U_I\cap \mathcal U_J$. The supermanifold obtained by gluing these charts together is called the super-Grassmannian $\Gr_{m|n,k|l}$. The supermanifold $\Pi\!\Gr_{n,k}$ is defined as a subsupermanifold in $\Gr_{n|n,k|k}$ defined in $\mathcal Z_I$ by the equations $X'=Y'$ and $\Xi'=H'$. We can define the  $\Pi\!\Gr_{n,k}$ as all fixed points of an automorphism of $\Gr_{n|n,k|k}$ induced by an odd linear involution $\Pi:\mathbb C^{n|n}\to \mathbb C^{n|n}$, given by
$$
\left(
\begin{array}{cc}
0&E_n\\
E_n&0\\
\end{array} \right)
\left(
\begin{array}{c}
V\\
W\\
\end{array} \right) =
\left(
\begin{array}{c}
W\\
V\\
\end{array} \right),
$$
where $\left(
\begin{array}{c}
V\\
W\\
\end{array} \right)$ is the column of right coordinates of a vector in $\mathbb C^{n|n}$. In our charts $\Pi\!\Gr_{n,k}$ is defined by the following equation
$$
\left(
\begin{array}{cc}
0&E_n\\
E_n&0\\
\end{array} \right)
\left(
\begin{array}{cc}
X&\Xi\\
H&Y\\
\end{array} \right)
\left(
\begin{array}{cc}
0&E_k\\
E_k&0\\
\end{array} \right) = \left(
\begin{array}{cc}
X&\Xi\\
H&Y\\
\end{array} \right),
$$
or equivalently,
$$
X= Y,\quad H=\Xi.
$$

An atlas $\mathcal A^{\Pi}$ on  $\Pi\!\Gr_{n,k}$ contains local charts $\mathcal U_I^{\Pi}$ parameterized by $I\subset \{ 1,\ldots, n\}$ with $|I|=k$. The retract $\gr\Pi\!\Gr_{n,k}$ of  $\Pi\!\Gr_{n,k}$ is isomorphic to $(\Gr_{n,k}, \bigwedge \Omega)$, where  $\Omega$ is the sheaf of $1$-forms on $\Gr_{n,k}$.
More information about super-Grassmannians and $\Pi$-symmetric super-Grassmannians can be found in \cite{Manin}, see also \cite{COT,Vish_Pi sym}.

\subsection{$\Pi$-symmetric super-Grassmannians over $\mathbb R$ and $\mathbb H$}\label{symmetric super-Grassmannians over R and H}

 We will also consider $\Pi$-symmetric super-Grassmannians $\Pi\!\Gr_{n,k}(\mathbb R)$ and $\Pi\!\Gr_{n,k}(\mathbb H)$ over $\mathbb R$ and $\mathbb H$. These supermanifolds are defined in a similar way as $\Pi\!\Gr_{n,k}$ assuming that all coordinates are real or quaternion. In more details, to define $\Pi\!\Gr_{n,k}(\mathbb R)$ we just repeat the construction of local charts and transition functions above assuming that we work over $\mathbb R$. The case of $\Pi\!\Gr_{n,k}(\mathbb H)$ is slightly more complicated. Indeed, we consider charts $\mathcal Z_I$ as above with even and odd coordinates $X=(x_{ij})$ and $\Xi= (\xi_{ij})$, respectively, where by definition
 $$
 x_{ij}:=
 \left(\begin{array}{cc}
 x^{ij}_{11}&  x^{ij}_{12}\\
 -\bar x^{ij}_{12}&  \bar x^{ij}_{11}
 \end{array}
 \right),\quad \xi_{ij}:=\left(\begin{array}{cc}
 \xi_{11}^{ij}&  \xi^{ij}_{12}\\
 -\bar \xi^{ij}_{12}&  \bar \xi^{ij}_{11}
 \end{array}
 \right).
 $$
 Here $x^{ij}_{ab}$ are even complex variables and $\bar x^{ij}_{ab}$ is the complex conjugation of $x^{ij}_{ab}$.  Further, any $\xi_{ab}^{ij}$ is an odd complex variable and $\bar\xi_{ab}^{ij}$ is its complex conjugation. (Recall that a complex conjugation of a complex odd variable $\eta=\eta_1+i\eta_2$  is $\bar \eta :=\eta_1-i\eta_2$, where $\eta_i$ is a real odd variable.) To obtain $\Pi\!\Gr_{n,k}(\mathbb H)$ we repeat step by step the construction above.

\subsection{The order of a $\Pi$-symmetric super-Grassmannian}

We start this subsection with the following theorem proved in \cite[Theorem 5.1]{COT}.

\begin{theorem}\label{theor PiGr is splitt iff}
	A $\Pi$-symmetric super-Grassmannian $\Pi\!\Gr_{n,k}$ is split if and only if $(n,k) = (2,1)$.
\end{theorem}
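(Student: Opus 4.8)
The plan is to combine Green's classification (Theorem~\ref{Theor_Green}) with the obstruction calculus carried by the filtration \eqref{filtr_Aut_O} and the exact sequence \eqref{eq exact sequence}. The ``if'' direction is immediate: from the chart description of Section~\ref{sec charts on Gr}, $\Pi\!\Gr_{n,k}$ has dimension $(n-k)k\,|\,(n-k)k$, so $\Pi\!\Gr_{2,1}$ has dimension $1|1$, and for a supermanifold of odd dimension $1$ one has $\mathcal J^2=0$; hence $\mathcal{A}ut_{(2)}\gr\mathcal O$ is the trivial sheaf, $H^1(\Gr_{2,1},\mathcal{A}ut_{(2)}\gr\mathcal O)=\{\epsilon\}$, and by Theorem~\ref{Theor_Green} the supermanifold $\Pi\!\Gr_{2,1}$ is isomorphic to its (split) retract.

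For the ``only if'' direction, assume $(n,k)\neq(2,1)$; we must show that the class $\gamma\in H^1(\Gr_{n,k},\mathcal{A}ut_{(2)}\gr\mathcal O)$ attached to $\Pi\!\Gr_{n,k}$ by Theorem~\ref{Theor_Green} does not lie in the $\operatorname{Aut}\E$-orbit of $\epsilon$. Recall $\gr\Pi\!\Gr_{n,k}\cong(\Gr_{n,k},\bigwedge\Omega)$ with $\Omega=\Omega^1_{\Gr_{n,k}}$, so $\mathcal E=\Omega$ and $\mathcal E^*=\mathcal T:=\mathcal T_{\Gr_{n,k}}$; describing a $\mathbb Z$-homogeneous derivation of $\bigwedge\Omega$ of degree $p\ge0$ by its values on generators yields $(\mathcal T_{\gr})_p\cong(\mathcal T\otimes\bigwedge\nolimits^{p}\Omega)\oplus\mathcal{H}om(\Omega,\bigwedge\nolimits^{p+1}\Omega)$, in particular
$$
(\mathcal T_{\gr})_2\;\cong\;\Bigl(\mathcal T\otimes\bigwedge\nolimits^{2}\Omega\Bigr)\oplus\mathcal{H}om\Bigl(\Omega,\ \bigwedge\nolimits^{3}\Omega\Bigr)\;\cong\;(\mathcal T\otimes\Omega^{2})\oplus(\mathcal T\otimes\Omega^{3}),
$$
writing $\Omega^{q}:=\bigwedge\nolimits^{q}\Omega$. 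Taking $p=1$ in \eqref{eq exact sequence} for the split sheaf $\gr\mathcal O$ gives a surjection $q\colon\mathcal{A}ut_{(2)}\gr\mathcal O\to(\mathcal T_{\gr})_2$, hence an $\operatorname{Aut}\E$-equivariant map of pointed sets $q_*\colon H^1(\Gr_{n,k},\mathcal{A}ut_{(2)}\gr\mathcal O)\to H^1(\Gr_{n,k},(\mathcal T_{\gr})_2)$ with $q_*(\epsilon)=0$, the $\operatorname{Aut}\E$-action on the target being linear (the homotheties $\phi_\alpha$ acting by $\alpha^{2}$). Hence, if the \emph{primary obstruction} $q_*(\gamma)$ is non-zero, its $\operatorname{Aut}\E$-orbit avoids $0$, so $\gamma$ avoids the orbit of $\epsilon$ and $\Pi\!\Gr_{n,k}$ is non-split; it therefore suffices to prove $q_*(\gamma)\neq0$.

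To produce $q_*(\gamma)$ explicitly I would use the gluing of Section~\ref{sec charts on Gr}: for two charts of the atlas $\mathcal A^{\Pi}$ the transition is $\mathcal Z_J=\mathcal Z_I C_{IJ}^{-1}$, where $C_{IJ}$ is the square submatrix of $\mathcal Z_I$ on the rows indexed by $J$ and $n+J$, a matrix whose entries are even on the block diagonal and odd off it; expanding $C_{IJ}^{-1}$ as a finite geometric series in the nilpotent off-diagonal part separates the transition functions by degree in the odd coordinates, the degree-$\le1$ part reproducing the retract and the degree-$2$ part, read modulo degree $\ge4$, giving a \v{C}ech $1$-cocycle $c$ with values in $(\mathcal T_{\gr})_2$ that represents $q_*(\gamma)$. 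To prove $[c]\neq0$ I would compute the cohomology of the two homogeneous bundles above on $\Gr_{n,k}=\GL_n/P$ via the Borel--Weil--Bott theorem (using $\bigwedge\nolimits^{q}\Omega\cong\bigwedge\nolimits^{d-q}\mathcal T\otimes K$, where $d=\dim\Gr_{n,k}$ and $K=\bigwedge\nolimits^{d}\Omega$); the hypothesis $(n,k)\neq(2,1)$ is precisely what keeps $\bigwedge\nolimits^{2}\Omega\neq0$ and forces the relevant $H^1$ to be non-zero, whereas for $(n,k)=(2,1)$ one has $\Omega=\mathcal O_{\mathbb P^1}(-2)$, so $(\mathcal T_{\gr})_2=0$ and no primary obstruction exists, in harmony with splitness. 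Alternatively one may restrict along a line $\mathbb P^1\hookrightarrow\Gr_{n,k}$ and reduce the non-vanishing of $[c]$ to a computation on $\mathbb P^1$.

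The main obstacle is this last point: one must organize the inversion of $C_{IJ}$ so as to isolate the genuinely quadratic contribution and check it is not absorbed by a degree-$2$ change of local frames (i.e.\ by the $\operatorname{Aut}\E$-action together with $\mathcal{A}ut_{(4)}$-corrections), and then carry out the Borel--Weil--Bott bookkeeping uniformly for all $(n,k)\neq(2,1)$. One should also be prepared for $q_*(\gamma)$ to vanish for some $(n,k)$, in which case one iterates the argument with \eqref{eq exact sequence} for $p\ge2$, computing the order $o(\Pi\!\Gr_{n,k})$ --- although the expectation is that the primary obstruction already suffices.
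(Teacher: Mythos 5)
The paper itself does not prove Theorem~\ref{theor PiGr is splitt iff}; it is stated as a citation of \cite[Theorem~5.1]{COT}, so there is no internal proof to compare your proposal against. Evaluated on its own merits: your ``if'' direction is correct and complete --- $\dim\PiG_{2,1}=1|1$ forces $\mcJ^2=0$, so $\mathcal{A}ut_{(2)}\gr\mcO$ is the trivial sheaf and Green's theorem yields splitness. Your reduction in the ``only if'' direction to the primary obstruction is also sound, and in fact cleaner than you state. Since $H^1(M,(\mathcal T_{\gr})_p)=0$ for $p\geq 3$ (the COT vanishing the paper uses in Proposition~\ref{prop o(PiGR)}), the induction along the filtration \eqref{filtr_Aut_O} gives $H^1(M,\mathcal{A}ut_{(2p)}\gr\mcO)=\{\epsilon\}$ for $p\geq 2$; hence the fibre of $q_*\colon H^1(M,\mathcal{A}ut_{(2)}\gr\mcO)\to H^1(M,(\mathcal T_{\gr})_2)$ over the basepoint is $\{\epsilon\}$, and $q_*(\gamma)=0$ already forces $\gamma=\epsilon$. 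Your fallback of ``iterating with $p\ge 2$'' therefore cannot arise: either the primary obstruction is nonzero or the supermanifold is split, with no intermediate case. Together with $(\mathcal T_{\gr})_2\simeq\bigwedge^2\Omega\otimes\Omega^*\oplus\bigwedge^3\Omega\otimes\Omega^*$ and $H^1(M,\bigwedge^3\Omega\otimes\Omega^*)=0$, the whole argument collapses to showing the image of $\gamma$ in $H^1(\Gr_{n,k},\bigwedge^2\Omega\otimes\Omega^*)$ is nonzero.

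That last step is the genuine gap, and you flag it yourself: you neither exhibit the \v Cech cocycle $c$ representing $q_*(\gamma)$ (obtained from the degree-$2$ part of $C_{IJ}^{-1}$ on chart overlaps), nor prove $[c]\neq 0$. Be careful here: a Borel--Weil--Bott computation showing $H^1(\Gr_{n,k},\bigwedge^2\Omega\otimes\Omega^*)\neq 0$ is necessary but not sufficient, since the particular class $[c]$ could, a priori, still be a coboundary. Proving $[c]\neq 0$ requires explicitly writing a representative and matching it against a nonzero Dolbeault class; this is precisely the technical content of \cite{COT} (the $(2;1)$-form $\eta\neq 0$ of \cite[Formula~(4.19), Theorem~5.2]{COT}). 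As written, your proposal is a correct strategy with the essential cohomological computation missing.
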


From  \cite[Theorem 4.4]{COT} it follows that for the $\Pi$-symmetric super-Grassmannian $\mcM= \Pi\!\Gr_{n,k}$ we have
$
H^1(M, (\mathcal T_{\gr})_{p})=\{0\},$ $p\geq 3.$
This implies the following statement.

\begin{proposition}\label{prop o(PiGR)}
	A $\Pi$-symmetric super-Grassmannian $\Pi\!\Gr_{n,k}$ is a supermanifold of   order $2$ for $(n,k)\ne (2,1)$. The order of $\Pi\!\Gr_{2,1}$ is $\infty$, since this supermanifold is split.
\end{proposition}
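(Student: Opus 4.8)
The plan is to treat the two parts separately. If $(n,k)=(2,1)$ there is nothing to prove: Theorem \ref{theor PiGr is splitt iff} says $\PiG_{2,1}$ is split, and by convention $o(\mcM):=\infty$ for split $\mcM$. So assume $(n,k)\ne(2,1)$, so that $\mcM=\PiG_{n,k}$ is non-split, again by Theorem \ref{theor PiGr is splitt iff}; let $\gamma\in H^1(M,\mathcal{A}ut_{(2)}\mathcal O)$ be the cohomology class corresponding to $\mcM$ via Green's Theorem \ref{Theor_Green}, so that non-splitness means exactly $\gamma\ne\epsilon$. Since $H_2=H^1(M,\mathcal{A}ut_{(2)}\mathcal O)$ by definition, we have $o(\mcM)\ge 2$ automatically, and since the order is one of the numbers $2,4,6,\dots$, it remains only to show $\gamma\notin H_4$.

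I would in fact prove the stronger statement $H_4=\{\epsilon\}$, by combining the exact sequence \eqref{eq exact sequence} with the vanishing $H^1(M,(\mathcal T_{\gr})_p)=\{0\}$ for $p\ge 3$ quoted above from \cite[Theorem 4.4]{COT}. For each $p\ge 2$, the exact sequence of sheaves of groups $e\to\mathcal{A}ut_{(2p+2)}\mathcal O\to\mathcal{A}ut_{(2p)}\mathcal O\to(\mathcal T_{\gr})_{2p}\to 0$ induces an exact sequence of pointed sets in cohomology
$$
H^1(M,\mathcal{A}ut_{(2p+2)}\mathcal O)\longrightarrow H^1(M,\mathcal{A}ut_{(2p)}\mathcal O)\longrightarrow H^1\big(M,(\mathcal T_{\gr})_{2p}\big);
$$
since $2p\ge 4\ge 3$ the right-hand set is a single point, so the left arrow is surjective (its image, being the preimage of the distinguished point, is everything). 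Composing with the natural map to $H^1(M,\mathcal{A}ut_{(2)}\mathcal O)$, this surjectivity yields $H_{2p+2}=H_{2p}$ for every $p\ge 2$, so the filtration stabilizes: $H_4=H_6=H_8=\cdots$. On the other hand $\mcJ$ is nilpotent — $\mcJ^{N}=0$ for $N$ larger than the odd dimension of $\mcM$ — so $\mathcal{A}ut_{(2p)}\mathcal O$ is the trivial sheaf, and hence $H_{2p}=\{\epsilon\}$, as soon as $2p$ exceeds the odd dimension. The two facts together force $H_4=\{\epsilon\}$, and since $\gamma\ne\epsilon$ we conclude $o(\mcM)=2$.

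The step that needs care — more bookkeeping than a genuine obstacle — is the use of non-abelian cohomology: \eqref{eq exact sequence} is a short exact sequence of sheaves of (generally non-commutative) groups, so one has only exactness of pointed sets, and the surjectivity invoked above must be deduced from the target being a singleton rather than from any additive diagram chase. There is no analytic difficulty: the only substantial input, the vanishing $H^1(M,(\mathcal T_{\gr})_p)=0$ for $p\ge 3$, is imported ready-made from \cite{COT}. When writing out the argument I would also spell out the two elementary facts used tacitly above — that $H_2$ equals all of $H^1(M,\mathcal{A}ut_{(2)}\mathcal O)$ and that the filtration $\{\mathcal{A}ut_{(2p)}\mathcal O\}_p$ terminates because $\mcJ^{N}=0$ for $N$ large — both immediate from the definitions recalled in Subsections \ref{sec A classification theorem} and \ref{sec Order of a supermanifold}.
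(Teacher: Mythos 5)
Your proof is correct and follows the paper's own argument: both proofs combine the exact sequence \eqref{eq exact sequence} with the vanishing $H^1(M,(\mathcal T_{\gr})_p)=\{0\}$ for $p\ge 3$ from \cite[Theorem 4.4]{COT} and the fact that $\mathcal{A}ut_{(2p)}\mathcal O$ is eventually trivial, then invoke non-splitness from Theorem \ref{theor PiGr is splitt iff} to conclude $\gamma\in H_2\setminus H_4$. The only difference is presentational: you track the stabilization of the images $H_{2p}$, whereas the paper runs the descending induction directly on the cohomology sets $H^1(M,\mathcal{A}ut_{(2q)}\mathcal O)$ to show each is $\{\epsilon\}$ for $q\ge 2$.
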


\begin{proof}
	To show the statement consider the exact sequence (\ref{eq exact sequence}) for $\mcM = \Pi\!\Gr_{n,k}$ and the corresponding exact sequence of cohomology sets
	\begin{align*}
\to	H^1(M,\mathcal{A}ut_{(2p+2)}\mathcal{O} )\to H^1(M,\mathcal{A}ut_{(2p)}\mathcal{O}) \to H^1(M, (\mathcal T_{\gr})_{2p}) \to .
	\end{align*}
	Since $H^1(M, (\mathcal T_{\gr})_{p})=\{0\}$ for $p\geq 3$, see \cite[Theorem 4.4]{COT}, and $\mathcal{A}ut_{(2q)}\mathcal{O} = \id$ for sufficiently large $p$, we have by induction
	$H^1(M, \mathcal{A}ut_{(2q)}\mathcal{O}) =\{\epsilon\}, \,\,\, q\geq 2.$
Therefore $H_{2p}=\{ \epsilon\}$ for $p\geq 2$. Since by Theorem \ref{theor PiGr is splitt iff}, the $\Pi$-symmetric super-Grassmannian $\Pi\!\Gr_{n,k}$ is not split for $(n,k)\ne (2,1)$, the corresponding to $\Pi\!\Gr_{n,k}$, where $(n,k)\ne (2,1)$, cohomology class $\gamma$ is not trivial.  Therefore, $\gamma\in H_2\setminus H_4 = H_2\setminus \{\epsilon\} $.
	This completes the proof.
\end{proof}

\section{Lifting of homotheties on a non-split supermanifold}

\subsection{Lifting of an automorphism in terms of Green's cohomology}
On any vector bundle $\E$ over $M$ we can define a natural automorphism $\phi_{\alpha}$, where $\alpha\in \mathbb C^*=\mathbb C\setminus \{0\}$. In more details, $\phi_{\alpha}$ multiplies any local section by the complex number $\al$. 	Let $r$ be the minimum between positive integers $k$  such that $\alpha^k=1$. The number $r$ is called the {\it order} $\textsf{ord}(\phi_{\al})$ of the automorphism $\phi_{\alpha}$. If such a number does not exist we put $\textsf{ord}(\phi_{\alpha}) = \infty$. In this section we study a possibility of lifting of $\phi_{\alpha}$ on a non-split supermanifold corresponding to $\E$.

A possibility of lifting of an automorphism (or an action of a Lie group) to a non-split supermanifold was studied in \cite{Oni_lifting}, see also \cite[Proposition 3.1]{Bunegina} for a proof of a particular case. In particular the following result was obtained there. Denote by $\underline{\operatorname{Aut}} \E$ the group of automorphisms of $\E$, which are not necessary identical on $M$. Clearly, we have $\operatorname{Aut}\E\subset \underline{\operatorname{Aut}} \E$.

\begin{proposition}\label{prop lift of gamma}
	Let $\gamma\in H^1(M,\mathcal{A}ut_{(2)}\gr\mathcal{O})$ be a Green cohomology class of $\mathcal M$. Then ${\sf B}\in \underline{\operatorname{Aut}} \E$ lifts to $\mathcal M$ if and only if for the induced map in the cohomology group we have ${\sf B}(\gamma)=\gamma$.
\end{proposition}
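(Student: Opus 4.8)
The plan is to analyze the lifting problem through the description of $\mathcal M$ as a gluing cocycle and to keep track of how the automorphism $\mathsf B\in\operatorname{Aut}\E$ acts on that cocycle. Fix an open cover $\{U_i\}$ of $M$ trivializing $\E$, and let the non-split structure sheaf $\mcO$ of $\mathcal M$ be obtained from $\bigwedge\mcE$ by regluing the split charts $(U_i,\bigwedge\mcE|_{U_i})$ via a $1$-cocycle $(\gamma_{ij})$ with $\gamma_{ij}\in \mathcal{A}ut_{(2)}\gr\mcO(U_i\cap U_j)$; by Theorem \ref{Theor_Green} the class $[\gamma]\in H^1(M,\mathcal{A}ut_{(2)}\gr\mcO)/\operatorname{Aut}\E$ determines $\mathcal M$ up to isomorphism, and refining the cover does not change the class. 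The automorphism $\mathsf B$ induces, via the functor $\gr$ and the action $Int$, an automorphism $\wedge\mathsf B$ of $\gr\mcO$ preserving the filtration, hence a new cocycle $\mathsf B\cdot\gamma:=(\,\wedge\mathsf B\circ\gamma_{ij}\circ\wedge\mathsf B^{-1}\,)$ whose class is exactly $\mathsf B(\gamma)\in H^1(M,\mathcal{A}ut_{(2)}\gr\mcO)$.

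The key step is to establish the equivalence in both directions. For the ``only if'' direction: suppose $\widetilde{\mathsf B}=(\mathsf B,\widetilde{\mathsf B}^*)\in\operatorname{Aut}\mathcal M$ lifts $\mathsf B$, i.e. $\gr(\widetilde{\mathsf B})$ is the automorphism of $\gr\mcM$ induced by $\mathsf B$. Restricting $\widetilde{\mathsf B}^*$ to the charts gives local isomorphisms from the $i$-th reglued chart to the $\mathsf B_0$-translated $j$-th reglued chart; comparing these on overlaps produces a $0$-cochain $(h_i)$, with $h_i\in\mathcal{A}ut_{(2)}\gr\mcO(U_i)$ after using that $\gr(\widetilde{\mathsf B})$ comes from $\mathsf B$, such that $\wedge\mathsf B\circ\gamma_{ij}\circ\wedge\mathsf B^{-1}=h_i^{-1}\circ\gamma_{ij}\circ h_j$. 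This is precisely the statement that $\mathsf B\cdot\gamma$ and $\gamma$ are cohomologous, i.e. $\mathsf B(\gamma)=\gamma$ in $H^1(M,\mathcal{A}ut_{(2)}\gr\mcO)$. For the ``if'' direction one runs the argument backwards: given a $0$-cochain $(h_i)$ realizing the cohomology $\mathsf B\cdot\gamma\sim\gamma$, one assembles the local pieces $\wedge\mathsf B$ (conjugated appropriately by $h_i$) into a well-defined global sheaf isomorphism $\widetilde{\mathsf B}^*:\mcO\to(\mathsf B_0)_*\mcO$, the cocycle condition guaranteeing compatibility on triple overlaps; the resulting pair $(\mathsf B_0,\widetilde{\mathsf B}^*)$ is an automorphism of $\mathcal M$ lifting $\mathsf B$.

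I expect the main obstacle to be purely bookkeeping rather than conceptual: making precise the identification of the induced action of $\mathsf B$ on the Green cocycle with the abstract $\operatorname{Aut}\E$-action $Int$ on $H^1(M,\mathcal{A}ut_{(2)}\gr\mcO)$, and checking that a lift ``inducing $\mathsf B$ on the retract'' is the same as a lift whose effect on the reglued charts differs from $\wedge\mathsf B$ by an element of $\mathcal{A}ut_{(2)}\gr\mcO$ (this uses that $\mathcal{A}ut_{(2)}$ is exactly the kernel of $\gr$ on $\mathcal{A}ut\mcO$). Once these identifications are in place, the two implications are the standard non-abelian \v{C}ech argument: a cocycle is trivial after twisting iff there is an equivariant trivialization, and ``$\mathsf B$ fixes the class'' is literally the statement that the twisted cocycle is cohomologous to the original. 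Since the result is already in the literature (\cite{Oni_lifting}, and \cite[Proposition 3.1]{Bunegina}), the proof here can be kept to a brief indication of this cocycle-theoretic argument.
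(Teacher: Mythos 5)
The paper does not give a proof of Proposition \ref{prop lift of gamma}: it cites the result from \cite{Oni_lifting} and \cite[Proposition 3.1]{Bunegina} and then immediately applies it in the case $\mathsf B=\phi_\alpha$, so there is no in-paper proof to compare against. Your sketch reconstructs the standard non-abelian \v{C}ech argument that underlies those references and is correct in outline: the ``only if'' direction (a lift of $\mathsf B$ yields a $0$-cochain $(h_i)$ in $\mathcal{A}ut_{(2)}\gr\mathcal{O}$ exhibiting $\mathsf B(\gamma)$ and $\gamma$ as cohomologous), and the ``if'' direction (such a coboundary is precisely what is needed to glue the local copies of $\wedge\mathsf B$ into a global automorphism of $\mathcal M$), both go through; the use of $\mathcal{A}ut_{(2)}=\ker(\gr)$ to characterize lifts is the right observation. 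The one point to make precise — and you flag it yourself as bookkeeping — is the case where $\mathsf B$ covers a nontrivial automorphism $\mathsf B_0$ of $M$: then $\wedge\mathsf B$ is an isomorphism $\gr\mathcal{O}\to(\mathsf B_0)_*\gr\mathcal{O}$ rather than a sheaf endomorphism, so an identity of the form $\wedge\mathsf B\circ\gamma_{ij}\circ\wedge\mathsf B^{-1}=h_i^{-1}\circ\gamma_{ij}\circ h_j$ only typechecks after transporting the cover by $\mathsf B_0$ (replacing the indices on one side by those of the $\mathsf B_0$-translated cover, or passing to a common refinement). This is exactly the content built into the definition of the $\operatorname{Aut}\E$-action $Int$ on $H^1$, so it is a matter of care rather than a gap.
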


Consider the case ${\sf B}= \phi_{\al}$ in details. Let us choose an acyclic covering $\mathcal U = \{U_{a}\}_{a\in I}$ of $M$. Then by the Leray theorem, we have an isomorphism $H^1(M,\mathcal{A}ut_{(2)}\gr\mathcal{O}) \simeq H^1(\mathcal U,\mathcal{A}ut_{(2)}\gr\mathcal{O})$, where $H^1(\mathcal U,\mathcal{A}ut_{(2)}\gr\mathcal{O})$ is the \u{C}ech 1-cohomology set corresponding to $\mathcal U$. Let $(\gamma_{ab})$ be a \u{C}ech cocycle representing $\gamma$ with respect to this isomorphism. Then
\begin{align*}
\gamma = \phi_{\al}(\gamma) \,\,\, \Longleftrightarrow \,\,\, \gamma_{ab}=  u_{a} \circ \phi_{\al}(\gamma_{ab}) \circ u_{b}^{-1} =
 u_{a} \circ\phi_{\al} \circ \gamma_{ab} \circ \phi^{-1}_{\al} \circ u_{b}^{-1},
\end{align*}
where $u_{c}\in \mathcal{A}ut_{(2)}\gr\mathcal{O} (U_c)$. In Theorem \ref{theor main} we will show that we always can find a \v{C}ech cocycle $(\gamma_{ab})$ representing the cohomology class $\gamma$ such that
\begin{equation}\label{eq cocycle exact form}
\gamma_{ab}=  \phi_{\al}(\gamma_{ab})  =
\phi_{\al} \circ \gamma_{ab} \circ \phi^{-1}_{\al}.
\end{equation}

 \subsection{Natural gradings in a superdomain}\label{sec aut theta}

 Let us consider a superdomain $\mathcal U:= (U, \mcO)$, where $\mcO= \mathcal F\otimes \bigwedge(\xi_1,\ldots \xi_m)$ and $\mathcal F$ is the sheaf of holomorphic functions on $U$, with local coordinates $(x_a, \xi_b)$. For any $\al\in \mathbb C^*$ we define an automorphism $\theta_{\al}: \mcO\to \mcO$ of order $r= \textsf{ord}(\theta_{\al})$ given by $\theta_{\al} (x_a) =x_a$ and $\theta_{\al} (\xi_b) = \al\xi_b $.  Clearly $\theta_{\al}$ defines the following $\mathbb Z_{r}$-grading (or $\Z$-grading if $r=\infty$) in $\mcO$:
 \begin{equation}\label{eq decomposition al}
 \mcO= \bigoplus_{\tilde k\in \mathbb Z_{r}} \mcO^{\tilde k}, \quad \text{where}\quad
 \mcO^{\tilde k} = \{f\in \mcO \,\,|\,\, \theta_{\al}(f)  = \al^{\tilde   k} f \}.
 \end{equation}

 If $r=2$, the decomposition (\ref{eq decomposition al}) coincides with the standard decomposition of $\mcO=\mcO_{\bar 0}\oplus \mcO_{\bar 1}$ into even and odd parts
 $$
 \mcO_{\bar 0} =  \mcO^{\tilde 0}, \quad \mcO_{\bar 1} =  \mcO^{\tilde 1}.
 $$

 \subsection{Lifting of an automorphism $\phi_{\al}$,  local picture}\label{sec Automorphism psi_al}

  Let $\E$ be a vector bundle, $\mcE$ be the sheaf of section of $\E$, $(M,\bigwedge\mcE)$ be the corresponding split supermanifold, and $\mcM=(M,\mcO)$ be a (non-split) supermanifold with the retract $\gr\mcM\simeq (M,\bigwedge\mcE)$. Recall that the automorphism  $\phi_{\alpha}$ of $\E$ multiplies any local section of $\E$ by the complex number $\al$.  We say that $\psi_{\al}\in H^0(M, \mathcal{A}ut\mathcal{O})$ is a {\it lift} of $\phi_{\al}$ if $\gr(id,\psi_{\al})= (id,\wedge\phi_{\al})$.

 Let $\mathcal B=\{\mathcal V_{a}\}$ be any atlas on $\mcM$ and let $\mathcal V_{a}\in \mathcal B$ be a chart with even and odd coordinates $(x_i,\xi_j)$, respectively. In any such $\mathcal V_{a}\in \mathcal B$ we can define an automorphism $\theta_{\al}^a = \theta_{\al}^a (\mathcal V_{a})$ as in Section \ref{sec aut theta} depending on $\mathcal V_{a}$. This is $\theta^a_{\al}(x_i)=x_i$ and $\theta^a_{\al}(\xi_j)=\xi_j$.

 \begin{proposition}\label{prop new coordinates}
 Let $\psi_{\alpha}$ be a lift of the automorphism $\phi_{\alpha}$  	of order $r= \textsf{ord}(\phi_{\alpha})$.
 \begin{enumerate}
 	\item  If $r$ is even, then there exists an atlas $\mathcal A=\{\mathcal U_{a}\}$ on $\mcM$ with local coordinates $(x^{a}_i,\xi^{a}_j)$ in $\mathcal U_{a}=(U_{a}, \mcO|_{U_a})$  such that
 	$$
 	 \theta_{\al}^a(\psi_{\alpha}	(x_i^{a}))  =  \psi_{\alpha}	(x_i^{a}), \quad   \theta_{\al}^a (\psi_{\alpha} (\xi_k^{a})) = \alpha \psi_{\alpha} (\xi_k^{a}).
 	$$

 	\item  If $r>1$ is odd or if $r=\infty$, then there exists an atlas $\mathcal A=\{\mathcal U_{a}\}$ on $\mcM$ with local coordinates $(x^{a}_i,\xi^{a}_j)$ in $\mathcal U_{a}=(U_{a}, \mcO|_{U_a})$  such that
 	$$
 	\psi_{\alpha}	(x_i^{a}) = x_i^{a} ,\quad  \psi_{\alpha} (\xi_j^{a}) = \al \xi_j^{a}.
 	$$
 \end{enumerate}
 	
 \end{proposition}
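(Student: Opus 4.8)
The plan is to analyze how the lift $\psi_\alpha$ acts on an arbitrary chart and then correct the coordinates chart-by-chart so that $\psi_\alpha$ becomes diagonal (in case 2) or block-diagonal with respect to the $\theta^a_\alpha$-grading (in case 1). First I would fix a chart $\mathcal V_a$ with coordinates $(x_i, \xi_j)$ and record the basic compatibility: since $\psi_\alpha$ is a lift of $\phi_\alpha$, we have $\gr(\id,\psi_\alpha) = (\id, \wedge\phi_\alpha)$, which means $\psi_\alpha(x_i) \equiv x_i \bmod \mcJ^2$ and $\psi_\alpha(\xi_j) \equiv \alpha\xi_j \bmod \mcJ^3$. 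In particular $\psi_\alpha$ preserves the even/odd decomposition and acts on each homogeneous component of $\mcO$ as $\phi_\alpha^{\wedge}$ plus higher-order corrections. Because $\phi_\alpha$ has order $r$, the operator $\psi_\alpha$ has order $r$ as well (its reduction mod $\mcJ^2$ already has order exactly $r$ on the odd generators, and $\psi_\alpha$ being finite order follows because $\mcO$ is locally a finite-dimensional module over $\mcO_{\bar 0}$-jets... more carefully: the filtration by $\mcJ^p$ is finite, $\mathcal Aut_{(2p)}\mcO$ is trivial for large $p$, so $\psi_\alpha^r$ lies in $\mathcal Aut_{(2)}\mcO$ and by a standard descending induction along the filtration \eqref{filtr_Aut_O} one shows it can be conjugated to the identity — or simply that $\langle\psi_\alpha\rangle$ is a finite cyclic group when $r<\infty$).

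The key step is a linearization/averaging argument. When $r < \infty$, the cyclic group $\langle\psi_\alpha\rangle$ acts on $\mcO|_{U_a}$, and I want to produce new coordinates on which this action is the standard $\theta_\alpha$. For the odd coordinates: set $\tilde\xi_j := \frac{1}{r}\sum_{t=0}^{r-1}\alpha^{-t}\psi_\alpha^t(\xi_j)$; this averaged coordinate satisfies $\psi_\alpha(\tilde\xi_j) = \alpha\tilde\xi_j$ exactly, and $\tilde\xi_j \equiv \xi_j \bmod \mcJ^3$ so the $\tilde\xi_j$ still form a system of odd coordinates. For the even coordinates in case 2 ($r$ odd or $r=\infty$): one checks that $\psi_\alpha(x_i)$ lies in the $\theta_\alpha$-eigenspace $\mcO^{\tilde 0} = \mcO_{\bar 0}^{+}$, i.e. the part of $\mcO_{\bar 0}$ of degree $\equiv 0 \bmod r$ — here is where oddness of $r$ matters, since $\mcJ^2 \cap \mcO_{\bar 0}$ is spanned by products of an even number of odd generators, hence by monomials of even degree $2s$, and when $r$ is odd the only way $\alpha^{2s}$-eigenvectors arising from $\psi_\alpha(x_i)$ can be consistently removed is... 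Actually the cleaner route: when $r$ is odd, $\alpha$ generates the same cyclic group as $\alpha^2$, so the $\theta_\alpha$-grading on $\mcO_{\bar 0}$ has all even-degree pieces landing in nonzero graded components, and averaging $\tilde x_i := \frac1r\sum_{t}\psi_\alpha^t(x_i)$ gives $\psi_\alpha(\tilde x_i) = \tilde x_i$; since the correction $\tilde x_i - x_i$ lies in $\mcJ^2$ the $\tilde x_i$ remain even coordinates. When $r = \infty$ one cannot average, so instead I would use the filtration \eqref{filtr_Aut_O} and Proposition \ref{prop new coordinates}(1)-type normal forms iteratively: kill the degree-$2p$ correction term of $\psi_\alpha(x_i)$ by a change of coordinates in $\mathcal Aut_{(2p)}\mcO$, using that for $r=\infty$ every monomial of positive even degree sits in a distinct nonzero $\Z$-graded component, so the cohomological obstruction to diagonalizing vanishes term by term; this terminates because $\mcJ^{m+1}=0$.

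For case 1 ($r$ even), the even coordinates cannot in general be made $\psi_\alpha$-invariant on the nose — e.g. a degree-$r$ monomial in the $\xi$'s has $\theta_\alpha$-weight $\alpha^r = 1$ but may genuinely appear in $\psi_\alpha(x_i)$ with no room to remove it — so I would only claim the weaker block-form $\theta^a_\alpha(\psi_\alpha(x_i^a)) = \psi_\alpha(x_i^a)$, i.e. that $\psi_\alpha(x_i^a) \in \mcO^{\tilde 0}$ and $\psi_\alpha(\xi_k^a) \in \alpha\cdot\mcO^{\tilde 1}$. This again follows by the averaging $\tilde x_i := \frac1r\sum_t \psi_\alpha^t(x_i)$, $\tilde\xi_j := \frac1r\sum_t\alpha^{-t}\psi_\alpha^t(\xi_j)$: the first lands in the $\theta_\alpha$-eigenspace with eigenvalue $1$ because $\psi_\alpha$ and $\theta_\alpha$ agree modulo $\mcJ^2$ and the averaged element is a genuine $\psi_\alpha$-fixed point whose class mod $\mcJ^2$ is the $\theta_\alpha$-weight-$0$ element $x_i$; one then checks $\psi_\alpha(\tilde x_i) = \tilde x_i$ forces, together with the finite order, that $\tilde x_i$ is homogeneous of $\theta_\alpha$-weight $0$ — here I'd use that on a finite-order automorphism the fixed subspace decomposes compatibly with the $\mcJ$-filtration and that the associated graded action of $\psi_\alpha$ is exactly $\wedge\phi_\alpha = \theta_\alpha$. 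The main obstacle I anticipate is precisely this last point in case 1 and case 2 with $r=\infty$: controlling the higher-order correction terms of $\psi_\alpha(x_i)$ and showing the averaged (resp. inductively corrected) coordinate is not merely $\psi_\alpha$-invariant but genuinely $\theta_\alpha$-homogeneous — this requires carefully comparing the two commuting-up-to-higher-order operators $\psi_\alpha$ and $\theta^a_\alpha$ along the filtration, rather than any single slick averaging identity.
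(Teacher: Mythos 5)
Your averaging strategy is a genuinely different route from the paper's, which proceeds by an explicit iterative coordinate change of the form
\[
x'_i = x_i + \tfrac{1}{1-\alpha^{2p}}F_{2p}(x,\xi),\qquad \xi'_j=\xi_j,
\]
followed by the analogous odd change, killing the degree-$2p$ part of the correction to $\psi_\alpha(x_i)$ one $p$ at a time. That scheme never assumes anything about the order of $\psi_\alpha$ itself: it only uses the eigenvalue computation $\psi_\alpha(F_{2p})\equiv\alpha^{2p}F_{2p}\bmod\mcJ^{2p+1}$, and the coefficient $\tfrac{1}{1-\alpha^{2p}}$ exists exactly when $\alpha^{2p}\neq 1$. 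For $r$ even, the indices $p$ that survive are precisely those for which $\alpha^{2p}=1$, i.e.\ the $\theta_\alpha$-invariant pieces, giving the block form in part (1).

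The gap in your argument is the finite-order assumption on $\psi_\alpha$, which the averaging
\[
\tilde\xi_j=\tfrac1r\sum_{t=0}^{r-1}\alpha^{-t}\psi_\alpha^t(\xi_j),\qquad
\tilde x_i=\tfrac1r\sum_{t=0}^{r-1}\psi_\alpha^t(x_i)
\]
cannot dispense with: a direct computation gives $\psi_\alpha(\tilde\xi_j)=\alpha\tilde\xi_j + \tfrac{\alpha}{r}\bigl(\psi_\alpha^{\,r}(\xi_j)-\xi_j\bigr)$, so the averaged $\tilde\xi_j$ is an $\alpha$-eigenvector only if $\psi_\alpha^{\,r}=\id$. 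But that fails in general. For example, with $\alpha=-1$ ($r=2$) on a superdomain with odd coordinates $\xi_1,\xi_2$, the lift $\psi(x)=x+\xi_1\xi_2$, $\psi(\xi_j)=-\xi_j$ satisfies $\psi^2(x)=x+2\xi_1\xi_2\neq x$; more generally $\psi_\alpha^{\,r}$ lies in $\mathcal Aut_{(2)}\mcO$ but is a nontrivial unipotent-type automorphism, and over $\C$ such an element has infinite order. Your suggested fix --- conjugating $\psi_\alpha^{\,r}$ to the identity --- does not make sense (the identity is fixed by conjugation, so $g\psi_\alpha^{\,r}g^{-1}=\id$ already forces $\psi_\alpha^{\,r}=\id$), and the claim that $\langle\psi_\alpha\rangle$ is a finite cyclic group when $r<\infty$ is false for the same reason. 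You flagged a potential obstacle at the end, but located it in controlling higher-order homogeneity, whereas the real obstruction is upstream: the averaging formula itself presupposes what one is trying to normalize away. Your treatment of $r=\infty$, by contrast, is essentially the paper's degree-by-degree killing and is sound in outline.
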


 \begin{proof}
 	Let $\mathcal A$ be any atlas on $\mcM$ and let us fix a chart $\mathcal U\in \mathcal A$ with coordinates $(x_i,\xi_j)$.
In local coordinates any lift $\psi_{\alpha}$ of $\phi_{\alpha}$ can be written in the following form
 	\begin{align*}
 	\psi_{\alpha}(x_i) = x_i + F_{2}+F_4+\cdots;\quad
 	 \psi_{\alpha}(\xi_j) = \alpha (\xi_j + G_3+ G_5\cdots),
 	\end{align*}
 	where $F_s=F_s(x_i,\xi_j)$ is a homogeneous polynomial in variables $\{\xi_j\}$ of degree $s$, and the same for $G_q=G_q(x_i,\xi_j)$ for odd $q$.
 We note that
$$
\psi_{\alpha}(F_{s})=\alpha^s F_{s}+\mcJ^{s+1},  \quad \psi_{\alpha}(G_{q})=\alpha^q G_{q}+\mcJ^{q+1}
$$
for any even $s$ and odd $q$.
The idea of the proof is to use successively the following coordinate change	
\begin{equation}\label{eq change x'= x+, xi'=xi}
\begin{split}
&(I)\quad	x'_i= x_i+ \frac{1}{1-\alpha^{2p}} F_{2p}(x_i,\xi_j),\quad \xi'_j = \xi_j;\\
&(II)\quad
x''_i= x'_i,\quad \xi''_j = \xi'_j+ \frac{1}{1-\alpha^{2p}} G_{2p+1}(x'_i,\xi'_j),
\end{split}
\end{equation}
where $p=1,2,3\ldots$ in the following way.

If $r=2$ there is nothing to check. If $r>2$, first of all we  apply (\ref{eq change x'= x+, xi'=xi})(I) and (\ref{eq change x'= x+, xi'=xi})(II) successively for $p=1$.  After coordinate changes (\ref{eq change x'= x+, xi'=xi})(I) we have
 \begin{align*}
 &\psi_{\alpha} (x'_i) = \psi_{\alpha} (x_i+ \frac{1}{1-\alpha^2} F_2) = x_i + F_2 + \frac{\alpha^2}{1-\alpha^2} F_2 +\cdots=\\
  &x_i +  \frac{1}{1-\alpha^2} F_2 +\cdots = x'_i +\cdots \in x'_i + \mathcal J^3;\quad
  \psi_{\alpha} (\xi'_j) \in  \al \xi'_j + \mathcal J^3.
 \end{align*}
 After coordinate changes (\ref{eq change x'= x+, xi'=xi})(II) similarly we will have
 \begin{equation}\label{eq after change p=1}
  \psi_{\alpha} (x''_i) \in x''_i + \mathcal J^4,\quad
 \psi_{\alpha} (\xi''_j) \in \al\xi''_j + \mathcal J^4.
 \end{equation}
	Now we change notations $x_i:=x''_i$ and $\xi_j:=\xi''_j$. Further, since (\ref{eq after change p=1}) holds, we have
	\begin{align*}
	\psi_{\alpha}(x_i) = x_i + F_4+F_6+\cdots;\quad
	\psi_{\alpha}(\xi_j) = \alpha (\xi_j + G_5 + G_7+\cdots).
	\end{align*}
Here we used the same notations for monomials $F_s$ and $G_q$ as above, however after the first step these functions may change.
Now we continue to change coordinates consequentially in this way.
 If $\al^{2p}\ne 1$ for any $p\in \mathbb N$, that is the order $r= \textsf{ord}(\phi_{\alpha})$ is odd or infinite, we can continue this procedure and obtain the required coordinates. This proves the second statement.

 If $r$ is even we  continue our procedure for $p<r/2$. Now in our new coordinates $\psi_{\al}$ has the following form
 \begin{align*}
 &\psi_{\alpha}(x_i) = x_i + F_{r}+F_{r+2}\cdots ;\quad
 &\psi_{\alpha}(\xi_j) = \alpha \xi_j  + \al G_{r+1} + \al G_{r+3} +\cdots.
 \end{align*}

 For any $p$ such that $\al^{2p}\ne 1$, the changes of variables inverse to (\ref{eq change x'= x+, xi'=xi})(I) and (\ref{eq change x'= x+, xi'=xi})(II) have the following form
 \begin{equation}\label{eq inverse of coordinate change}
 \begin{split}
 &(I)\quad	x_a= x'_a+ F'(x'_i,\xi'_j)_{(2p)}, \quad  \xi_b= \xi'_b ;\\
 &(II)\quad	x'_a= x''_a, \quad  \xi'_b= \xi''_b + G'(x''_i,\xi''_j)_{(2p+1)},
 \end{split}
 \end{equation}
 where $F'(x'_i,\xi'_j)_{(2p)}\in \mcJ^{2p}$ and $G'(x''_i,\xi''_j)_{(2p+1)} \in \mcJ^{2p+1}$.

Now we use again the coordinate change (\ref{eq change x'= x+, xi'=xi})(I) and (\ref{eq change x'= x+, xi'=xi})(II) for $p= r+2$, successively. Explicitly after coordinate changes (\ref{eq change x'= x+, xi'=xi})(I) using (\ref{eq inverse of coordinate change})  for $p= r+2$ we have
\begin{align*}
\psi_{\alpha} (x'_i) = \psi_{\alpha} (x_i+ \frac{1}{1-\alpha^{r+2}} F_{r+2}(x_i,\xi_j)) = x_i + F_r(x_i,\xi_j)+
F_{r+2}(x_i,\xi_j) +\\
\frac{\alpha^{r+2}}{1-\alpha^{r+2}} F_{r+2}(x_i,\xi_j) +\cdots=
x_i +  \frac{1}{1-\alpha^{r+2}} F_{r+2}(x_i,\xi_j)  + F_r(x_i,\xi_j) +\cdots =\\
 x'_i +  F_r(x_i,\xi_j)+\cdots \in x'_i +F_r(x'_i,\xi'_j) +\mathcal J^{r+3};\\
\psi_{\alpha} (\xi'_j) \in  \al \xi'_j +\al G_{r+1}(x'_i,\xi'_j) + \mathcal J^{r+3}.
\end{align*}
After the coordinate change (\ref{eq change x'= x+, xi'=xi})(II), we will have
 \begin{align*}
\psi_{\alpha} (x''_i) \in x''_i + F_r(x''_i,\xi''_j)+ \mathcal J^{r+4},\quad
\psi_{\alpha} (\xi''_j) \in \al\xi''_j + \al G_{r+1}(x''_i,\xi''_j) + \mathcal J^{r+4}.
\end{align*}

 Repeating this procedure for $p= r+4, \ldots, 2r-2$ and so on for $p\ne kr$, $k\in \mathbb N$ we obtain the result.
\end{proof}

\subsection{Lifting of an automorphism $\phi_{\al}$,  global picture}

 Now we will show that a supermanifold with an automorphism $\psi_{\al}$ has very special transition functions in an atlas $\mathcal A=\{\mathcal U_{a}\}$ from in Proposition \ref{prop new coordinates}. Recall that in any $\mathcal U_{a}\in \mathcal A$ with coordinates $(x_i,\xi_j)$ we can define an automorphism $\theta_{\al}^a = \theta_{\al}^a (\mathcal U_{a})$ as in Section \ref{sec aut theta} by $\theta^a_{\al}(x_i)=x_i$ and $\theta^a_{\al}(\xi_j)=\xi_j$.

 \begin{theorem}\label{theor main}
 	Let  $\mathcal A=\{\mathcal U_{a}\}$ be an atlas as in Proposition \ref{prop new coordinates} and	let there exists a lift $\psi_{\al}$ of the automorphism $\phi_{\al}$ of order $r= \textsf{ord}(\phi_{\alpha})$. Let us take two charts $\mathcal U_{a},\, \mathcal U_{b}\in \mathcal A $ such that  $U_{a}\cap U_{b}\ne \emptyset$ with coordinates $(x^{a}_s, \xi^{a}_t)$ and $(x^{b}_i, \xi^{b}_j)$, respectively, with  the transition functions $\Psi_{a b}: \mathcal U_{b}\to \mathcal U_{a}$.
 	\begin{enumerate}
 		\item[(I)] If $r$ is even, then we have
 		\begin{equation}\label{eq transition functions}
 		\theta_{\al}^b(\Psi_{a b}^* (x^{a}_s))  = \Psi_{a b}^* (x^{a}_s);\quad \theta_{\al}^b (\Psi_{a b}^* (\xi^{a}_t)) = \alpha \Psi_{a b}^* (\xi^{a}_t).
 		\end{equation}
 	Or more generally,
 	\begin{equation}\label{eq transition functions new}
 	\theta_{\al}^b \circ \Psi_{a b}^*   = \Psi_{a b}^* \circ  \theta_{\al}^a;\quad \theta_{\al}^b \circ \Psi_{a b}^*  =  \Psi_{a b}^* \circ  \theta_{\al}^a.
 \end{equation}

 		\item[(II)] If we can find an atlas $\mathcal A$ with transition functions satisfying (\ref{eq transition functions}),  the automorphism $\phi_{\al}$ possesses a lift $\psi_{\al}$.
 		
 		\item[(III)] If $r>1$ is odd or $r=\infty$, then $\mcM$ is split.
 	\end{enumerate}

 \end{theorem}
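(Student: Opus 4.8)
The three claims are of quite different flavors, so I would treat them separately but reuse the local normal form from Proposition~\ref{prop new coordinates}. For part~(I), the starting point is that the atlas $\mathcal A=\{\mathcal U_a\}$ has been chosen so that $\psi_\al$ acts on each chart by $\theta_\al^a$ up to the corrections controlled in Proposition~\ref{prop new coordinates}; in the even case this means exactly that $\psi_\al|_{\mathcal U_a}$ commutes with $\theta_\al^a$ chart-by-chart (since both multiply $\xi^a_j$ by $\al$ modulo higher-order terms fixed by $\theta_\al^a$). The transition map $\Psi_{ab}$ intertwines the two copies of $\psi_\al$, i.e. $\Psi_{ab}^*\circ\psi_\al^{(a)*}=\psi_\al^{(b)*}\circ\Psi_{ab}^*$ because $\psi_\al$ is a globally defined automorphism. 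Combining these two facts and the explicit normal form gives $\theta_\al^b\circ\Psi_{ab}^*=\Psi_{ab}^*\circ\theta_\al^a$; I would spell this out by checking it on the generators $x^a_s,\xi^a_t$, which is precisely \eqref{eq transition functions}, and then note that an algebra homomorphism intertwining the two gradings automatically satisfies the stronger statement \eqref{eq transition functions new}.

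For part~(II), conversely, suppose we are given an atlas whose transition functions satisfy \eqref{eq transition functions}. Then on each chart $\mathcal U_a$ the locally defined automorphism $\theta_\al^a$ is a candidate lift, and the compatibility \eqref{eq transition functions} says exactly that $\theta_\al^a$ and $\theta_\al^b$ agree on overlaps after conjugating by $\Psi_{ab}$, i.e. they glue to a global automorphism $\psi_\al\in H^0(M,\mathcal Aut\mathcal O)$. Since $\theta_\al^a$ reduces mod $\mathcal J^2$ to the homothety $\wedge\phi_\al$ on $\gr\mcO|_{U_a}$ (it multiplies each odd generator by $\al$), the glued $\psi_\al$ satisfies $\gr(\id,\psi_\al)=(\id,\wedge\phi_\al)$, so it is a lift of $\phi_\al$. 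This direction is essentially a gluing/cocycle bookkeeping argument and should be routine once \eqref{eq transition functions} is in hand.

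For part~(III), the claim is that if $r=\ord(\phi_\al)>1$ is odd or infinite, then the existence of a lift forces $\mcM$ to be split. Here I would invoke the second case of Proposition~\ref{prop new coordinates}: there is an atlas $\mathcal A=\{\mathcal U_a\}$ in which $\psi_\al$ has the \emph{exact} form $\psi_\al(x^a_i)=x^a_i$, $\psi_\al(\xi^a_j)=\al\xi^a_j$, with no higher corrections. Running the argument of part~(I) with this exact normal form, the transition functions now satisfy $\theta_\al^b\circ\Psi_{ab}^*=\Psi_{ab}^*\circ\theta_\al^a$ where $\theta_\al^a=\psi_\al|_{\mathcal U_a}$ on the nose. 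Decomposing $\Psi_{ab}^*(x^a_i)$ and $\Psi_{ab}^*(\xi^a_j)$ into $\theta_\al^a$-eigencomponents as in \eqref{eq decomposition al}, the relation forces $\Psi_{ab}^*(x^a_i)$ to lie in the $\al^0$-eigenspace and $\Psi_{ab}^*(\xi^a_j)$ in the $\al^1$-eigenspace. Because $r$ is odd or infinite, $\al^2\neq 1$, $\al^4\neq 1$, etc., so the only monomials in the $\xi$'s landing in the $\al^0$-eigenspace are those of even total degree \emph{that are actually $\al$-invariant} — and working out which powers $\al^s=1$ are possible, one sees the $\al^0$-part contains no odd-degree terms and the $\al^1$-part is purely linear in the $\xi$'s; hence the $x^a_i$ transform among themselves with purely even functions and the $\xi^a_j$ transform linearly, which is exactly the statement that the transition cocycle takes values in $\operatorname{Aut}\E\ltimes(\text{even part})$, i.e. the structure sheaf is $\bigwedge\mcE$ and $\mcM$ is split. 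The delicate point — and the main obstacle — is this last eigenspace analysis: one must check carefully that when $r$ is odd (say $r=2t+1$) no ``unexpected'' resonance $\al^s=1$ with $s$ odd reintroduces odd-degree terms into the $\al^0$-eigenspace, and that the constraint on $\Psi_{ab}^*(\xi^a_j)$ genuinely kills all nonlinear odd monomials; this is where the parity of $r$ is used in an essential way, in contrast to the even case where the grading only refines the $\Z_2$-grading and splitting need not follow.
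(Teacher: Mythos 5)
Your plan for parts (II) and (III) follows the paper's approach, and part (II) is correctly handled: on each chart one declares $\psi_\al(x^a_i)=x^a_i$, $\psi_\al(\xi^a_j)=\al\xi^a_j$, and the compatibility relations \eqref{eq transition functions new} ensure that these local definitions glue. However, for part (I) the chain of deductions you sketch is not sufficient as stated. You observe (correctly) that $\psi_\al|_{\mathcal U_a}$ commutes with $\theta_\al^a$ chart-by-chart, and that $\Psi^*_{ab}$ intertwines the two copies of $\psi_\al$. But two operators commuting with $\psi_\al^{(b)}$ need not be equal, so these two facts alone do not force $\theta_\al^b\circ\Psi_{ab}^*=\Psi_{ab}^*\circ\theta_\al^a$. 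The paper's proof instead decomposes $\Psi^*_{ab}(x^a_s)=\sum_k L_{2k}$ into homogeneous pieces in the odd variables and runs an \emph{iterative} degree-by-degree argument: first killing $L_2,\dots,L_{r-2}$ by comparing both sides of $\psi_\al\circ\Psi^*_{ab}=\Psi^*_{ab}\circ\psi_\al$ modulo $\mcJ^r$, then (using the $\theta_\al^b$-invariance of $\psi_\al(L_{sr})$ and of $\Psi^*_{ab}(F_r)$ modulo $\mcJ^{2r}$) killing $L_{r+2},\dots,L_{2r-2}$, and so on. That inductive bookkeeping is the substance of (I), and it is missing from your sketch.

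For part (III) you have correctly identified the delicate point — the possible resonances $\al^s=1$ — but you resolve it in the wrong direction. You write ``$\al^2\neq 1$, $\al^4\neq 1$, etc.'' and conclude that the $\al^0$-eigenspace contains only degree-$0$ terms and the $\al^1$-eigenspace only degree-$1$ terms. This is correct when $r=\infty$, but when $r=2t+1>1$ is finite and odd you have $\al^{2r}=(\al^r)^2=1$: the equations $(\al^{2k}-1)L_{2k}=0$ and $(\al^{2k}-1)M_{2k+1}=0$ only kill the components with $k$ \emph{not} a multiple of $r$, so terms $L_{2r},L_{4r},\dots$ and $M_{2r+1},M_{4r+1},\dots$ are a priori allowed to survive, and your conclusion ``$\Psi^*_{ab}(\xi^a_j)$ is purely linear'' does not follow from the eigenvalue constraint alone. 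What \emph{does} follow cleanly from the first pass is $L_2=\cdots=L_{2r-2}=0$, hence $o(\mcM)\ge 2r$, which is already stronger than what is needed in Corollary~\ref{cor order of smf and order of al} and the applications to $\PiG_{n,k}$. I should note that the paper's own write-up of (III) is terse and also does not spell out how to eliminate the residual $L_{2r},L_{4r},\dots$ for finite odd $r$ (it is airtight for $r=\infty$, which is the case handled in \cite{Koszul}); so you are not alone in stopping at this point, but you should not present the eigenvalue computation as if the odd parity of $r$ by itself kills the degree-$2r$ resonance.
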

 \begin{proof}
 	{\it (III)} Let $\Psi_{a b}^* (x^{a}_s) :=L(x^{b}_i, \xi^{b}_j)= \sum\limits_{k}L_{2k}$, where $L_{2k}$ are homogeneous polynomials of degree $2k$ in variables $\{\xi^{b}_j\}$. Then if $r>1$ is odd or $r=\infty$ by Proposition \ref{prop new coordinates} we have
 	\begin{align*}
 	\psi_{\al}\circ \Psi^*_{a b}(x^{a}_s)& = \psi_{\al} (\sum_{k}L_{2k}) = L_0 + \al^2L_{2} + \al^4L_{4} + \cdots ;\\
 	\Psi^*_{a b}\circ \psi_{\al}(x^{a}_s) &= \Psi^*_{a b} ( x^{a}_s) = L_0 + L_{2} +L_4  +\cdots.
 	\end{align*}
 	Since $\psi_{\al}$ globally defined on $\mcM$, we have the following equality
 	\begin{equation}\label{eq equality for psi_al}
 	\psi_{\al}\circ \Psi^*_{a b} = \Psi^*_{a b}\circ \psi_{\al},
 	\end{equation}
 	which implies that $L_{2q} = 0$ for any $q\geq 1$. Similarly, the equality $\psi_{\al}\circ \Psi^*_{a b}(\xi^{a}_t) = \Psi^*_{a b}\circ \psi_{\al}(\xi^{a}_t)$ implies that $\Psi^*_{a b}(\xi^{a}_t)$ is linear in $\{\xi^{b}_j\}$. In other words, $\mcM$ is split.
 	
 	{\it (I)} Now assume that $r$ is even. Similarly to above we have
 	\begin{align*}
 	\psi_{\al}\circ \Psi^*_{a b}(x^{a}_s)& = \psi_{\al} (\sum_{k}L_{2k}) = L_0 + \al^2L_{2} +  \cdots + \al^{r-2}L_{r-2} + L' ;\\
 	\Psi^*_{a b}\circ \psi_{\al}(x^{a}_s) &= \Psi^*_{a b} ( x^{a}_s + F_r+F_{2r}+\cdots ) = L_0 + L_{2}  +\cdots L_{r-2} + L'',
 	\end{align*}
 	where $L',L''\in \mcJ^{r}$. Again the equality (\ref{eq equality for psi_al}) implies that $L_2=\cdots = L_{r-2}=0$. Similarly, we can show that
 	$$
 	\Psi^*_{a b} (\xi^{a}_t) =  M_1+ M_{r+1} + M_{r+3}+\cdots ,
 	$$
 	where $M_{2k+1}$ are homogeneous polynomials of degree $2k+1$ in variables $\{\xi^{b}_j\}$.

 	Now if $T=T_0+T_1+T_2+\ldots$ is a decomposition of a super-function into homogeneous polynomials in $\{\xi^{b}_j\}$, denote by $[T]_q:= T_q$ its $q$'s part.  Using that $\psi_{\al} (L_{sr})$, where $s\in \mathbb N$, is $\theta_{\al}^b$-invariant,  we have
 	\begin{align*}
 	[\psi_{\al}\circ \Psi^*_{a b}(x^{a}_s)]_{2p} = \al^{2p} L_{2p},\quad 2p=r+2,\ldots, 2r-2.
 	\end{align*}
  Further, using  $\Psi^*_{a b} (F_r) $ is $\theta_{\al}^b$-invariant $mod\, \mcJ^{2r}$, we have
 	\begin{align*}
 	 [\Psi^*_{a b}\circ \psi_{\al}(x^{a}_s)]_{2p} = L_{2p}, \quad 2p=r+2,\ldots, 2r-2.
 	\end{align*}

 	This result implies that $L_{r+2}=\cdots= L_{2r-2}= 0$.  Similarly we work with $M(x^{b}_i, \xi^{b}_j)$.  In the same way we show that $L_{p}=0$ for any $p\ne sr$, where $s=0,1,2,\ldots$.
 	
 	{\it (II)}  If $\mcM$ possesses an atlas $\mathcal A$ with transition functions satisfying (\ref{eq transition functions new}), a lift $\psi_{\al}$ can be defined in the following way for any chart $\mathcal U_{a}$
 	\begin{equation}\label{eq psi standatd any al}
 	\psi_{\al}(x^{a}_i) = x^{a}_i;\quad \psi_{\al}(\xi^{a}_j) = \al \xi^{a}_j.
 	\end{equation}
 	Formulas (\ref{eq transition functions}) shows that $\psi_{\al}$ is well-defined. The proof is complete.
 \end{proof}

 \begin{remark}
Now we can show that (\ref{eq cocycle exact form}) is equivalent  to Theorem \ref{theor main} (I).  Let again $\Psi_{a b}: \mathcal U_{b}\to \mathcal U_{a}$ be the transition function defined in $\mathcal U_a\cap \mathcal U_b$.  In \cite[Section 2]{Bunegina} it was shown that we can decompose these transition functions  in the following way
$$
\Psi^*_{ab} = \gamma_{ab} \circ \gr \Psi^*_{ab},
$$
where  $(\gamma_{ab})$ is a \v{C}ech cocycle corresponding to the covering $\mathcal A=\{\mathcal U_a\}$ representing $\mcM$, see Theorem \ref{Theor_Green}, and $\gamma_{ab}$ is written in coordinates of $\mathcal U_b$.  In other words this means that the transition functions $\Psi_{ab}$  may be obtained from the transition functions of $\gr\Psi_{a b}: \mathcal U_{b}\to \mathcal U_{a}$ of $\gr \mcM$ applying the automorphism $\gamma_{ab}$. (Here we identified $\gr \mathcal U_c$ and $\mathcal U_c$ in a  natural way.)

 In the structure sheaf of $\mathcal U_a$ (respectively $\mathcal U_b$) there is an automorphism $\theta_{\al}^a$ (respectively $\theta_{\al}^b$) defined as above. Since $\gr\mathcal U_c= \mathcal U_c$, we get $\theta_{\al}^a = \phi_{\al}|_{\mathcal U_a}$. Recall that the statement Theorem \ref{theor main} (I)  we can reformulate in the following way
  $$
 \Psi^*_{ab}\circ \phi_{\al} = \phi_{\al} \circ  \Psi^*_{ab}.
 $$
  Further, since, $\gr\Psi^*_{ab}\circ \phi_{\al} = \phi_{\al}\circ \gr \Psi^*_{ab}$, we get $\phi_{\al} \circ \gamma_{ab} = \gamma_{ab}\circ \phi_{\al}$.   Conversely, if $\gamma_{ab}$ is $\phi_{\al}$-invariant, then applying $\Psi^*_{ab} = \gamma_{ab} \circ \gr \Psi^*_{ab}$ we get Theorem \ref{theor main} (I).
\end{remark}

\begin{remark}
	In case $r=\infty$ the result of Theorem \ref{theor main} can be deduced from an observation made in \cite{Koszul} about lifting of graded operators.
\end{remark}

Now we can formulate several corollaries of Theorem \ref{theor main}.

\begin{corollary}
Let $r= \textsf{ord}(\phi_{\al})>1$ and let there exists a lift $\psi_{\al}$ of $\phi_{\al}$ on $\mcM$. Then there exist another lift, denoted by $\psi'_{\al}$, of $\phi_{\al}$ and an atlas $\mathcal A=\{\mathcal U_{a}\}$ with local coordinates $(x^{a}_i,\xi^{a}_j)$ in $\mathcal U_{a}$  such that
$$
\psi'_{\alpha}	(x_i^{a}) = x_i^{a} ,\quad  \psi'_{\alpha} (\xi_k^{a}) = \al \xi_k^{a}.
$$
Indeed, for $r>1$ is odd or $r=\infty$ we can use Proposition \ref{prop new coordinates}(2). For $r$ is even the statement follows from Formulas (\ref{eq psi standatd any al}).
 \end{corollary}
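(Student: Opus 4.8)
The plan is to treat separately the two cases distinguished in Proposition~\ref{prop new coordinates}, because in one of them there is essentially nothing to prove, while in the other the required lift is produced directly from the commutation property of the transition functions.

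First I would dispose of the case in which $r>1$ is odd or $r=\infty$. Here Proposition~\ref{prop new coordinates}(2) already furnishes an atlas $\mathcal A=\{\mathcal U_a\}$, with coordinates $(x^a_i,\xi^a_j)$ on each $\mathcal U_a$, in which the given lift itself satisfies $\psi_\al(x^a_i)=x^a_i$ and $\psi_\al(\xi^a_j)=\al\xi^a_j$; so it suffices to set $\psi'_\al:=\psi_\al$. (By Theorem~\ref{theor main}(III) this case in fact forces $\mcM$ to be split, but we do not need that.)

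For $r$ even I would begin with an atlas $\mathcal A=\{\mathcal U_a\}$ as in Proposition~\ref{prop new coordinates}(1), so that on every chart $\theta_{\al}^a(\psi_\al(x^a_i))=\psi_\al(x^a_i)$ and $\theta_{\al}^a(\psi_\al(\xi^a_k))=\al\,\psi_\al(\xi^a_k)$. Theorem~\ref{theor main}(I) then asserts that the transition functions $\Psi_{ab}\colon\mathcal U_b\to\mathcal U_a$ of this atlas intertwine the local automorphisms $\theta_{\al}^a$, i.e.\ $\theta_{\al}^b\circ\Psi^*_{ab}=\Psi^*_{ab}\circ\theta_{\al}^a$ as in~(\ref{eq transition functions new}). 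I would then define $\psi'_\al$ chart by chart by the standard formulas~(\ref{eq psi standatd any al}), namely $\psi'_\al|_{\mathcal U_a}:=\theta_{\al}^a$, that is $\psi'_\al(x^a_i)=x^a_i$ and $\psi'_\al(\xi^a_k)=\al\xi^a_k$. The one point with any content — and the place I expect to have to be careful — is that these local automorphisms must agree on the overlaps $\mathcal U_a\cap\mathcal U_b$ in order to define a global $\psi'_\al\in H^0(M,\mathcal{A}ut\,\mcO)$; but that is exactly the intertwining relation~(\ref{eq transition functions new}), so the gluing goes through. Finally $\psi'_\al$ is indeed a lift of $\phi_\al$: since $\theta_{\al}^a$ induces multiplication by $\al$ on $\mcE=\mcJ/\mcJ^2$, and hence the automorphism $\wedge\phi_\al$ of $\gr\mcO\cong\bigwedge\mcE$, we get $\gr(\id,\psi'_\al)=(\id,\wedge\phi_\al)$. (For $r=2$ this degenerates to the tautology that the fibrewise parity automorphism is globally defined.) Apart from this gluing check, the whole statement is just a repackaging of Proposition~\ref{prop new coordinates} together with parts (I) and (II) of Theorem~\ref{theor main}, so I foresee no genuine difficulty.
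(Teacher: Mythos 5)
Your argument is correct and follows essentially the same route as the paper: for $r>1$ odd or $r=\infty$ you invoke Proposition~\ref{prop new coordinates}(2) and take $\psi'_\al=\psi_\al$, and for $r$ even you pass through Theorem~\ref{theor main}(I)--(II) to produce an atlas with intertwining transition functions and then glue the chartwise automorphisms $\theta^a_\al$ via Formulas~(\ref{eq psi standatd any al}). The only difference is that you spell out the gluing check and the verification that $\gr(\id,\psi'_\al)=(\id,\wedge\phi_\al)$, which the paper leaves implicit inside the proof of Theorem~\ref{theor main}(II).
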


 \begin{corollary}\label{cor psi_-1 exists}
 	Any supermanifold $\mcM$ possesses a lift of an automorphism $\phi_{-1}$. Indeed, by definition $\mcM$ possesses an atlas satisfying (\ref{eq transition functions}).  Therefore
 	 in (any) local coordinates $(x_a,\xi_b)$ of $\mcM$ we can define an automorphism $\psi^{st}_{-1}$ by the following formulas
 	 $$
 	 \psi^{st}_{-1}(x_a)=x_a;\quad  \psi^{st}_{-1}(\xi_b)=-\xi_b.
 	 $$
 	 We will call this automorphism {\it standard}.
 	We also can define this automorphism in the following coordinate free way
 	$$
 	\psi^{st}_{-1}(f)=(-1)^{\tilde i}f, \quad f\in \mathcal O_{\bar i}.
 	$$
 \end{corollary}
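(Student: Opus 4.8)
The plan is to specialize the machinery of Theorem \ref{theor main} to the case $\al=-1$, where $r=\ord(\phi_{-1})=2$ and the auxiliary automorphisms $\theta^a_{\al}$ degenerate to something canonical. Recall from Subsection \ref{sec aut theta} that when $r=2$ the $\Z_r$-grading induced by $\theta^a_{-1}$ on $\mcO|_{U_a}$ is nothing but the standard $\Z_2$-grading into even and odd parts; that is, $\theta^a_{-1}$ acts on a homogeneous local section $f$ by $f\mapsto(-1)^{\bar f}f$, independently of the chosen coordinates. In particular the operators $\theta^a_{-1}$ attached to different charts $\mathcal U_a$ all agree on overlaps and glue to a single endomorphism of $\mcO$.

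First I would observe that, since every morphism of supermanifolds is even, each transition function $\Psi_{ab}\colon\mathcal U_b\to\mathcal U_a$ preserves the $\Z_2$-grading, hence $\Psi^*_{ab}$ commutes with the parity operator. Spelled out in the notation of Theorem \ref{theor main}, this says precisely
\[
\theta^b_{-1}\circ\Psi^*_{ab}=\Psi^*_{ab}\circ\theta^a_{-1},
\]
which is exactly condition (\ref{eq transition functions new}) (equivalently (\ref{eq transition functions})) for $\al=-1$. Thus \emph{any} atlas on $\mcM$ already satisfies the hypothesis of Theorem \ref{theor main}(II) with $\al=-1$, and no coordinate adjustment of the type produced by Proposition \ref{prop new coordinates} is needed.

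By Theorem \ref{theor main}(II) the automorphism $\phi_{-1}$ of $\E$ therefore lifts to $\mcM$, and by formula (\ref{eq psi standatd any al}) a lift may be written in every chart $\mathcal U_a$ with coordinates $(x^a_i,\xi^a_j)$ by $\psi^{st}_{-1}(x^a_i)=x^a_i$ and $\psi^{st}_{-1}(\xi^a_j)=-\xi^a_j$; the compatibility established above guarantees these local formulas glue on overlaps. Equivalently, and manifestly coordinate-free, $\psi^{st}_{-1}(f)=(-1)^{\bar f}f$ for homogeneous $f\in\mcO$, and one checks directly that its associated graded is $\wedge\phi_{-1}$. There is essentially no serious obstacle here: the only point worth isolating is the elementary remark that for $\al=-1$ the abstract automorphisms $\theta^a_{\al}$ collapse to the intrinsic parity involution, after which Theorem \ref{theor main}(II) applies verbatim; alternatively one can bypass Theorem \ref{theor main} altogether and simply verify by hand that $f\mapsto(-1)^{\bar f}f$ is a well-defined automorphism of the $\Z_2$-graded structure sheaf lifting $\wedge\phi_{-1}$.
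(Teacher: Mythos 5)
Your proof is correct and follows essentially the same route as the paper. The paper's own justification is the terse remark that ``by definition $\mcM$ possesses an atlas satisfying (\ref{eq transition functions})'', and your argument simply unpacks why this is automatic for $\al=-1$: the operator $\theta^a_{-1}$ collapses to the intrinsic parity involution $f\mapsto(-1)^{\bar f}f$, which is chart-independent, and since supermanifold morphisms are even by definition the transition functions commute with it, so condition (\ref{eq transition functions new}) holds for any atlas and Theorem~\ref{theor main}(II) applies.
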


 \begin{corollary}\label{cor phi can be lifted iff}
 Let $r= \textsf{ord}(\phi_{\al})>1$ be odd or $\infty$. Then the automorphism $\phi_{\al}$ can be lifted to a supermanifold $\mcM$ if and only if $\mcM$ is split.
 \end{corollary}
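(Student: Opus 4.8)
The plan is to establish the two implications separately; both follow quickly from the machinery already in place, so I expect the write-up to be short.

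For the ``only if'' direction I would argue as follows. Suppose $\phi_\al$ admits a lift $\psi_\al\in H^0(M,\mathcal{A}ut\mathcal O)$. Since $r=\textsf{ord}(\phi_\al)>1$ is odd or infinite, Proposition \ref{prop new coordinates}(2) produces an atlas $\mathcal A=\{\mathcal U_a\}$ with local coordinates $(x_i^a,\xi_j^a)$ in which $\psi_\al(x_i^a)=x_i^a$ and $\psi_\al(\xi_j^a)=\al\xi_j^a$; that is, $\psi_\al$ restricts to $\theta_\al^a$ on each chart. With this atlas the hypothesis of Theorem \ref{theor main}(III) is met, and that theorem directly yields that $\mcM$ is split. (Unwinding the argument there: compatibility of $\psi_\al$ with each transition function $\Psi_{ab}$ forces $\Psi^*_{ab}(x^a_s)$ to have no nilpotent part and $\Psi^*_{ab}(\xi^a_t)$ to be linear in the $\xi^b_j$, i.e. $\mcM\simeq\gr\mcM$.)

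For the ``if'' direction, suppose $\mcM$ is split, so $\mcO\simeq\bigwedge\mcE$ with $\mcE=\mcJ/\mcJ^2$, and the structure sheaf is $\Z$-graded by wedge degree. I would then simply take $\psi_\al:=\wedge\phi_\al$, multiplication by $\al^p$ on the degree-$p$ summand. This is a globally defined automorphism of $\mcO$ because the transition functions of a split supermanifold preserve the $\Z$-grading (they are linear in the odd coordinates with coefficients among the even ones), so the chartwise maps $\theta_\al^a$ and $\theta_\al^b$ agree on $\mathcal U_a\cap\mathcal U_b$; equivalently, the split atlas trivially satisfies (\ref{eq transition functions}), so Theorem \ref{theor main}(II) applies. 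Since $\gr(\id,\wedge\phi_\al)=(\id,\wedge\phi_\al)$ by construction, $\psi_\al$ is a lift of $\phi_\al$ in the sense of Section \ref{sec Automorphism psi_al}.

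The proof requires essentially no new computation; the only point deserving a moment's care is the verification in the converse direction that the chartwise automorphism $\theta_\al^a$ actually glues — but this is precisely the defining feature of a split supermanifold — and that $\wedge\phi_\al$ qualifies as a lift in the stipulated sense. Hence I do not anticipate a genuine obstacle: the corollary is a repackaging of Theorem \ref{theor main}(III) together with the triviality of the split case.
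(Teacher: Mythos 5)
Your proposal is correct and matches the route the paper intends: the corollary is stated without a written proof precisely because the ``only if'' direction is a verbatim reading of Theorem \ref{theor main}(III) (via Proposition \ref{prop new coordinates}(2)), and the ``if'' direction is immediate since a split supermanifold carries the $\Z$-graded automorphism $\wedge\phi_\al$, which is its own lift — equivalently, its transition functions satisfy (\ref{eq transition functions}) so Theorem \ref{theor main}(II) applies. No gaps; the write-up is slightly more verbose than needed but entirely sound.
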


 \begin{corollary}\label{cor order of smf and order of al}
 	If the automorphism $\phi_{\al}$ can be lifted to a supermanifold $\mcM$, then $o(\mcM)\geq \textsf{ord}(\phi_{\al})$, where $o(\mcM)$ is the order of a supermanifold $\mcM$, see Section \ref{sec Order of a supermanifold}. In particular, if $o(\mcM)=2$, the automorphism $\phi_{\al}$ can be listed to $\mcM$ only for $\al=\pm 1$.
 \end{corollary}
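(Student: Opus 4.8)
The plan is to read off the inequality from Theorem~\ref{theor main} together with the definition of the order of a supermanifold in Section~\ref{sec Order of a supermanifold}. Put $r=\ord(\phi_{\al})$ and assume a lift $\psi_{\al}$ of $\phi_{\al}$ exists on $\mcM$. I would first dispose of the degenerate cases. If $r=1$ there is nothing to prove, since the Green class of $\mcM$ always lies in $H_{2}$, so $o(\mcM)\ge 2\ge r$. If $r>1$ is odd or $r=\infty$, then Corollary~\ref{cor phi can be lifted iff} forces $\mcM$ to be split, hence $o(\mcM)=\infty\ge r$. This leaves the only genuine case, $r$ even, which occupies the rest of the argument.

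Assume $r$ is even. Since $\psi_{\al}$ exists, Proposition~\ref{prop new coordinates} furnishes an atlas $\mathcal A=\{\mcU_{a}\}$ in which $\psi_{\al}$ acts by $x^{a}_{i}\mapsto x^{a}_{i}$, $\xi^{a}_{j}\mapsto\al\xi^{a}_{j}$, and Theorem~\ref{theor main}(I), in the operator form \eqref{eq transition functions new}, gives that the transition functions satisfy $\theta^{b}_{\al}\circ\Psi^{*}_{ab}=\Psi^{*}_{ab}\circ\theta^{a}_{\al}$, where $\theta^{c}_{\al}$ is the automorphism of Section~\ref{sec Automorphism psi_al}. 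I would then feed this into the decomposition $\Psi^{*}_{ab}=\gamma_{ab}\circ\gr\Psi^{*}_{ab}$ from the Remark following Theorem~\ref{theor main} (see also \cite[Section~2]{Bunegina}), where $(\gamma_{ab})$ is a \u{C}ech cocycle representing the Green class $\gamma\in H^{1}(M,\mathcal{A}ut_{(2)}\gr\mcO)$ of $\mcM$. The factor $\gr\Psi^{*}_{ab}$, being the transition function of the split retract $\gr\mcM$, preserves the $\Z$-grading of $\gr\mcO$ and hence commutes with $\theta_{\al}$; combining this with the previous relation yields $\theta^{b}_{\al}\circ\gamma_{ab}=\gamma_{ab}\circ\theta^{a}_{\al}$.

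The core step is a degree count on $\gamma_{ab}$. In coordinates on $\mcU_{a}\cap\mcU_{b}$ write
\begin{align*}
\gamma_{ab}(x_{i})=x_{i}+\sum_{s\ge 2}P^{i}_{s},\qquad
\gamma_{ab}(\xi_{j})=\xi_{j}+\sum_{q\ge 3}Q^{j}_{q},
\end{align*}
with $P^{i}_{s}$, $Q^{j}_{q}$ homogeneous of $\Z$-degree $s$, $q$ in the odd variables, the terms of degree $0$ and $1$ being $x_{i}$, $\xi_{j}$ because $\gr(\gamma_{ab})=\id$. Comparing homogeneous components in $\theta^{b}_{\al}\circ\gamma_{ab}=\gamma_{ab}\circ\theta^{a}_{\al}$ gives $\al^{s}P^{i}_{s}=P^{i}_{s}$ and $\al^{q-1}Q^{j}_{q}=Q^{j}_{q}$, so $P^{i}_{s}=0$ unless $r\mid s$ and $Q^{j}_{q}=0$ unless $r\mid(q-1)$. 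In particular $\gamma_{ab}(u)\equiv u\pmod{\mcJ^{r}}$ for every local section $u$, i.e. $\gamma_{ab}\in\mathcal{A}ut_{(r)}\gr\mcO(\mcU_{a}\cap\mcU_{b})$; here it is essential that $r$ is even, so that $\mathcal{A}ut_{(r)}\gr\mcO$ is one of the terms of the filtration \eqref{filtr_Aut_O}. Hence $\gamma$ lies in the image $H_{r}$ of $H^{1}(M,\mathcal{A}ut_{(r)}\gr\mcO)$ in $H^{1}(M,\mathcal{A}ut_{(2)}\gr\mcO)$, and by the definition of the order $o(\mcM)\ge r=\ord(\phi_{\al})$. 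For the last assertion: if $o(\mcM)=2$ and $\phi_{\al}$ lifts, then $\ord(\phi_{\al})\le 2$, and the only $\al\in\C^{*}$ with $\ord(\phi_{\al})\in\{1,2\}$ are $\al=\pm1$.

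I expect the only real obstacle to be bookkeeping rather than conceptual: keeping $r$ even so that $\mathcal{A}ut_{(r)}\gr\mcO$ genuinely occurs in the filtration \eqref{filtr_Aut_O}, and checking that the pointwise membership $\gamma_{ab}\in\mathcal{A}ut_{(r)}\gr\mcO$ over the intersections $\mcU_{a}\cap\mcU_{b}$ is exactly what it means for the class $\gamma$ to lie in $H_{r}$. Everything else is the short homogeneity comparison above.
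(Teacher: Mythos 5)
Your proof is correct, and it is essentially the argument the paper has in mind: the paper states this corollary without proof as an immediate consequence of Theorem \ref{theor main} and the remark following it, and your write-up is exactly the natural filling-in of the details. The case split is clean, the reduction to $r$ even via Corollary \ref{cor phi can be lifted iff} is right, and the degree comparison from $\theta_{\al}\circ\gamma_{ab}=\gamma_{ab}\circ\theta_{\al}$ correctly forces $P^i_s=0$ unless $r\mid s$ and $Q^j_q=0$ unless $r\mid(q-1)$, giving $\gamma_{ab}\in\mathcal{A}ut_{(r)}\gr\mathcal O$ pointwise and hence $\gamma\in H_r$. One small remark worth keeping in mind but not a gap: the order $o(\gamma)$ is well defined on the orbit $[\gamma]$ because the filtration $\{H_{2p}\}$ is $\operatorname{Aut}\E$-invariant, so exhibiting a single representing cocycle valued in $\mathcal{A}ut_{(r)}\gr\mathcal O$ is indeed sufficient, as you implicitly use.
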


  \subsection{Lifting of the automorphism $\phi_{1}$ and consequences}
 By definition any lift $\psi_{1}$ of  the automorphism $\phi_{1}=\id$ is a global section of the sheaf $H^0(M,\mathcal{A}ut_{(2)}\mathcal{O})$, see  Section \ref{sec A classification theorem}. The $0$-cohomology group $H^0(M,\mathcal{A}ut_{(2)}\mathcal{O})$ can be computed using the following exact sequence
 \begin{align*}
 \{e\} \to \mathcal{A}ut_{(2q+2)}\mathcal{O} \to \mathcal{A}ut_{(2q)}\mathcal{O} \to (\mathcal T_{\gr})_{2q}\to 0, \quad p\geq 1,
 \end{align*}
see (\ref{eq exact sequence}). Further let we have two lifts $\psi_{\al}$ and $\psi'_{\al}$ of $\phi_{\al}$. Then the composition $\Psi_1:=(\psi_{\al})^{-1}\circ \psi'_{\al}$ is a lift of $\phi_{1}$. Therefore any lift $\psi'_{\al}$ is equal to the composition $\psi_{\al} \circ \Psi_1$ of a fixed lift $\psi_{\al}$ and an element from $\Psi_1\in H^0(M,\mathcal{A}ut_{(2)}\mathcal{O})$. In particular, according Corollary \ref{cor psi_-1 exists} there always exists the standard lift $\psi^{st  }_{-1}$ of $\phi_{-1}$.  Therefore for any lift $\psi'_{-1}$ we have $\psi'_{-1} = \psi^{st}_{-1} \circ \Psi_1$, where  $\Psi_1\in H^0(M,\mathcal{A}ut_{(2)}\mathcal{O})$.

 \section{Automorphisms of the structure sheaf of $\Pi\!\Gr_{n,k}$
}

Let $\mathcal M=\Pi\!\Gr_{n,k}$ be a $\Pi$-symmetric super-Grassmannian. Recall that the retract $\gr\Pi\!\Gr_{n,k}$ of  $\Pi\!\Gr_{n,k}$ is isomorphic to $(\Gr_{n,k}, \bigwedge \Omega)$, where  $\Omega$ is the sheaf of $1$-forms on the usual Grassmannian $\Gr_{n,k}$. The sheaf $\Omega$ is the sheaf of sections of the cotangent bundle $\textsf{T}^*(M)$ over $M=\Gr_{n,k}$. In the next subsection we recover a well-known result  about the automorphism group $\operatorname{Aut}\textsf{T}^*(M)$ of $\textsf{T}^*(M)$.

 \subsection{Automorphisms of the cotangent bundle over a Grassmannian}

 Let $M= \Gr_{n,k}$ be the usual Grassmannian, i.e. the complex manifold that paramete\-rizes all  $k$-dimensional linear subspaces in $\mathbb C^n$ and let  $\textsf{T}^*(M)$ be its cotangent bundle. It is well-known result that $\operatorname{End} \textsf{T}^*(M) \simeq \mathbb C$. Therefore, $\operatorname{Aut}\textsf{T}^*(M) \simeq \mathbb C^*$. For completeness we will prove this fact using use the Borel-Weil-Bott Theorem, see for example \cite{ADima} for details.

 Let  $G=\GL_{n}(\mathbb C)$ be the general linear group,  $P$ be a parabolic subgroup in $G$, $R$ be the reductive part of $P$ and let $\E_{\chi}\to G/P$ be the homogeneous vector bundle corresponding to a representation  $\chi$ of
 $P$ in the fiber $E=(\E_{\chi})_{P}$. Denote by $\mathcal E_{\chi}$ the sheaf of holomorphic section of $\E_{\chi}$.
 In the Lie algebra  $\mathfrak{gl}_{n}(\mathbb C)=\operatorname {Lie}(G)$
 we fix the Cartan subalgebra $\mathfrak t=
 \{\operatorname{diag}(\mu_1,\dots,\mu_n)\}$,
 the following system of positive roots
 $$
 \Delta^+=\{\mu_i-\mu_j\,\,|\,\, \,\,1\leq i<j \leq n\},
 $$
 and the following system of simple roots
 $  \Phi= \{\alpha_1,..., \alpha_{n-1}\}, \,\,\,
 \alpha_i=\mu_i-\mu_{i+1}$, where $i=1,\ldots , n-1$.
 Denote by $\mathfrak t^*(\mathbb R)$
 a real subspace in $\mathfrak t^*$
 spanned by $\mu_j$. Consider the scalar product $( \,,\, )$ in $\mathfrak t^*(\mathbb R)$ such that the vectors  $\mu_j$ form an orthonormal basis. An element $\gamma\in \mathfrak t^*(\mathbb R)$ is called {\it dominant} if $(\gamma, \alpha)\ge 0$ for all $\alpha \in \Delta^+$. We assume that $B^-\subset P$, where $B^-$ is the Borel subgroup corresponding to $\Delta^-$.

 \begin{theorem}[Borel-Weil-Bott]
 	\label{teor borel}  Assume that the representation	$\chi: P\to \GL(E)$ is completely reducible and $\lambda_1,..., \lambda_s$ are highest weights of $\chi|R$. Then the $G$-module $H^0(G/P,\mathcal E_{\chi})$ is isomorphic to the sum of irreducible $G$-modules with highest weights $\lambda_{i_1},..., \lambda_{i_t}$, where
 	$\lambda_{i_a}$ are dominant highest weights of $\chi|R$.
 \end{theorem}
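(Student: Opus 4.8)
The plan is to deduce Theorem~\ref{teor borel} from the classical Borel--Weil theorem on the full flag manifold $G/B^-$ by two reductions: use complete reducibility of $\chi$ to pass to an irreducible summand, and then push the cohomology computation down the Bott fibration $G/B^-\to G/P$. No part of the full strength of Bott's theorem (cohomology concentrated in one degree, governed by a Weyl group element) is needed, since only $H^{0}$ appears in the statement.

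First I would reduce to the case in which $\chi$ is irreducible. As $\chi$ is assumed completely reducible, decompose $E=\bigoplus_{j=1}^{s}E_{j}$ into $P$-submodules with each $E_{j}$ irreducible over $R$ of highest weight $\lambda_{j}$. The unipotent radical $U$ of $P$ acts on each irreducible $P$-module through a semisimple representation of a unipotent group, hence trivially, so $\chi$ factors through $R=P/U$. The decomposition of $E$ induces $\mathcal E_{\chi}\simeq\bigoplus_{j}\mathcal E_{\chi_{j}}$ and therefore $H^{0}(G/P,\mathcal E_{\chi})\simeq\bigoplus_{j}H^{0}(G/P,\mathcal E_{\chi_{j}})$, so it suffices to compute $H^{0}(G/P,\mathcal E_{\chi_{j}})$ for a single irreducible $R$-module $E_{j}$ with highest weight $\lambda_{j}$.

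Next, let $\pi\colon G/B^-\to G/P$ be the natural projection, whose fiber over the base point is the flag manifold $P/B^-$ of the reductive group $R$, and let $\mathcal L_{\lambda_{j}}$ denote the invertible sheaf on $G/B^-$ attached to the character $\lambda_{j}$ of $B^-$. Restricted to a fiber, $\mathcal L_{\lambda_{j}}$ is the line bundle on $P/B^-$ determined by $\lambda_{j}$; since $\lambda_{j}$ is dominant for $R$ (being an $R$-highest weight), Borel--Weil for the group $R$ gives $H^{0}(P/B^-,\mathcal L_{\lambda_{j}})\simeq E_{j}$ as an $R$-module and $H^{q}(P/B^-,\mathcal L_{\lambda_{j}})=0$ for $q>0$. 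Coherent base change for the flag bundle $\pi$ then identifies $\pi_{*}\mathcal L_{\lambda_{j}}$ with $\mathcal E_{\chi_{j}}$ and shows $R^{q}\pi_{*}\mathcal L_{\lambda_{j}}=0$ for $q>0$, so the Leray spectral sequence collapses to
$$
H^{0}(G/P,\mathcal E_{\chi_{j}})\simeq H^{0}(G/B^-,\mathcal L_{\lambda_{j}}).
$$
The classical Borel--Weil theorem (see \cite{ADima}) then says that $H^{0}(G/B^-,\mathcal L_{\lambda_{j}})$ is the irreducible $G$-module of highest weight $\lambda_{j}$ when $\lambda_{j}$ is dominant and is $0$ otherwise; summing over $j$ yields exactly the claimed description of $H^{0}(G/P,\mathcal E_{\chi})$.

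The part requiring the most care is not the cohomological machinery, which is formal once Borel--Weil is invoked, but the bookkeeping of conventions: $P$ contains the \emph{opposite} Borel $B^-$, so one must verify that the weight $\lambda_{j}$ arising as an $R$-highest weight of $E_{j}$ is the same weight that controls $H^{0}(G/B^-,\mathcal L_{\lambda_{j}})$, and that ``dominant'' is taken consistently relative to $\Delta^{+}$ in both the hypothesis and the conclusion. With the root and weight conventions fixed before the statement this is a direct check, and it is the only place where one could easily slip.
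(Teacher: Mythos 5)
The paper does not actually prove Theorem~\ref{teor borel}: it is stated as a known form of the Borel--Weil--Bott theorem and the reader is referred to \cite{ADima} for details, so there is no in-paper argument to compare against. Taken on its own, your sketch is the standard way to establish this special case (only $H^{0}$) from the classical Borel--Weil theorem, and it is correct. The reduction to $\chi$ irreducible is fine: an irreducible $P$-module has a nonzero $U$-fixed subspace by Engel/Lie--Kolchin, and since $U\triangleleft P$ this subspace is a $P$-submodule, hence all of $E_{j}$, so $\chi_{j}$ factors through $R$. The fibration $\pi\colon G/B^{-}\to G/P$ has fiber $P/B^{-}\cong R/(R\cap B^{-})$ because the unipotent radical $U_{P}\subset B^{-}$, so Borel--Weil for the reductive group $R$ applies on fibers; the identification $\pi_{*}\mathcal L_{\lambda_{j}}\simeq\mathcal E_{\chi_{j}}$ together with $R^{q}\pi_{*}\mathcal L_{\lambda_{j}}=0$ for $q>0$ then collapses Leray to $H^{0}(G/P,\mathcal E_{\chi_{j}})\simeq H^{0}(G/B^{-},\mathcal L_{\lambda_{j}})$, and Borel--Weil on $G/B^{-}$ finishes. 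You are right that the only delicate point is the consistency of the sign and dominance conventions, since $P$ contains the opposite Borel $B^{-}$; with the conventions the paper fixes just before the statement (positive system $\Delta^{+}=\{\mu_{i}-\mu_{j}:i<j\}$, $B^{-}\subset P$, dominance relative to $\Delta^{+}$), a direct check confirms that the $R$-highest weight $\lambda_{j}$ is the same weight that governs $H^{0}(G/B^{-},\mathcal L_{\lambda_{j}})$, so the argument closes.
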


Now we apply this theorem to the case of the usual Grassmannian $\Gr_{n,k}$.  We have $\Gr_{n,k}\simeq G/P$, where $G= \GL_n(\mathbb C)$ and  $P\subset G$ is given by
$$
P= \left\{ \left(
\begin{array}{cc}
A&0\\
B&C
\end{array}
\right)
\right\},
$$
where $A$ is a complex $k\times k$-matrix. We see that $R= \GL_k(\mathbb C)\times\GL_{n-k}(\mathbb C)$. The isotropy representation $\chi$ of $P$ can be computed in a standard way, see for instance \cite[Proposition 5.2]{COT}. The representation $\chi$ is completely reducible and it is equal to $\rho_1\otimes \rho^*_2$, where $\rho_1$ and $\rho_2$ are standard representations of the Lie groups $\GL_k(\mathbb C)$ and $\GL_{n-k}(\mathbb C)$, respectively.

 \begin{proposition}\label{prop automorphisms of T^*(M)}
For usual Grassmannian $M= \Gr_{n,k}$, where $n-k,k>0$, we have
 	$$
 	\operatorname{End} \textsf{T}^*(M) \simeq \mathbb C,\quad \operatorname{Aut}\textsf{T}^*(M) \simeq \mathbb C^*.
 	$$
 \end{proposition}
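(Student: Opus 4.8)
The plan is to compute $H^0(M,\mathcal{E}nd\,\textsf{T}^*(M))$ directly via Borel–Weil–Bott, using the identification $\textsf{T}^*(M)=\E_{\chi}$ with $\chi=\rho_1\otimes\rho_2^*$ already recorded above. First I would observe that $\operatorname{End}\textsf{T}^*(M)=H^0(M,\mathcal{E}nd\,\textsf{T}^*(M))=H^0(G/P,\mathcal{E}_{\chi\otimes\chi^*})$, where $\chi\otimes\chi^*$ is the isotropy representation on $\operatorname{End}(E)$ with $E=(\E_\chi)_P$. Since $\chi=\rho_1\boxtimes\rho_2^*$ as a representation of $R=\GL_k(\C)\times\GL_{n-k}(\C)$, we get $\chi\otimes\chi^*=(\rho_1\otimes\rho_1^*)\boxtimes(\rho_2^*\otimes\rho_2)=(\rho_1\otimes\rho_1^*)\boxtimes(\rho_2\otimes\rho_2^*)$. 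Each tensor-square $\rho\otimes\rho^*$ of a standard representation decomposes as trivial $\oplus$ adjoint (the traceless part), so $\chi\otimes\chi^*$ decomposes into four irreducible $R$-summands: $\mathbf{1}\boxtimes\mathbf{1}$, $\mathfrak{sl}_k\boxtimes\mathbf{1}$, $\mathbf{1}\boxtimes\mathfrak{sl}_{n-k}$, and $\mathfrak{sl}_k\boxtimes\mathfrak{sl}_{n-k}$ (with suitable $\mathfrak{gl}$ versus $\mathfrak{sl}$ bookkeeping on the central torus). Then Theorem \ref{teor borel} applies since $\chi\otimes\chi^*$ is completely reducible.

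Next I would compute the highest weight of each of the four summands as an element of $\mathfrak t^*(\R)$ and test dominance against $\Delta^+=\{\mu_i-\mu_j\mid i<j\}$. The key point is the position of the indices: the $\GL_k$-block occupies coordinates $\mu_1,\dots,\mu_k$ and the $\GL_{n-k}$-block occupies $\mu_{k+1},\dots,\mu_n$. The trivial summand $\mathbf 1\boxtimes\mathbf 1$ has highest weight $0$, which is dominant, and contributes the trivial $G$-module $\C$. For the summand $\mathfrak{sl}_k\boxtimes\mathbf 1$, the highest weight of the adjoint of $\GL_k$ is $\mu_1-\mu_k$; pairing with the simple root $\alpha_k=\mu_k-\mu_{k+1}$ gives $(\mu_1-\mu_k,\mu_k-\mu_{k+1})=-1<0$, so this summand is non-dominant and contributes nothing. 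Symmetrically, $\mathbf 1\boxtimes\mathfrak{sl}_{n-k}$ has highest weight $\mu_{k+1}-\mu_n$, and pairing with $\alpha_k$ gives $(\mu_{k+1}-\mu_n,\mu_k-\mu_{k+1})=-1<0$, again non-dominant. For the last summand $\mathfrak{sl}_k\boxtimes\mathfrak{sl}_{n-k}$ the highest weight is $(\mu_1-\mu_k)+(\mu_{k+1}-\mu_n)$, whose pairing with $\alpha_k$ is $-2<0$, non-dominant as well. Hence only the trivial summand survives, so $H^0(G/P,\mathcal{E}_{\chi\otimes\chi^*})\simeq\C$, i.e. $\operatorname{End}\textsf{T}^*(M)\simeq\C$.

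From $\operatorname{End}\textsf{T}^*(M)\simeq\C$ the statement about automorphisms is immediate: a bundle endomorphism is an automorphism iff it is invertible, and since every endomorphism is a scalar multiple of the identity (by the ring isomorphism with $\C$), the invertible ones are exactly the nonzero scalars, giving $\operatorname{Aut}\textsf{T}^*(M)\simeq\C^*$, realized precisely by the homotheties $\phi_\alpha$.

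I expect the main obstacle to be purely bookkeeping: correctly normalizing the central characters of the $\GL$-factors (the map $R\to\GL(E)$ involves both a $\GL_k$ and a $\GL_{n-k}$ central torus, and one must track how $\mu_1+\dots+\mu_k$ and $\mu_{k+1}+\dots+\mu_n$ act on each summand of $\operatorname{End}(E)$) so that the highest-weight computation and the dominance test are unambiguous. A minor secondary point is the degenerate boundary behavior; the hypothesis $n-k,k>0$ excludes the trivial cases, and one should note $n\ge 2$ is implicit, but nothing in the argument breaks down as long as both blocks are nonempty, since then the simple root $\alpha_k$ separating the two blocks genuinely exists and kills the three non-trivial summands.
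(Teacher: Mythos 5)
Your proof is correct and follows the same route as the paper: identify $\operatorname{End}\textsf{T}^*(M)$ with the homogeneous bundle for $\chi\otimes\chi^*\simeq(\rho_1\otimes\rho_1^*)\otimes(\rho_2\otimes\rho_2^*)$, decompose into trivial and adjoint pieces, and apply Borel--Weil--Bott to keep only the trivial summand. You are marginally more explicit than the paper in actually pairing each nontrivial highest weight against $\alpha_k=\mu_k-\mu_{k+1}$ to verify non-dominance (the paper just lists the weights and asserts the conclusion), and you handle the boundary cases $k=1$ or $n-k=1$ uniformly by letting $\mathfrak{sl}_1=0$ rather than enumerating them; also note that your worry about central-character bookkeeping is moot, since the central tori of $\GL_k$ and $\GL_{n-k}$ act trivially on $\operatorname{End}(E)=E\otimes E^*$. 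One small mislabeling: the paper takes $\chi=\rho_1\otimes\rho_2^*$ to be the isotropy representation (so $\textsf{T}(M)=\E_\chi$ and $\textsf{T}^*(M)=\E_{\chi^*}$), not the cotangent fiber as you state; this does not affect $\operatorname{End}$.
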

 \begin{proof}
 	The cotangent bundle $\textsf{T}^*(M)$ over $M$ is homogeneous and the corresponding representation is the dual to isotropy representation $\chi$. Let us compute the representation $\omega$ of $P$ corresponding to the homogeneous bundle
 	$$
 	\operatorname{End} \textsf{T}^*(M)\simeq \textsf{T}(M) \otimes \textsf{T}^*(M).
 	$$
 	The representation $\omega$ is completely reducible and we have
 	$$
 \omega|R=	\rho_1\otimes \rho^*_2\otimes\rho_1^*\otimes \rho_2 \simeq \rho_1\otimes \rho^*_1\otimes\rho_2\otimes \rho^*_2.
 	$$
 	Therefore, we have
 	\begin{enumerate}
 		\item $\omega|R = 1+ ad_{1}+ ad_2 + ad_1\otimes ad_2$ for $k>1$ and $n-k>1$;
 		\item $1 + ad_2$ for $k=1$ and $n-k>1$;
 		\item $1 + ad_1$ for $k>1$ and $n-k=1$;
 		\item $1$ for $k=n-k=1$,
 	\end{enumerate}
  	where $1$ is the trivial one dimensional representation, $ad_1$ and $ad_2$ are adjoint representations of $\GL_k(\mathbb C)$ and $\GL_{n-k}(\mathbb C)$, respectively. Then the heights weights of the representation $\omega|R$ are
  	\begin{enumerate}
  		\item $0,$ $\mu_1-\mu_k$, $\mu_{k+1}-\mu_{n}$,   $\mu_1-\mu_k+ \mu_{k+1}-\mu_{n}$ for $k>1$ and $n-k>1$;
  		\item $0,$ $\mu_{2}-\mu_{n}$ for $k=1$ and $n-k>1$;
  		\item $0,$ $\mu_1-\mu_{n-1}$ for $k>1$ and $n-k=1$;
  		\item $0$ for $k=n-k=1$,
  	\end{enumerate}
  respectively.
 	We see that the unique dominant weight is $0$ in any case.  By Borel-Weil-Bott Theorem we obtain the result.
  \end{proof}

 \subsection{The group $H^0(M,\mathcal{A}ut \mathcal O)$}

Recall that $\mathcal M=(M,\mcO)=\Pi\!\Gr_{n,k}$ is a $\Pi$-symmetric super-Grassmannian. To compute the automorphisms of  $\mcO$ we use the following exact sequence of sheaves
\begin{equation}\label{eq exact sec sheaves 1}
e\to \mathcal{A}ut_{(2)} \mathcal O \xrightarrow[]{\iota} \mathcal{A}ut \mathcal O \xrightarrow[]{\sigma} \mathcal{A}ut (\Omega) \to e,
\end{equation}
where $\mathcal{A}ut (\Omega)$ is the sheaf of automorphisms of the sheaf of $1$-forms $\Omega$. Here the map $\iota$ is the natural inclusion and  $\sigma$ maps any $\delta:\mcO\to \mcO$ to $\sigma(\delta): \mcO/\mcJ\to \mcO/\mcJ$, where $\mcJ$  is again the sheaf of ideals generated by odd elements in $\mcO$. Consider the corresponding to (\ref{eq exact sec sheaves 1}) exact sequence of $0$-cohomology groups
\begin{equation}\label{eq exact seq automorphisms}
\{e\} \to  H^0(M, \mathcal{A}ut_{(2)} \mathcal O )\longrightarrow  H^0(M, \mathcal{A}ut \mathcal O) \longrightarrow \operatorname{Aut} \textsf{T}^*(M),
\end{equation}
and the corresponding to (\ref{eq exact sequence}) exact sequence of $0$-cohomology groups
\begin{equation}\label{eq exact seq automorphisms 3}
\{e\} \to H^0(M, \mathcal{A}ut_{(2p+2)}\mathcal{O}) \to H^0(M,\mathcal{A}ut_{(2p)}\mathcal{O}) \to H^0(M,(\mathcal T_{\gr})_{2p}),\quad p\geq 1.
\end{equation}

In \cite[Therem 4.4]{COT} it has been proven that
\begin{equation}\label{eq Oni Theorem 4.4}
H^0(M, \mathcal (\mathcal T_{\gr})_s)=\{0\}\quad \text{for}\,\,\, s\geq 2.
\end{equation}
(For $\mathcal M=\Pi\!\Gr_{2,1}$ this statement follows from dimensional reason.) Therefore,
\begin{equation}\label{eq H^0()Aut_(2)}
H^0(M, \mathcal{A}ut_{(2)} \mathcal O) =\{e\}.
\end{equation}

Recall that the automorphism $\psi^{st}_{-1}$ of the structure sheaf was defined in Corollary \ref{cor psi_-1 exists}.

 \begin{theorem}\label{theor Aut O for Pi symmetric}
 	Let $\mathcal M=\Pi\!\Gr_{n,k}$ be a $\Pi$-symmetric super-Grassmannian and $(n,k)\ne (2,1)$. Then
 	$$
 	H^0(\Gr_{n,k},\mathcal{A}ut \mathcal O) =\{id, \psi^{st}_{-1} \}.
 	$$ 	
 	For $\mathcal M=\Pi\!\Gr_{2,1}$ we have
 	$$
 	H^0(\Gr_{2,1},\mathcal{A}ut \mathcal O)\simeq \mathbb C^*.
 	$$
 \end{theorem}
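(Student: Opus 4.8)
The plan is to analyze the exact sequence of $0$-cohomology groups \eqref{eq exact seq automorphisms}, namely
$$
\{e\} \to H^0(M, \mathcal{A}ut_{(2)} \mathcal O) \to H^0(M, \mathcal{A}ut \mathcal O) \xrightarrow{\sigma} \operatorname{Aut} \textsf{T}^*(M),
$$
together with the vanishing result \eqref{eq H^0()Aut_(2)}, which gives $H^0(M, \mathcal{A}ut_{(2)}\mathcal O) = \{e\}$. Thus $\sigma$ is injective, and $H^0(M,\mathcal{A}ut\mathcal O)$ is isomorphic to its image inside $\operatorname{Aut}\textsf{T}^*(M) \simeq \mathbb C^*$ (Proposition \ref{prop automorphisms of T^*(M)}). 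So the entire problem reduces to: \emph{which homotheties $\phi_\al \in \operatorname{Aut}\textsf{T}^*(M) \simeq \mathbb C^*$ lift to an automorphism of the structure sheaf $\mcO$ of $\mcM = \Pi\!\Gr_{n,k}$?}

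First I would handle the split case $(n,k)=(2,1)$: here $\mcM$ is split by Theorem \ref{theor PiGr is splitt iff}, so $\mcO \simeq \bigwedge\Omega$ is $\Z$-graded, and every $\phi_\al$ extends to $\wedge\phi_\al$ acting on $\bigwedge\Omega$ by $\al^p$ on the $p$-th graded piece. This gives a whole $\mathbb C^*$ worth of lifts, and combined with the injectivity of $\sigma$ and the fact that the image already equals $\mathbb C^*$, we get $H^0(\Gr_{2,1},\mathcal{A}ut\mathcal O) \simeq \mathbb C^*$. For the non-split case $(n,k)\ne(2,1)$, I would invoke Corollary \ref{cor order of smf and order of al} together with Proposition \ref{prop o(PiGR)}: since $o(\mcM)=2$, a homothety $\phi_\al$ can be lifted only if $\textsf{ord}(\phi_\al) \le 2$, i.e. $\al = \pm 1$. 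The value $\al = 1$ gives $\phi_1 = \id$, whose only lift is $\id$ itself by \eqref{eq H^0()Aut_(2)}; the value $\al = -1$ gives $\phi_{-1}$, which always lifts by Corollary \ref{cor psi_-1 exists}, via the standard automorphism $\psi^{st}_{-1}$. Since $\sigma$ is injective, $\psi^{st}_{-1}$ is the unique lift of $\phi_{-1}$. Hence the image of $\sigma$ is exactly $\{1,-1\} \subset \mathbb C^*$ and $H^0(M,\mathcal{A}ut\mathcal O) = \{\id, \psi^{st}_{-1}\}$.

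**Main obstacle.** The crux is not the formal bookkeeping with the exact sequences — that is routine once \eqref{eq H^0()Aut_(2)} is in hand — but rather making sure the chain of reductions is airtight: specifically, that $\sigma$ being injective really does identify $H^0(M,\mathcal{A}ut\mathcal O)$ with a subgroup of $\mathbb C^*$ consisting precisely of the liftable homotheties, and that "liftable" in the sense of Section 4 (a lift $\psi_\al$ with $\gr(\id,\psi_\al) = (\id,\wedge\phi_\al)$) matches "in the image of $\sigma$". One must check that $\sigma(\psi) \in \operatorname{Aut}\textsf{T}^*(M) \simeq \mathbb C^*$ corresponds under Proposition \ref{prop automorphisms of T^*(M)} to the scalar $\al$ for which $\psi$ is a lift of $\phi_\al$; this is where the identification of the retract $\gr\Pi\!\Gr_{n,k} \simeq (\Gr_{n,k}, \bigwedge\Omega)$ and the description of $\mcE = \mcJ/\mcJ^2 = \Omega$ is used. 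After that, the bound $o(\mcM) = 2$ from Proposition \ref{prop o(PiGR)} does all the real work through Corollary \ref{cor order of smf and order of al}, and the existence half is immediate from Corollary \ref{cor psi_-1 exists}.
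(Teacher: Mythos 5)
Your proof follows the same path as the paper: reduce via the exact sequence \eqref{eq exact seq automorphisms} and the vanishing \eqref{eq H^0()Aut_(2)} to the question of which $\phi_\alpha\in\operatorname{Aut}\textsf{T}^*(M)\simeq\mathbb C^*$ lift, then settle it with Proposition \ref{prop o(PiGR)}, Corollary \ref{cor order of smf and order of al}, and Corollary \ref{cor psi_-1 exists}. The only cosmetic difference is in the $(2,1)$ case, where the paper observes $\dim\mcM=(1|1)$ forces $\mathcal{A}ut_{(2)}\mathcal O=\id$ while you invoke splitness directly — these amount to the same thing.
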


 \begin{proof}
  	From (\ref{eq exact seq automorphisms}), (\ref{eq H^0()Aut_(2)}) and Proposition \ref{prop automorphisms of T^*(M)}, it follows that
 	$$
 	\{e\} \to  H^0(M, \mathcal{A}ut \mathcal O) \to  \{\phi_{\alpha}\,\,|\,\, \al\in \mathbb C^* \}\simeq \mathbb C^*.
 	$$
 	Now the statement follows from Proposition \ref{prop o(PiGR)} and Corollary \ref{cor order of smf and order of al}. In more details, for $(n,k)\ne (2,1)$, we have $o(\mcM) =2$, therefore
 	$\phi_{\alpha}$ can be lifted to $\mcM$ if and only if $\ord(\phi_{\alpha})=1$ or $2$. In other words, $\al=\pm 1$.

 	In the case $\mathcal M=\Pi\!\Gr_{2,1}$, we have $\dim \mcM = (1|1)$. Therefore, 	$\mathcal{A}ut_{(2)} \mathcal O = id$ and any $\phi_{\alpha}$ can be lifted to $\mcM$.  The proof is complete.
 \end{proof}

We finish this section with the following theorem.

 \begin{theorem}\label{theor Aut gr O for Pi symmetric}
	Let $\gr \mathcal M=(M,\gr \mcO)=\gr \Pi\!\Gr_{n,k}$, where $\Pi\!\Gr_{n,k}$ is a $\Pi$-symmetric super-Grassmannian. Then
	$$
	H^0(\Gr_{n,k},\mathcal{A}ut (\gr \mathcal O))= \operatorname{Aut} \textsf{T}^*(M) \simeq \mathbb C^*.
	$$ 	
\end{theorem}

\begin{proof}
	In Sequence \ref{eq exact seq automorphisms 3} we can replace $\mcO$ by $\gr \mcO$. (This sequence is exact for any $\mcO'$ such that $\gr\mcO'\simeq \gr \mcO$.) By (\ref{eq Oni Theorem 4.4}) as above we get
	$$
	H^0(M, \mathcal{A}ut_{(2)} (\gr\mathcal O)) =\{e\}.
	$$
	By (\ref{eq exact seq automorphisms}) we have
	$$
	\{e\} \to   H^0(M, \mathcal{A}ut (\gr \mathcal O)) \longrightarrow \operatorname{Aut} \textsf{T}^*(M)\simeq \mathbb C^*.
	$$
	Hence any automorphism from $\operatorname{Aut} \textsf{T}^*(M)$ induces an automorphism of $\gr \mathcal O$, we obtain the result.
\end{proof}

 \section{The automorphism supergroup $\operatorname{Aut}\Pi\!\Gr_{n,k}$ of a $\Pi$-symmetric super-Grassmannian}

\subsection{The automorphism group of  $\Gr_{n,k}$}\label{sec The automorphism group of  Gr}

The following theorem can be found for example in \cite[Chapter 3.3, Theorem 1, Corollary 2]{ADima}.

\begin{theorem}\label{theor autom group of usual grassmannian}
	The automorphism group $\operatorname{Aut} (\Gr_{n,k})$ is isomorphic to $\PGL_n(\mathbb C)$ if $n\ne 2k$ and if $(n,k)=(2,1)$; and $\PGL_n(\mathbb C)$  is a normal subgroup of index $2$ in $\operatorname{Aut} (\Gr_{n,k})$  for $n=2k$, $k\ne 1$.
	
		More precisely in the case $n=2k\geq 4$ we have
	$$
	\operatorname{Aut} (\Gr_{2k,k}) = \PGL_n(\mathbb C) \rtimes \{\id, \Phi \},
	$$
	where $\Phi^2 =\id$ and $\Phi\circ g\circ \Phi^{-1} = (g^t)^{-1}$ for $g\in \PGL_n(\mathbb C)$.
\end{theorem}

An additional automorphism $\Phi$ can be described geometrically.  (Note that an additional automorphism is not unique.)
 It is well-known that $\Gr_{n,k}\simeq \Gr_{n,n-k}$ and this isomorphism is given by $\Gr_{n,k}  \ni V \mapsto V^{\perp} \in \Gr_{n,n-k}$, where $V^{\perp}$ is the orthogonal complement of $V\subset \mathbb C^n$ with respect to a bilinear form $B$.   In the case $n=2k$ we clearly have $\Gr_{n,k} = \Gr_{n,n-k}$, hence the map $V \mapsto V^{\perp}$ induces an automorphism of $\Gr_{2k,k}$, which we denote by $\Phi_B$. This automorphism is not an element of $\PGL_n(\mathbb C)$ for $(n,k)\ne (2,1)$.

  Assume that $B$ is the symmetric bilinear form, given in the standard basis of  $\mathbb C^n$ by the identity matrix. Denote the corresponding automorphism by $\Phi$.
     Let us describe $\Phi$ in the standard coordinates on $\Gr_{2k,k}$, given in Section \ref{sec def of a supergrassmannian}. Recall that the chart $U_I$ on $\Gr_{2k,k}$, where $I=\{k+1, \ldots, 2k\}$, corresponds to the following matrix
 $
 \left(\begin{array}{c}
 X\\
 E\\
\end{array}
 \right),
 $
where $X$ is a $k\times k$-matrix of local coordinates  and $E$ is the identity matrix. We have
$$
\left(\begin{array}{c}
X\\
E\\
\end{array}
\right) \xrightarrow{\Phi}
\left(\begin{array}{c}
E\\
-X^t\\
\end{array}
\right),
$$
since
$$
\left(\begin{array}{c}
	E\\
	-X^t\\
\end{array}
\right)^t \cdot
\left(
\begin{array}{c}
X\\
E\\
\end{array}
\right) = \left(\begin{array}{cc}
E& -X\\
\end{array}
\right) \cdot
\left(
\begin{array}{c}
X\\
E\\
\end{array}
\right) =0.
$$
More general, let $U_I$, where $|I|= k$, be another chart on  $\Gr_{2k,k}$ with coordinates $(x_{ij})$, $i,j=1,\ldots, k$, as described in Section \ref{sec def of a supergrassmannian}. Denote $J:= \{ 1,\ldots, 2k\}\setminus I$. Then $U_J$ is again a chart on  $\Gr_{2k,k}$ with coordinates $(y_{ij})$, $i,j=1,\ldots, k$. Then the automorphism  $\Phi$ is given by $y_{ij} = -x_{ji}$.

\begin{remark}
	In case $(n,k)= (2,1)$ the automorphism $\Phi$ described above is defined as well, however it coincides with the following automorphism from $\PGL_2(\mathbb C)$
	\begin{align*}
	\left(\begin{array}{cc}
	0&1\\
	-1&0\\
	\end{array}
	\right)\cdot
	\left(\begin{array}{c}
	x\\
	1\\
	\end{array}
	\right) = \left(\begin{array}{c}
	1\\
	-x\\
	\end{array}
	\right) = \left(\begin{array}{c}
	1\\
	-x^t\\
	\end{array}
	\right).
	\end{align*}
	The same in another chart.
\end{remark}

Let us discuss properties of $\Phi$ mentioned in Theorem \ref{theor autom group of usual grassmannian}.
Clearly $\Phi^2 = \id$. Further, for $g\in \PGL_n(\mathbb C)$ we have
\begin{align*}
\left[(g^t)^{-1}\cdot \left(\begin{array}{c}
E\\
-X^t\\
\end{array}
\right)\right]^t \cdot \left[g \cdot
\left(
\begin{array}{c}
X\\
E\\
\end{array}
\right)\right] = \left(\begin{array}{cc}
E& -X\\
\end{array}
\right) \cdot g^{-1}\cdot g \cdot
\left(
\begin{array}{c}
X\\
E\\
\end{array}
\right) =0.
\end{align*}
(In other charts $U_I$ the argument is the same.) In other words, if $V\subset \mathbb C^{2k}$ is a linear subspace of dimension $k$, then $(g \cdot V)^{\perp} = (g^t)^{-1} \cdot  V^{\perp}$. Hence,
\begin{align*}
V \xmapsto[]{\Phi^{-1}}V^{\perp}  \xmapsto[]{\text{\,\,\,} g\text{\,\,\,} } g\cdot V^{\perp} \xmapsto[]{\text{\,\,}\Phi\text{\,\,}} (g^t)^{-1} \cdot  V.
\end{align*}
Therefore, $\Phi\circ g\circ \Phi^{-1} = (g^t)^{-1}$.

\subsection{About lifting of the automorphism $\Phi$}\label{sec lifting of exeptional hom}
\subsubsection{Lifting of the automorphism $\Phi$ to $\gr \Pi\!\Gr_{2k,k}$}
Recall that we have
$$
\gr \Pi\!\Gr_{n,k}\simeq (\Gr_{n,k}, \bigwedge \Omega),
$$ where  $\Omega$ is the sheaf of $1$-forms on $\Gr_{n,k}$. Therefore any automorphism of $\Gr_{n,k}$ can be naturally lifted to $\gr \mcM=\gr \Pi\!\Gr_{n,k}$. Indeed, the lift of an automorphism $F$ of $\Gr_{n,k}$ is the automorphism $(F,\wedge \operatorname{d}  (F))$ of $(\Gr_{n,k}, \bigwedge \Omega)$.
Further, by Theorem \ref{theor Aut gr O for Pi symmetric} we have
$$
\{e\} \to   H^0(M, \mathcal{A}ut (\gr \mathcal O)) \simeq \mathbb C^* \longrightarrow \operatorname{Aut}( \gr \mcM) \longrightarrow  \operatorname{Aut} (\Gr_{n,k}).
$$
Hence,
$$
\operatorname{Aut} (\gr \mcM )\simeq \mathbb C^* \rtimes \operatorname{Aut} (\Gr_{n,k}) .
$$
Now we see that $\operatorname{Aut} (\gr \mcM )$ is isomorphic to the group of all automorphisms  $\underline{\operatorname{Aut}} \textsf{T}^*(M)$ of $\textsf{T}^*(M)$.  An automorphism $\phi_{\al} \in \mathbb C^*$ commutes with any $(F,\wedge \operatorname{d}  (F))\in \operatorname{Aut} (\Gr_{n,k})$. Hence we obtain the following result.

\begin{theorem}\label{theor aut gr mcM}
If $\gr\mathcal M = \gr \Pi\!\Gr_{n,k}$, then
$$
\operatorname{Aut} (\gr \mcM )\simeq \underline{\operatorname{Aut}} \textsf{T}^*(M)\simeq  \operatorname{Aut} (\Gr_{n,k})\times \mathbb C^*.
$$	
In other words, any automorphism of  $\gr \mcM $ is induced by an automorphism of $\textsf{T}^*(M)$.
	
More precisely,

	{\bf (1)}	If $\gr\mathcal M = \gr \Pi\!\Gr_{2k,k}$, where $k\geq 2$, then
$$
	\operatorname{Aut} (\gr\mathcal M)\simeq  (\PGL_{2k}(\mathbb C) \rtimes \{\id, (\Phi, \wedge d(\Phi)) \})\times \mathbb C^*,
	$$
	where $(\Phi, \wedge d(\Phi)) \circ g\circ (\Phi, \wedge d(\Phi))^{-1} = (g^t)^{-1}$ for $g\in  \PGL_{2k}(\mathbb C)$.
	
		{\bf (2)} For other $(n,k)$, we have 
	$$
	\operatorname{Aut} (\gr\mathcal M)\simeq \PGL_n(\mathbb C) \times \mathbb C^*.
	$$
\end{theorem}

\begin{corollary}
		We see, Theorem \ref{theor aut gr mcM}, that any lift
	of the automorphism $\Phi$ to $\gr \Pi\!\Gr_{2k,k}$
	has the following form
	$$
	\phi_{\al} \circ (\Phi, \wedge d(\Phi)),\quad \al\in \mathbb C^*.
	$$
\end{corollary}

\subsubsection{An explicit construction of lifts of the automorphism $\Phi$ to $\gr \Pi\!\Gr_{2k,k}$}\label{sec explicit Phi}
 In Section \ref{sec charts on Gr}
we constructed the atlas $\mathcal A^{\Pi}=\{\mathcal U_I^{\Pi}\}$ on $\Pi\!\Gr_{n,k}$. Therefore,  $\gr\mathcal A^{\Pi}:=\{\gr\mathcal U_I^{\Pi}\}$ is an atlas on $\gr \Pi\!\Gr_{n,k}$. For the sake of completeness, we describe a lift $(\Phi, \wedge d(\Phi))$ of $\Phi$ from Section \ref{sec The automorphism group of  Gr} in our local charts.

 First  consider the following two coordinate matrices, see Section \ref{sec charts on Gr}:
\begin{equation}\label{eq two standard charts}
\mathcal Z_{1}=
\left(\begin{array}{cc}
X&\Xi \\
E&0\\
\Xi& X\\
0&E
\end{array}
\right),
\quad
\mathcal Z_{2} = \left(\begin{array}{cc}
E&0 \\
Y&H\\
0& E\\
H&Y
\end{array}
\right),
\end{equation}
where $X = (x_{ij})$, $Y= (y_{ij})$ are $k\times k$-matrices of local even coordinates and $\Xi = (\xi_{st})$, $H = (\eta_{st})$ are $k\times k$-matrices of local odd coordinates on $\Pi\!\Gr_{2k,k}$. Denote by $\mathcal V_i\in \mathcal A^{\Pi}$ the corresponding to $\mathcal Z_i$ superdomain. Then $\gr \mathcal V_1$ and $\gr \mathcal V_2$ are superdomains in $\gr\mathcal A^{\Pi}$ with coordinates $(\gr (x_{ij}), \gr (\xi_{st}))$ and $(\gr (y_{ij}), \gr (\eta_{st}))$, respectively. (Note that we can consider any superfunction $f$ as a morphism between supermanifolds, therefore $\gr f$ is defined.)

We can easily check that the coordinate $\gr (\xi_{ij})$ (or $\gr (\eta_{ij})$) can be identified with the $1$-form $d(\gr(x_{ij}))$ (or $d(\gr(y_{ij}))$, respectively) for any $(ij)$. Using this fact we can describe the automorphism $(\Phi, \wedge d(\Phi))$ on $\gr \Pi\!\Gr_{n,k}$. We get in our local charts
\begin{equation*}
\left(\begin{array}{cc}
\gr X& \gr \Xi \\
E&0\\
\gr \Xi&\gr X\\
0&E
\end{array}
\right)
 \xrightarrow{(\Phi, \wedge d(\Phi))}
 \left(\begin{array}{cc}
 E&0\\
 -\gr X^t& -\gr  \Xi^t \\
 0&E\\
 - \gr \Xi^t& - \gr X^t\\
 \end{array}
 \right).
\end{equation*}
We can describe the automorphism $(\Phi, \wedge d(\Phi))$ in any other charts of $\gr\mathcal A^{\Pi}$ in a similar way. Clearly, $(\Phi, \wedge d(\Phi))\circ (\Phi, \wedge d(\Phi)) =id$.

\subsubsection{About lifting of the automorphism $\Phi$ to
	$\Pi\!\Gr_{2k,k}$}

In this subsection we use results obtained in \cite{COT}. Recall that $\Omega$ is the sheaf of $1$-forms on $\Gr_{n,k}$ and $\mathcal T_{\gr} = \bigoplus_{p\in \Z} (\mathcal T_{\gr})_p$ is the tangent sheaf of $\gr\Pi\!\Gr_{n,k}$. We have the following isomorphism
$$
(\mathcal T_{\gr})_2 \simeq \bigwedge^3 \Omega\otimes \Omega^* \oplus \bigwedge^2 \Omega\otimes \Omega^*.
$$
see \cite[Formula 2.13]{COT}. (This isomorphism holds for any supermanifold with the retract $(M,\bigwedge \Omega)$). Therefore,
\begin{equation}\label{eq H^1-1}
H^1(\Gr_{n,k},(\mathcal T_{\gr})_2) \simeq H^1(\Gr_{n,k},\bigwedge^3 \Omega\otimes \Omega^*) \oplus H^1(\Gr_{n,k},\bigwedge^2 \Omega\otimes \Omega^*).
\end{equation}
By \cite[Proposition 4.10]{COT} we have
\begin{equation}\label{eq H^1-2}
H^1(\Gr_{n,k},\bigwedge^3 \Omega\otimes \Omega^*) =\{0\}.
\end{equation}
Further by Dolbeault-Serre theorem we have
\begin{equation}\label{eq H^1-3}
H^1(\Gr_{n,k}, \bigwedge^2 \Omega\otimes \Omega^*) \simeq H^{2,1} (\Gr_{n,k}, \Omega^*).
\end{equation}
Combining Formulas (\ref{eq H^1-1}), (\ref{eq H^1-2}) and (\ref{eq H^1-3}) we get
\begin{equation*}
H^1(\Gr_{n,k},(\mathcal T_{\gr})_2) \simeq H^{2,1} (\Gr_{n,k}, \Omega^*).
\end{equation*}

Consider the exact sequence (\ref{eq exact sequence}) for the sheaf $\gr \mcO$
$$
e \to \mathcal{A}ut_{(2p+2)}\gr\mathcal{O} \to \mathcal{A}ut_{(2p)}\gr\mathcal{O} \to (\mathcal T_{\gr})_{2p}\to 0.
$$
Since $H^1(M, (\mathcal T_{\gr})_{p})=\{0\}$ for $p\geq 3$, see \cite[Theorem 4.4]{COT}, we have $$H^1(\Gr_{n,k},\mathcal{A}ut_{(2p)} \gr\mathcal{O}) = \{\epsilon\}\quad \text{for}\,\,\, p\geq 2.
$$
Hence we have the following inclusion
\begin{equation}\label{eq inclusion}
H^1(\Gr_{n,k}, \mathcal{A}ut_{(2)}\gr\mathcal{O})  \hookrightarrow H^1(\Gr_{n,k}, (\mathcal T_{\gr})_2)\simeq H^{2,1} (\Gr_{n,k}, \Omega^*).
\end{equation}

Let $\gamma\in H^1(\Gr_{2k,k}, \mathcal{A}ut_{(2)}\gr\mathcal{O})$ be the Green cohomology class of the supermanifold $\Pi\!\Gr_{2k,k}$, see Theorem \ref{Theor_Green}.
Denote by $\eta$ the image of $\gamma$ in $H^{2,1} (\Gr_{2k,k}, \Omega^*)$. (The notation $\eta$ we borrow in \cite{COT}.)

\begin{theorem}\label{theor lift of Phi}
 The automorphism $\phi_{\al} \circ (\Phi, \wedge d(\Phi))$, where $\al\in \mathbb C^*$, can be lifted to
  $\Pi\!\Gr_{2k,k}$, where $k\geq 2$, if and only if $\al = \pm i$.

  The $\Pi$-symmetric super-Grassmannian $\Pi\!\Gr_{2,1}$ is split, in other words,  $\Pi\!\Gr_{2,1}\simeq \gr  \Pi\!\Gr_{2,1}$. Therefore any $\phi_{\al} \circ (\Phi, \wedge d(\Phi))$ is an automorphism of $\Pi\!\Gr_{2,1}$.
\end{theorem}

\begin{proof}
	This statement can be deduced from results of \cite{COT}. Indeed, by \cite[Theorem 5.2 (1)]{COT},  $\Pi\!\Gr_{2k,k}$, where  $k\geq 2$, corresponds to the $(2; 1)$-form $\eta \neq0$ defined by \cite[Formula (4.19)]{COT}. Further, the inclusion (\ref{eq inclusion}) is $\underline {\operatorname{Aut}} \textsf{T}^*(M)$-invariant. In \cite[Lemma 3.2]{COT} it was shown that $\phi_{\al}  (\eta) = \al^2 \eta$ and in the proof of \cite[Theorem 4.6 (2)]{COT} it was shown that $(\Phi, \wedge d(\Phi))(\eta) = -\eta \in H^{2,1} (\Gr_{n,k}, \Omega^*)$.
		This implies that
		$$
		[\phi_{\al} \circ (\Phi, \wedge d(\Phi))] (\eta) =\eta
		$$
		if and only if $\al = \pm i$.
	Hence,	
		$$
	[\phi_{\al} \circ (\Phi, \wedge d(\Phi))](\gamma) =\gamma\in H^1(M, \mathcal{A}ut_{(2)}\gr\mathcal{O})
		$$
			if and only if $\al = \pm i$.
		Hence by Proposition \ref{prop lift of gamma} we obtain the result.
\end{proof}

\subsubsection{A geometric construction of a lift of $\Phi$ to 	$\Pi\!\Gr_{2k,k}$}\label{sec construction of Theta}

Together with the supermanifold $\Pi\!\Gr_{n,k}$ we can consider a $\Pi^L$-symmetric super-Grassmannian $\Pi\!\Gr^L_{n,k}$. A construction of $\Pi\!\Gr^L_{n,k}$ is similar to the construction of $\Pi\!\Gr_{n,k}$ given in Section \ref{sec def of a supergrassmannian}. The difference is that we write even and odd coordinates in rows, not columns. For example a coordinate matrix $\mathcal Z_{I}$, where $I=\{1,\ldots, k\}$, of  $\Pi\!\Gr^L_{n,k}$ has the following form
\begin{align*}
\left(\begin{array}{cccc}
E& X & 0 & \Xi\\
0&\Xi & E& X\\
\end{array}
\right)
\end{align*}
The supermanifold  $\Pi\!\Gr^L_{n,k}$ is a super-Grassmannian of $\Pi^L$-symmetric $k$-dimensional superspaces in $\C^{n|n}$, where $\Pi^L$ is an odd involution, which is linear with respect to left coordinates in $\C^{n|n}$. (If we write our involution $\Pi$ in left coordinates, it will be superlinear, not linear, see \cite{Leites}.)

In $\C^{2k|2k}$ we can consider  a bilinear form given  in the standard basis by the identity matrix of size $4k$. This form is not super-symmetric. The form induces a map $\Pi\!\Gr_{2k,k} \to \Pi\!\Gr^L_{2k,k}$ given in charts by
\begin{equation}\label{eq Z to Z perp}
\left(\begin{array}{cc}
 X& \Xi \\
E&0\\
\Xi&X\\
0&E
\end{array}
\right)  \longrightarrow
\left(\begin{array}{cccc}
E& -X & 0 & -\Xi\\
0&-\Xi & E& -X\\
\end{array}
\right)
\end{equation}
For other charts, let $\mathcal U_I$, where $|I|= k$, be another chart on  $\Pi\Gr_{2k,k}$ with coordinates $(x_{ij}, \xi_{st})$, $i,j,s,t=1,\ldots, k$, as described in Section \ref{sec def of a supergrassmannian}. Denote $J:= \{ 1,\ldots, 2k\}\setminus I$. Then $\mathcal U^L_J$ is a chart on  $\Pi\Gr^L_{2k,k}$ with coordinates $(y_{ij},\eta_{st})$, $i,j,s,t=1,\ldots, k$. Then our map is given by $y_{ij} = -x_{ij}$, $\eta_{st} = -\xi_{st}$. Since,
\begin{align*}
\left(\begin{array}{cccc}
E& -X & 0 & -\Xi\\
0&-\Xi & E& -X\\
\end{array}
\right) \cdot \left(\begin{array}{cc}
X& \Xi \\
E&0\\
\Xi&X\\
0&E
\end{array}
\right) = 0.
\end{align*}
and the same for coordinate matrices of charts $\mathcal U_I$ and $\mathcal U^L_J$, $J:= \{ 1,\ldots, 2k\}\setminus I$, we see that the map is independent on the choice of a chart.

Further, for $\Pi$-symmetric even matrices an $i$-transposition is defined, see (\ref{eq i transposition}),
$$
\left(\begin{array}{cc}
A & B\\
B & A\\
\end{array}
\right)^{t_i} = \left(\begin{array}{cc}
A^t & iB^t\\
iB^t & A^t\\
\end{array}
\right),
$$
which satisfies $(C_1\cdot C_2)^{t_i} = C_2^{t_i} \cdot C_1^{t_i}$. (Here $i B^t$ is the matrix $B^t$, transpose to the matrix $B$, multiplied by the complex number $i=\sqrt{-1}$.) Now we can define an automorphism of  $\Pi\!\Gr_{n,k}$ by
\begin{align*}
\left(\begin{array}{cc}
X& \Xi \\
E&0\\
\Xi&X\\
0&E
\end{array}
\right)  \longrightarrow
\left(\begin{array}{cccc}
E& -X & 0 & -\Xi\\
0&-\Xi & E& -X\\
\end{array}
\right) \xrightarrow{{t_i}}
\left(\begin{array}{cc}
E&0\\
-X^t& -i\Xi^t \\
0&E\\
-i\Xi^t&-X^t\\
\end{array}
\right).
\end{align*}
In charts $\mathcal Z_I \to \mathcal Z_J$ on $\Pi\!\Gr_{n,k}$, where $J:= \{ 1,\ldots, 2k\}\setminus I$, this map is given by $y_{ij} = -x_{ji}$, $\eta_{st} = -i\xi_{ts}$. In notations of Theorem \ref{theor lift of Phi}, the obtained automorphism is the (unique) lift of $\phi_{i} \circ (\Phi, \wedge d(\Phi))$ for $(n,k)\ne (2,1)$ and obtained automorphism is equal to $\phi_{i} \circ (\Phi, \wedge d(\Phi))$ for $(n,k)= (2,1)$.

We denote the automorphism described in this section by $\Theta$.

\subsection{The automorphism supergroup of  $\Pi\!\Gr_{n,k}$}

As above we denote by $\mathfrak v(\mathcal M) := H^0(M, \mathcal T)$ the Lie superalgebra of vector fields on a supermanifold $\mcM$. In this section $\mcM= \Pi\!\Gr_{n,k}$. Recall that $\mathfrak{q}_{n}(\mathbb C)$ is a strange Lie superalgebra, see \cite{Kac} for details. This Lie superalgebra contains all  matrices over $\C$ in the following form
$$
\left(\begin{array}{cc}
A&B \\
B&A\\
\end{array}
\right),
$$
where $A,B$ are complex square matrix of size $n$. For instance the identity matrix $E_{2n}$ of size $2n$ is an element of $\mathfrak{q}_{n}(\mathbb C)$. The corresponding Lie supergroup we denote by $\Q_n(\C)$. This is a subsupergroup in $\GL_{n|n}(\C)$ which is invariant with respect to the odd involution $\Pi$. This Lie supergroup can be seen as a subseperdomain of the following form in $\GL_{n|n}(\C)$.
\begin{align*}
\left(\begin{array}{cc}
L&M \\
M&L\\
\end{array}
\right),
\end{align*}
where $L$ is an $n\times n$-matrix of even coordinates, while $M$ is an $n\times n$-matrix of odd coordinates. The corresponding to $\Q_n(\C)$  Harish-Chandra pair is $(\GL_{n}(\C),\mathfrak{q}_{n}(\mathbb C))$.

In \cite{COT} the following theorem was proved, see  \cite[Theorem 5.2]{COT}, \cite[Theorem 4.2]{COT} and \cite[Section 5]{Vish_Pi sym} for an explicit description of $\mathfrak v(\Pi\!\Gr_{2,1})$.

\begin{theorem}\label{teor vector fields on supergrassmannians}  {\bf (1)} If $\mathcal M = \Pi\!\Gr_{n,k}$, where $(n,k)\ne (2,1)$, then
	$$
	\mathfrak v(\mathcal M)\simeq \mathfrak{q}_{n}(\mathbb C)/\langle E_{2n}\rangle.
	$$
	
{\bf (2)}	If $\mathcal M = \Pi\!\Gr_{2,1}$, then
	$$
	\mathfrak v(\mathcal M)\simeq
	\mathfrak g \rtimes \langle z\rangle  \simeq \mathfrak{q}_{2}(\mathbb C)/\langle E_{4}\rangle \rtimes \langle z\rangle.
	$$
	Here $\mathfrak g = \mathfrak g_{-1}\oplus \mathfrak g_0\oplus \mathfrak g_{1}$ is a $\mathbb Z$-graded Lie superalgebra defined in the following way.
	$$
	\mathfrak g_{-1}= V, \quad \mathfrak g_{0}= \mathfrak{sl}_2(\mathbb C), \quad  \mathfrak g_{1}= \langle d \rangle,
	$$
	where $V= \mathfrak{sl}_2(\mathbb C)$ is the adjoint $\mathfrak{sl}_2(\mathbb C)$-module, $[\mathfrak g_{0}, \mathfrak g_{1}] = \{0\}$, $[d , -]$ maps identically $\mathfrak g_{-1}$ to $\mathfrak g_{0}$, and $z$ is the grading operator of the $\mathbb Z$-graded Lie superalgebra $\mathfrak{g}$, the element $d$ corresponds to the matrix $\left(\begin{array}{cc}
	0&E \\
	E&0\\
	\end{array}
	\right) \in \mathfrak{q}_{2}(\mathbb C)$.
	\end{theorem}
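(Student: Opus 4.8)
The plan is to squeeze $\mathfrak v(\mcM)$ between a Lie superalgebra coming from a group action (the lower bound) and the cohomology of the retract (the upper bound), which Borel--Weil--Bott makes accessible. For \emph{part (1)} I would first construct an injective homomorphism $\iota\colon\mathfrak{q}_n(\mathbb C)/\langle E_{2n}\rangle\hookrightarrow\mathfrak v(\mcM)$. The complex Lie supergroup $Q_n(\mathbb C)$ with $\operatorname{Lie}Q_n(\mathbb C)=\mathfrak{q}_n(\mathbb C)$ sits inside $\GL_{n|n}(\mathbb C)$, and its natural action on $\Gr_{n|n,k|k}$ preserves the relations $X'=Y'$, $\Xi'=H'$ cutting out $\PiG_{n,k}$ (Section \ref{sec charts on Gr}); hence $Q_n(\mathbb C)$ acts on $\mcM$, and since the even central element $E_{2n}$ acts trivially, differentiation yields $\iota$. (Alternatively one may simply write the corresponding vector fields out in the charts $\mathcal Z_I$.) Injectivity of $\iota$ reduces to checking that no nonzero element induces the zero vector field: the even part $\mathfrak{pgl}_n$ acts effectively on the base $\Gr_{n,k}$ since $0<k<n$, so the kernel would be a purely odd ideal, which one eliminates by a brief direct computation in the charts $\mathcal Z_I$.

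For the reverse inequality I would bound $\dim\mathfrak v(\mcM)$ from above using the retract $\gr\PiG_{n,k}\simeq(\Gr_{n,k},\bigwedge\Omega)$ and the filtration $\mathcal T=\mathcal T_{(-1)}\supset\mathcal T_{(0)}\supset\cdots$ of the tangent sheaf. Passing to global sections of the associated graded $\mathcal T_{\gr}=\bigoplus_{p\ge-1}(\mathcal T_{\gr})_p$ and using $H^0(\Gr_{n,k},(\mathcal T_{\gr})_p)=0$ for $p\ge2$ (\cite[Theorem 4.4]{COT}), one gets that $\mathfrak v(\mcM)_{\bar1}$ is filtered with graded quotients embedding into $H^0((\mathcal T_{\gr})_{-1})$ and $H^0((\mathcal T_{\gr})_1)$, while the reduction $\mathfrak v(\mcM)_{\bar0}\to H^0(\Gr_{n,k},\Omega^*)$ has image and kernel controlled by $H^0((\mathcal T_{\gr})_0)$. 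From $(\mathcal T_{\gr})_{-1}=\Omega^*$, the extensions $0\to\Omega^*\otimes\Omega\to(\mathcal T_{\gr})_0\to\Omega^*\to0$ and $0\to\Omega^*\otimes\bigwedge^2\Omega\to(\mathcal T_{\gr})_1\to\Omega^*\otimes\Omega\to0$, and the Borel--Weil--Bott computations (Theorem \ref{teor borel}) $H^0(\Omega^*)\cong\mathfrak{sl}_n$ (the vector fields on $\Gr_{n,k}$, i.e.\ $\operatorname{Lie}\PGL_n$), $H^0(\Omega^*\otimes\Omega)\cong\mathbb C$ (Proposition \ref{prop automorphisms of T^*(M)}) and $H^0(\Omega^*\otimes\bigwedge^2\Omega)=0$, one obtains $\dim H^0((\mathcal T_{\gr})_{-1})=n^2-1$, $\dim H^0((\mathcal T_{\gr})_1)=1$, $\dim H^0((\mathcal T_{\gr})_0)=n^2$; in particular $\dim\mathfrak v(\mcM)_{\bar1}\le n^2$.

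The decisive point — and the one I expect to be the main obstacle — is that the extra generator of $H^0((\mathcal T_{\gr})_0)$, namely the Euler (grading) field $z$ with $z(x_a)=0$, $z(\xi_b)=\xi_b$, does \emph{not} lift to $\mcM$ when $(n,k)\ne(2,1)$: a lift $\tilde z\in\mathfrak v(\mcM)_{\bar0}$ would integrate to a one-parameter group of automorphisms of $\mcM$ lifting the homotheties $\phi_{e^t}$ of $\E$, whereas by Proposition \ref{prop o(PiGR)} the order of $\PiG_{n,k}$ is $2$, so by Corollary \ref{cor order of smf and order of al} only $\phi_{\pm1}$ lift. Consequently the reduction $\mathfrak v(\mcM)_{\bar0}\to H^0(\Gr_{n,k},\Omega^*)\cong\mathfrak{sl}_n$ is injective (a nonzero kernel element would be precisely such a $\tilde z$) and surjective (the genuine $\PGL_n$-action of the first step), so $\dim\mathfrak v(\mcM)_{\bar0}=n^2-1$. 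Combining, $\dim\mathfrak v(\mcM)\le(n^2-1)+n^2=2n^2-1=\dim\mathfrak{q}_n(\mathbb C)/\langle E_{2n}\rangle$, so $\iota$ is onto, an isomorphism of Lie superalgebras. In spectral-sequence terms this last step says that the relevant differential out of $H^0((\mathcal T_{\gr})_0)$, determined by the Green cohomology class of $\PiG_{n,k}$, is nonzero — which is exactly where non-splitness is used, and where the argument breaks for $(2,1)$.

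For \emph{part (2)} the supermanifold $\PiG_{2,1}$ is split by Theorem \ref{theor PiGr is splitt iff}, with retract $(\mathbb P^1,\bigwedge\Omega)$ and $\Omega\cong\mathcal O(-2)$, so $\mathfrak v(\mcM)=\bigoplus_{p=-1}^{1}H^0(\mathbb P^1,(\mathcal T_{\gr})_p)$ carries its natural $\mathbb Z$-grading and the computation is explicit. One has $(\mathcal T_{\gr})_{-1}=\Omega^*\cong\mathcal O(2)$, whose global sections form the $3$-dimensional irreducible (adjoint) $\mathfrak{sl}_2$-module $V=\mathfrak g_{-1}$; $(\mathcal T_{\gr})_0$ is the extension of $\mathcal O(2)$ by $\Omega^*\otimes\Omega\cong\mathcal O$, giving $H^0=\mathfrak{sl}_2\oplus\mathbb C z$ with $z$ central (and now $z$ \emph{does} lift, since $o(\mcM)=\infty$); and $(\mathcal T_{\gr})_1\cong\Omega^*\otimes\Omega\cong\mathcal O$, giving $H^0=\mathbb C d=\mathfrak g_1$, the trivial $\mathfrak{sl}_2$-module. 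It then remains to read off the brackets: $[\mathfrak g_0,\mathfrak g_1]=0$ because $\mathfrak{sl}_2$ acts trivially on $H^0(\mathcal O)$, the map $[d,-]\colon\mathfrak g_{-1}\to\mathfrak g_0$ is a nonzero — hence, by Schur's lemma, bijective — morphism of $\mathfrak{sl}_2$-modules, and $[z,-]$ is multiplication by the degree; this identifies $\mathfrak v(\mcM)$ with $\mathfrak g\rtimes\langle z\rangle$.
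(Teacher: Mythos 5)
The paper itself does \emph{not} prove this theorem: immediately before the statement it says ``In \cite{COT} the following theorem was proved, see \cite[Theorem 5.2]{COT} and \cite[Theorem 4.2]{COT}'', so the theorem is quoted from Onishchik's paper on non-split supermanifolds associated with the cotangent bundle. Your proposal is therefore an independent reconstruction, and it cannot be checked against a proof inside the present text. Judged on its own merits, the overall strategy — a lower bound coming from the natural $Q_n(\mathbb C)$-action and an upper bound coming from the filtration $\mathcal T\supset\mathcal T_{(0)}\supset\mathcal T_{(1)}\supset\cdots$ together with the vanishing $H^0(\Gr_{n,k},(\mathcal T_{\gr})_p)=0$ for $p\ge 2$ and with the non-liftability of the Euler field $z$ — is sound and does match the cohomological machinery the paper develops in Sections \ref{sec A classification theorem}--\ref{sec Order of a supermanifold} and uses elsewhere (Proposition \ref{prop o(PiGR)}, Corollary \ref{cor order of smf and order of al}, Theorem \ref{theor Aut O for Pi symmetric}). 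In particular, identifying ``$z$ does not lift'' with the fact that $\phi_\alpha$ lifts to $\PiG_{n,k}$ only for $\alpha=\pm 1$ when $o(\mcM)=2$ is exactly the right mechanism, and the dimension count then forces $\iota$ to be an isomorphism.

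A few steps are asserted rather than established and would need to be written out. (a) You claim $H^0(\Gr_{n,k},\bigwedge^2\Omega\otimes\Omega^*)=0$; this is needed to get $\dim H^0((\mathcal T_{\gr})_1)=1$, and it is a genuine Borel--Weil--Bott computation (one must decompose $\rho_1\otimes\rho_2^*\otimes\bigwedge^2(\rho_1^*\otimes\rho_2)$ and check that no dominant highest weight occurs) rather than an immediate consequence of Proposition \ref{prop automorphisms of T^*(M)}. (b) The injectivity of $\iota$ on the odd part is waved through with ``a brief direct computation''; this is where the structure of $\mathfrak q_n$ acting on $\PiG_{n,k}$ actually enters and it should be spelled out in the charts $\mathcal Z_I$. (c) In part (2) you invoke Schur's lemma to conclude that $[d,-]\colon\mathfrak g_{-1}\to\mathfrak g_0$ is bijective because it is a ``nonzero'' morphism of adjoint $\mathfrak{sl}_2$-modules, but you never argue why it is nonzero; this requires either an explicit bracket computation in local coordinates on $\PiG_{2,1}$ or a transitivity argument. (d) Finally, the integration argument for $\tilde z$ should be phrased with a little more care: one should say that the flow $\exp(s\tilde z)$ is globally defined (the base is compact), that the functorial map $\operatorname{Aut}\mcM\to\operatorname{Aut}(\gr\mcM)$ takes it to $\exp(s z)=(\id,\wedge\phi_{e^s})$, and hence that every $\phi_\alpha$ would lift, contradicting Corollary \ref{cor order of smf and order of al}. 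None of these is a structural flaw, but as written they are gaps that a referee would flag.
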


Recall that the automorphism $\psi^{st}_{-1}$ of the structure sheaf was defined in Corollary \ref{cor psi_-1 exists}. Denote by $\Psi^{st}_{-1}$ the corresponding automorphism of the supermanifold $\mcM$. Note that for any supermanifold $(M,\mcO)$ the action of $\ps$ of $H^0(M,\mathcal T)$ is given by $v \mapsto \psi^{st}_{-1}\circ v\circ (\psi^{st}_{-1})^* = (-1)^{\tilde v} v$. If our supermanifold is split and $v$ is a vector field of degree $k$, we have $\phi_{\al}\circ v\circ \phi_{\al^{-1}} = \al^k v$. In particular for the graded operator $z$ from  Theorem \ref{teor vector fields on supergrassmannians} we have $\phi_{\al}\circ z\circ \phi_{\al^{-1}} = z$.

 Now everything is ready to prove the following theorem. Recall that the automorphism supergroup for a compact complex supermanifold is defined in terms of super-Harish-Chandra pairs by formula (\ref{eq def of automorphism supergroup}).

\renewcommand{\AA}{{\sf A}}

\begin{theorem}\label{t:Aut}
	{\bf (1)} If $\mathcal M = \Pi\!\Gr_{n,k}$, where $n\ne 2k$, then
	$$
	\AA\coloneqq\operatorname{Aut} \mathcal M\simeq  \PGL_n(\mathbb C) \times \{\id, \Psi^{st}_{-1} \}.
	$$

	The automorphism supergroup is given by the Harish-Chandra pair
	$$
	( \PGL_n(\mathbb C) \times \{\id, \Psi^{st}_{-1} \},  \mathfrak{q}_{n}(\mathbb C)/\langle E_{2n}\rangle).
	$$
	
	{\bf (2)} If $\mathcal M = \Pi\!\Gr_{2k,k}$, where $k\geq 2$, then
		$$
	\AA\coloneqq\operatorname{Aut} \mathcal M\simeq  \PGL_{2k}(\mathbb C) \rtimes  \{\id, \Theta, \Psi^{st}_{-1}, \Psi^{st}_{-1}\circ \Theta \},
	$$
	where $\Theta^2 = \Psi^{st}_{-1}$, $\Psi^{st}_{-1}$ is a central element of $\AA$,
    and $\Theta \circ g\circ \Theta^{-1} = (g^t)^{-1}$ for $g\in  \PGL_{2k}(\mathbb C)$.
	
     The automorphism supergroup is given by the Harish-Chandra pair
	$$
	(\PGL_{2k}(\mathbb C) \rtimes  \{\id, \Psi^{st}_{-1}, \Theta, \Psi^{st}_{-1}\circ \Theta \},  \mathfrak{q}_{2k}(\mathbb C)/\langle E_{4k}\rangle),
	$$
	where $\Theta \circ C\circ \Theta^{-1} = - C^{t_i}$ for $C\in  \mathfrak{q}_{2k}(\mathbb C)/\langle E_{4k}\rangle$ and $\Psi^{st}_{-1} \circ C\circ (\Psi^{st}_{-1})^{-1} = (-1)^{\tilde C} C$.
	
	{\bf (3)}	If $\mathcal M = \Pi\!\Gr_{2,1}$, then
		$$
	\AA\coloneqq	\operatorname{Aut} \mathcal M\simeq  \PGL_{2}(\mathbb C)\times \mathbb C^*.
		$$

		The automorphism supergroup is given by the Harish-Chandra pair
		$$
		( \PGL_{2}(\mathbb C)\times \mathbb C^*,  	\mathfrak g \rtimes \langle z\rangle).
		$$
		Here $\mathfrak g$ is a $\Z$-graded Lie superalgebra described in Theorem \ref{teor vector fields on supergrassmannians},  the action of $ \PGL_{2}(\mathbb C)\times \mathbb C^*$ on $z$ is trivial, and $\phi_{\al}\in \C^*$ multiplies $X\in \mathfrak v(\Pi\!\Gr_{2,1})_k$ by $\al^k$.
\end{theorem}

\begin{proof}
	We use the following exact sequence of groups:
	$$
	e \to H^0(M,\mathcal{A}ut_{(2)}\mathcal{O}) \to \operatorname{Aut} \mathcal M \to \operatorname{Aut} (\gr \mathcal M).
	$$
By (\ref{eq H^0()Aut_(2)}) we have $H^0(M,\mathcal{A}ut_{(2)}\mathcal{O})=e$. Therefore, $\operatorname{Aut} \mathcal M$ is a subgroup in $\operatorname{Aut} (\gr \mathcal M)$ and the group $\operatorname{Aut} (\gr \mathcal M)$ was computed in Theorem \ref{theor aut gr mcM}. If $(n,k) =(2,1)$, we have $ \Pi\!\Gr_{2,1}\simeq \gr  \Pi\!\Gr_{2,1}$. Hence, the result about the automorphism group follows from Theorem \ref{theor aut gr mcM}.

 In \cite{Manin} it was proven that  $\Pi\!\Gr_{n,k}$ possesses an effective action of $\PGL_n(\mathbb C)$, which is compatible with the natural action of $\PGL_n(\mathbb C)$ on $\gr\mcM$ and $M$ for any $(n,k)$. Assume that $(n,k) \ne (2,1)$. By  Theorem \ref{theor Aut O for Pi symmetric} we see that $H^0(M,\mathcal{A}ut \mathcal O) =\{id, \psi^{st}_{-1} \}$ (automorphisms which are identical on the base space $M$). In other words,  $\phi_{\al} $ can be lifted to $\Pi\!\Gr_{n,k}$ if and only if $\al =\pm 1$. Note that the automorphism $\Psi^{st}_{-1}=(\id, \psi^{st}_{-1})$ is defined on any supermanifold and it always commutes with any other automorphism. This implies the result for $n\ne 2k$.

For $n=2k$, $k\geq 2$, by Theorem \ref{theor lift of Phi}, the automorphism $\phi_{\al} \circ (\Phi, \wedge d(\Phi))$ can be lifted to  $\Pi\!\Gr_{n,k}$ if and only if $\al =\pm i$. Above we denoted the lift of  $\phi_{i} \circ (\Phi, \wedge d(\Phi))$  by $\Theta$. We check that  $ \{\id, \Psi^{st}_{-1}, \Theta, \Psi^{st}_{-1}\circ \Theta \}$  is a subgroup and $\PGL_{2k}(\mathbb C)$ is a normal subgroup in
 $\operatorname{Aut} \mathcal M$. Indeed, $\Theta^2 = \Psi^{st}_{-1}$ since
 \begin{align*}
 \gr (\Theta^2) = \gr (\Theta)^2 = \phi_{i} \circ (\Phi, \wedge d(\Phi)) \circ \phi_{i} \circ (\Phi, \wedge d(\Phi)) = \phi_{-1} = \gr (\Psi^{st}_{-1}).
 \end{align*}

 Further, $(\Psi^{st}_{-1})^2 =\id$ and  $\Psi^{st}_{-1}$ is central.
 Moreover, $\PGL_{2k}(\mathbb C)$ is normal in $\operatorname{Aut}(\gr \mathcal M)$, hence it is normal in $\operatorname{Aut}\mathcal M$ as well. We also have $\Theta \circ g\circ \Theta^{-1} = (g^t)^{-1}$ for $g\in  \PGL_{2k}(\mathbb C)$, since by Theorem \ref{theor aut gr mcM} we have
 \begin{align*}
 \gr (\Theta \circ g\circ \Theta^{-1}) = (\Phi, \wedge d(\Phi)) \circ g\circ (\Phi, \wedge d(\Phi))^{-1} = (g^t)^{-1}.
 \end{align*}

 Let us define the action of $\AA$ on the Lie superalgebra $\mathfrak v(\Pi\!\Gr_{n,k})$. The actions of $\Psi^{st}_{-1}$
and $(\id,\phi_{\al})$ were defined above. Further, since the automorphism $(\Phi, \wedge d(\Phi))$ preserves the $\Z$-grading, its action on $z$ is trivial. Let us compute the action of $\Theta$ on $\mathfrak{q}_{2k}(\mathbb C)/\langle E_{4k}\rangle$. Let $C$ be a homogeneous element in $\mathfrak{q}_{2k}(\mathbb C)/\langle E_{4k}\rangle$, then $E_{4k}+ t C$ is a one-parameter subgroup in $\Q_{2k}(\C)/\langle E_{4k}\rangle$. Here the parity of $t$ is the same as the parity of $C$. We need to compute $\Theta \circ (E_{4k}+ t C)\circ \Theta^{-1}$. We have
 \begin{align*}
 \left(\begin{array}{cc}
 X& \Xi \\
 E&0\\
 \Xi&X\\
 0&E
 \end{array}
 \right) \xrightarrow{\Theta^{-1}}
 \left(\begin{array}{cc}
 E&0\\
 -X^t& i\Xi^t \\
 0&E\\
 i\Xi^t&-X^t\\
 \end{array}
 \right)\xrightarrow{(E_{4k}+ t C)} (E_{4k}+ t C) \cdot
 \left(\begin{array}{cc}
 E&0\\
 -X^t& i\Xi^t \\
 0&E\\
 i\Xi^t&-X^t\\
 \end{array}
 \right)\\
 \xrightarrow{\Theta}
  ((E_{4k} + t C)^{-1})^{t_i} \cdot
 \left(\begin{array}{cc}
 X& \Xi \\
 E&0\\
 \Xi&X\\
 0&E
 \end{array}
 \right).
 \end{align*}
 Therefore,
 \begin{align*}
 \Theta \circ C\circ \Theta^{-1}  = \frac{d}{dt}\Big|_{t=0} (\Theta \circ (E_{4k}+ t C)\circ \Theta^{-1}) = \frac{d}{dt}\Big|_{t=0} ((E_{4k} + t C)^{-1})^{t_i} = - C^{t_i}.
 \end{align*}

  The proof is complete.
\end{proof}


\parskip=5pt
\parindent=12pt

\section{Real structures on a supermanifold}

\subsection{Real structures on commutative superalgebras}

Let us fix $\epsilon_i\in \{\pm 1\}$ for $i=1,2,3$. Following Manin \cite[Definition 3.6.2]{Manin} a
{\em real structure of type $(\epsilon_1,\epsilon_2,\epsilon_3)$} on a $\C$-superalgebra $A$
is an $\mathbb R$-linear (even) automorphism $\rho$ of $A$ such that
\[ \uprho(\uprho(a))=\epsilon_1^{\tilde a} a,
\qquad\uprho(ab)= \epsilon_3\epsilon_2^{\tilde a\hs\tilde b}\hs\uprho b\,\uprho\hm a, \qquad \uprho(\lambda a)=\bar\lambda\hs\uprho\hm a
,\]
for $\lambda\in\C$ and for homogeneous elements  $a,b\in  A$, where $\tilde a$ and $\tilde b$ denote the parities of $a$ and $b$, respectively.  To any real structure $\rho$ of type $(\epsilon_1,\epsilon_2,\epsilon_3)$ on $A$, we can assign a real structure $\rho'$ of type $(\epsilon_1,\epsilon_2,-\epsilon_3)$  as follows: we put $\rho'= -\rho$.
Moreover, we can assign  a real structure $\rho''$ of type $(\epsilon_1,-\epsilon_2,\epsilon_3)$ as follows:
we put  $\rho''(a)=\rho(a)$ if $a$ is even, and $\rho''(a)= i \rho(a)$ when $a$ is odd,
where $i=\sqrt{-1}$. See \cite[Section 1.11.2]{BLMS}.
Thus in order to classify real structures on $A$ of all types $(\epsilon_1,\epsilon_2,\epsilon_3)$,
it suffices to classify real structures of the types $(1,-1,1)$ and $(-1,-1,1)$.
In this paper we consider real structures of the type   $(1,-1,1)$.
Then for a commutative superalgebra $A$ and for a real structure $\rho$ on $A$ we have
\[\uprho(\uprho(a))= a, \qquad\uprho(ab)= \uprho\hm a\,\uprho b, \qquad \uprho(\lambda a)=\bar\lambda\uprho\hm a.\]
For a definition of a real structure on a complex Lie superalgebra see for instance \cite{Serganova}.

\def\Hm{{\mathcal H}}

\subsection{Real structures on supermanifolds}\label{sec complex conjugation}
Recall that a {\it real structure} on a complex-analytic manifold $M$ is an anti-holomorphic involution $\mu:M\to M$, see for instance \cite{ADima-Stefani}. This definition is equivalent to the following one. Let $(M,\mathcal F)$ be a complex-analytic manifold, that is  $\mathcal F$ is the sheaf of holomorphic functions on $M$. A homomorphism   $\rho: \mathcal F\to \mathcal F$ of sheaves of real local algebras is called a {\it real structure} on $M$ if
$$
\rho^2=id, \quad   \rho(\lambda f) = \ov{\lambda} \rho(f),
$$
where $\lambda\in \mathbb C$ and $f,g\in \mathcal F$. These definitions are equivalent. Indeed, if $\mu$ is a complex involution, we put
$$
\rho(f) =\ov{\mu^*(f)}\in \mathcal F.
$$
Conversely, if $\rho$ satisfies the second definition of a real structure, we put
$$
\mu^*(f) = \ov{\rho(f)},\quad \mu^*(\ov{f}) = \rho(f)\quad \text{for}\,\,\, f\in \mathcal F.
$$

Let us define a real structure on a supermanifold. A {\em real structure} of type $(1,-1,1)$ on a complex-analytic supermanfold $\M=(M,\mO)$ is a homomorphism   $\rho: \mcO\to \mcO$ of sheaves of real local superalgebras such that
\[ \rho^2=id, \quad  \rho(\lambda f) = \ov{\lambda} \rho(f)
\qquad\text{for}\ \ \lambda\in\C,\ f\in  \mO.\]
Since $\rho$ preserves the parity of elements in $\mcO$, it  preserves the sheaf of ideals $\mcJ\subset \mcO$ generated by odd elements.  Clearly the induced sheaves homomorphism $\rho' : \mcO/\mcJ \to \mcO/\mcJ$ is a real structure on the underlying manifold $M$.

 As in the case of complex-analytic manifolds let us give another definition of a real structure. To any complex-analytic supermanfold $\M=(M,\mO)$ of dimension $(n|m)$, we can assign a real supermanifold $\M^{\mathbb R}$ of dimension $(2n|2m)$.  This procedure is described for instance in \cite[Section. From complex to real]{Kal}.  There a definition of a complex-conjugation of a  function was given. This definition can be regarded as follows. Any super-function $f\in \mcO$  is at the same time a morphism from  $\mcM$ to $\mathbb C^{1|1}=\mathbb C^{1|0}\oplus \mathbb C^{0|1}$. More precisely, an even function $f\in \mcO_{\bar 0}$ can be regarded as a morphism $f:\mcM\to \mathbb C^{1|0}$  and  an odd element $f\in \mcO_{\bar 1}$ can be regarded as a morphism $f:\mcM\to \mathbb C^{0|1}$.
  Further, let $(z,\eta)$ be the standard complex coordinates in $\mathbb C^{1|1}$, where $z=z_1+iz_2$ and $\eta=\eta_1+i\eta_2$ and $z_i$, $\eta_j$ are standard real even and odd coordinates in $\mathbb R^{2|2}$, respectively.  In $\mathbb C^{1|1}$ we define a complex conjugation by the following formula
  $$
  \ov{z}= z_1-iz_2\quad \text{and} \quad \ov{\eta}= \eta_1-i\eta_2.
  $$
  Then for any $f:\mcM \to \mathbb C^{1|1}$ its complex conjugation $\ov{f}$ can be regarded as a composition of the morphism $f$ and the complex conjugation in $\mathbb C^{1|1}$.
  We also have
  \begin{equation*}
  \ov{f_1 \cdot f_2} = \ov{f_1} \cdot \ov{f_2},\quad f_1,\,f_2\in \mcO.
  \end{equation*}

If $f:\mcM\to \mathbb C^{1|1}$ is a morphism (or a super-function), we have for any morphism $F=(F_0,F^*):\mcM\to \mcM$ the following formula
\begin{equation}\label{eq fucntion as a morphism}
F^*(f) = f\circ F.
\end{equation}

 Now a {\it real structure $\mu$} on a supermanifold $\M$  is an anti-holomorphic involutive  automorphism $\mu=(\mu_0,\mu^*)$ of the  supermanifold $\M^{\mathbb R}$. That is $\mu^*$ maps $\mcO$ to $\ov{\mcO}$ and $\mu^2=\id$.

 \begin{proposition}
 	Two definitions of a real structure on $\mcM$ coincide.
 \end{proposition}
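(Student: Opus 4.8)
The plan is to upgrade the classical (non-super) equivalence recalled just above, carrying the $\Z/2\Z$-grading through and working on the doubled supermanifold $\M^\R$.

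The first step I would carry out is to fix an explicit model for $\M^\R$ together with the complex conjugation on it. In local coordinates $(z_a,\eta_b)$ on $\M$, writing $x_a,y_a$ for the real and imaginary parts of $z_a$ and $\eta_b',\eta_b''$ for those of $\eta_b$, the structure sheaf $\mO_{\M^\R}$ is locally the sheaf of real-analytic functions in $(x_a,y_a)$ tensored with the Grassmann algebra on $(\eta_b',\eta_b'')$. Inside it sits the subsheaf $\mO$ of holomorphic sections (the subalgebra generated over $\C$ by the $z_a$ and $\eta_b$) and, applying the conjugation $f\mapsto\ov f$ of (\ref{eq bar is a morphism}), its image $\ov{\mO}:=\{\ov f\mid f\in\mO\}$; together $\mO$ and $\ov{\mO}$ generate $\mO_{\M^\R}$ as a sheaf of $\R$-superalgebras. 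I will write $c\colon\mO_{\M^\R}\to\mO_{\M^\R}$, $c(g)=\ov g$, for the conjugation; by (\ref{eq bar is a morphism}) it is an even, involutive, multiplicative, $\C$-antilinear automorphism restricting to an isomorphism $\mO\isoto\ov{\mO}$. The one formal fact I will rely on is that a homomorphism out of $\mO_{\M^\R}$ is determined by its restrictions to $\mO$ and $\ov{\mO}$, and conversely that a compatible pair of homomorphisms on $\mO$ and on $\ov{\mO}$ — agreeing on $\C\subset\mO\cap\ov{\mO}$ and respecting products — extends uniquely to $\mO_{\M^\R}$.

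Next I would set up the dictionary, imitating the formulas of the manifold case. Given a real structure $\rho\colon\mO\to\mO$ of type $(1,-1,1)$, I define $\mu^*$ on generators by $\mu^*|_{\mO}:=c\circ\rho$ and $\mu^*|_{\ov{\mO}}:=\rho\circ c$, that is, $\mu^*(f)=\ov{\rho(f)}$ and $\mu^*(\ov f)=\rho(f)$ for $f\in\mO$; these agree on $\C$ since $\rho$ is unital with $\rho(\lambda f)=\ov\lambda\,\rho(f)$, so they glue to a homomorphism $\mu^*$ of $\mO_{\M^\R}$. Since $\rho$ and $c$ are even and multiplicative — for $\rho$ this uses that on a commutative superalgebra a type $(1,-1,1)$ structure satisfies $\rho(ab)=\rho(a)\,\rho(b)$, as recalled above — the composite $\mu^*$ is an even algebra homomorphism. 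The computation $\mu^*(\mu^*(f))=\mu^*(\ov{\rho(f)})=\rho(\rho(f))=f$ (here $\epsilon_1=1$), and symmetrically on $\ov{\mO}$, shows $\mu^*$ is an involution; as $\mu^*(\mO)\subseteq\ov{\mO}$, the pair $\mu=(\mu_0,\mu^*)$ is anti-holomorphic, where $\mu_0$ is the anti-holomorphic involution of $M$ attached to the real structure $\rho\bmod\mcJ$ on $\mcO/\mcJ$ by the classical correspondence recalled above. Conversely, given an anti-holomorphic involution $\mu=(\mu_0,\mu^*)$ of $\M^\R$ — so $\mu^*$ is a $\C$-linear automorphism of $\mO_{\M^\R}$ carrying $\mO$ into $\ov{\mO}$ (equivalently, commuting with $c$ and swapping $\mO$ and $\ov{\mO}$) with $(\mu^*)^2=\id$ — I set $\rho:=c\circ(\mu^*|_{\mO})\colon\mO\to\mO$; then $\rho$ is an even $\R$-algebra homomorphism, $\rho(\lambda f)=\ov\lambda\,\rho(f)$ follows from $\C$-linearity of $\mu^*$ on $\mO$ together with $\C$-antilinearity of $c$, and $\rho^2=\id$ follows from $(\mu^*)^2=\id$ and the commutation of $\mu^*$ with $c$; so $\rho$ is a real structure of type $(1,-1,1)$. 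A final line checks that the two assignments are mutually inverse: from $\rho$ one recovers $c\circ(c\circ\rho)=\rho$, and from $\mu$ one reconstructs $\mu^*$ from $\rho=c\circ\mu^*|_{\mO}$ via the same formulas.

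I expect the one genuinely delicate point to be the bookkeeping of the first step: pinning down $\mO_{\M^\R}$ and the conjugation $c$ precisely enough to justify "a homomorphism of $\mO_{\M^\R}$ is determined by its action on $\mO$ and $\ov{\mO}$", to see that an automorphism of $\M^\R$ carrying $\mO$ into $\ov{\mO}$ automatically swaps $\mO$ and $\ov{\mO}$ and commutes with $c$, and to check the compatibility of $\mu^*$ with the base involution $\mu_0$ over $M$. The super-structure itself causes no trouble beyond tracking parities, since for the commutative superalgebra $\mcO$ the type $(1,-1,1)$ axioms collapse to ordinary unsigned multiplicativity together with $\rho^2=\id$ and conjugate-linearity; so, modulo the doubling construction, the argument is the classical one essentially verbatim.
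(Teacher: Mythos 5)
Your proposal is correct and follows essentially the same route as the paper's own proof: both directions use exactly the correspondence $\rho(f)=\ov{\mu^*(f)}$ for $f\in\mcO$ and $\mu^*(f)=\ov{\rho(f)}$, $\mu^*(\ov f)=\rho(f)$, and both check involutivity by the same two-line computations. You are somewhat more meticulous about the setup — spelling out that $\mO_{\M^\R}$ is generated by $\mO$ and $\ov\mO$, that a homomorphism is determined by (and can be assembled from) compatible restrictions to these two subsheaves, and that $\mu^*$ commutes with the conjugation $c$ — points the paper treats as self-evident (it instead leans on the interpretation $\mu^*(f)=f\circ\mu$ from equation (\ref{eq fucntion as a morphism}) and dismisses the remaining checks as clear), but this is the same argument with the scaffolding made visible rather than a different proof.
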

 \begin{proof}
 	Let $\mu=(\mu_0,\mu^*)$ be an anti-holomorphic involutive automorphism of $\mcM^{\mathbb R}$. Then we define
 	$$
 	\rho(f):= \ov {\mu^* (f)}\quad \text{for} \,\,\, f\in \mcO.
 	$$
 Let us check that $\rho\circ \rho =\id$. Using (\ref{eq fucntion as a morphism}) we have
 \begin{align*}
\rho (\rho(f)) = \rho (\ov {f\circ \mu}) = \ov {\mu^* (\ov {f\circ \mu})} = \ov { \ov {f\circ \mu \circ \mu}} = f.
 \end{align*}	
 Other properties of $\rho$ are clear.

 On the other hand, let $\rho:\mcO\to \mcO$ be a real structure according to the first definition. By definition we put
 \begin{align*}
 \mu^*(f) = \ov {\rho (f)}\quad \text{for} \,\,\, f\in \mcO;\quad \mu^*(f) = \rho (\ov{f})\quad \text{for} \,\,\, f\in \ov{\mcO}.
 \end{align*}	
 Clearly $\mu$ is anti-holomorphic. Further, we have
 \begin{align*}
 &\mu^*(\mu^*(f)) = \mu^*(\ov {\rho (f)}) = \rho( {\rho (f)}) = f\quad \text{for} \,\,\, f\in \mcO;\\
 &\mu^*(\mu^*(f)) = \mu^*({\rho (\ov {f})}) = \ov{\rho( {\rho (\ov {f})}) }= f\quad \text{for} \,\,\, f\in \ov{\mcO}.
 \end{align*}	
 The proof is complete.  	
 \end{proof}

\def\upsig{{}^\gamma\hm}

Two real structures $\mu,\mu'$ on  $\M$
are called {\em equivalent} if the pairs $(\M,\mu)$ and $(\M,\mu')$ are isomorphic,
that is, if there exists a (complex-analytic) isomorphism of supermanifolds $\beta\colon\M\to \M$ such that $\mu'\circ\beta=\beta\circ\mu$.

\subsection{Isotropic $\Pi$-symmetric Hermitian super-Grassmannians}\label{sec isotropic Pi-symmetric super-Grassmannians}

Let $A$ be a commutative superalgebra. For even matrices over $A$ an $i$-transposition is defined
\begin{equation}\label{eq i transposition general}
\left(\begin{array}{cc}
A & B\\
C & D\\
\end{array}
\right)^{t_i} = \left(\begin{array}{cc}
A^t & iC^t\\
iB^t & D^t\\
\end{array}
\right),
\end{equation}
which satisfies $(C_1\cdot C_2)^{t_i} = C_2^{t_i} \cdot C_1^{t_i}$. (Here $i B^t$ is the matrix $B^t$, transpose to the matrix $B$, multiplied by the complex number $i=\sqrt{-1}$.) The $i$-transposition maps a $\Pi$-symmetric even matrix to a $\Pi$-symmetric even matrix
\begin{equation}\label{eq i transposition}
\left(\begin{array}{cc}
A & B\\
B & A\\
\end{array}
\right)^{t_i} = \left(\begin{array}{cc}
A^t & iB^t\\
iB^t & A^t\\
\end{array}
\right).
\end{equation}

We denote an isotropic $\Pi$-symmetric Hermitian super-Grassmannian by  $\Pi\operatorname {I}\!\Gr^{\mathbf{H}}_{2k,k}$, where $\mathbf{H}$ is the super-Hermitian form given by the following matrix in the standard basis of $\mathbb C^{2k|2k}$
\begin{align*}
H= \left(\begin{array}{cc}
b_k & 0\\
0 & b_k\\
\end{array}
\right), \quad  b_k=\left(\begin{array}{cc}
0 & -iE_k\\
iE_k & 0\\
\end{array}
\right),
\end{align*}
Compare with Section \ref{append matrix b_k}. We see that the restriction of the form $\mathbf{H}$ to the vector subspace of even $\Pi$-symmetric vectors is equal to the form $\mathcal F(-\,,-)$, see \ref{append matrix b_k}.

The isotropic $\Pi$-symmetric Hermitian super-Grassmannian $\Pi\operatorname {I}\!\Gr^{\mathbf{H}}_{2k,k}$ is a real subsupermanifold in $\Pi\!\Gr_{2k,k}$ defined by the following isotropy condition
\begin{equation}\label{eq isotropy condition}
(\ov{\mathcal Z}_I)^{t_i} \cdot H\cdot \mathcal Z_I = 0,
\end{equation}
where $\mathcal Z_I = \left(
\begin{array}{cc}
X'&\Xi'\\
\Xi'&X'\\
\end{array} \right)$ is a  $\Pi$-symmetric coordinate matrix of $\Pi\!\Gr_{2k,k}$, $i$-transposition is as in (\ref{eq i transposition}) and $\ov{\mathcal Z}_I$ is complex conjugation of $\mathcal Z_I$, see Section \ref{sec complex conjugation}.

Let us write this condition (\ref{eq isotropy condition}) explicitly for the coordinate matrix
\begin{align*}
\mathcal Z_{I}=
\left(\begin{array}{cc}
X&\Xi \\
E&0\\
\Xi& X\\
0&E
\end{array}
\right).
\end{align*}
We get after a direct computation
\begin{equation}\label{eq isotropy condition easy chart}
X= (\ov{X})^t,\quad - i \Xi = (\ov{\Xi})^t.
\end{equation}
The base space of this supermanifold is described in Corollary (\ref{cor iso gr}).

\section{Real structures on a $\Pi$-symmetric super-Grassmannian}

\subsection{The action of the complex conjugation on
$	\operatorname{Aut} \PiG_{n,k}$ }\label{ss:action of bar on Aut mcM}
If $a\in \operatorname{Aut} \mathcal M$,
we denote by  $\upsig a\in \operatorname{Aut} \mathcal M$ the  element satisfying
$(\upsig a)^*(f) = \overline{a^*(\overline f)}$ for any local function $f$ on $\mathcal M$.
(Recall that $\overline f$ is defined in Section \ref{sec complex conjugation}.)
Let us compute $\upsig a$ for any $a\in \operatorname{Aut}  \PiG_{n,k}$.  	
The group $\operatorname{Aut} \PiG_{n,k}$ was computed in Theorem \ref{t:Aut}.

\begin{proposition}\label{prop action of bar on Aut mcM}
	For any $(n,k)$ we have
	$$
	\upsig g = \ov{g}, \quad g\in \PGL_n(\mathbb C),\quad \upsig\hs(\Psi^{st}_{-1}) = \Psi^{st}_{-1}.
	$$
For $(2k,k)$, $k\geq 2$, we have
$$
\upsig\hs\Theta = \Theta^{-1}= \Psi^{st}_{-1}\circ \Theta. $$ 	
For $(2,1)$ we have
$$
 \upsig\phi_{\al} = \phi_{\ov{\al}}.
$$
\end{proposition}
\begin{proof}
 First of all, $ \Psi^{st}_{-1}$ multiplies an odd local function by $-1$ and it is identical on an even function.
 Therefore, for any supermanifold we have $\upsig\hs(\Psi^{st}_{-1}) = \Psi^{st}_{-1}$,
 see Section \ref{sec complex conjugation}.

 Further, let $\mathcal Z_I$ be  a coordinate matrix of $\PiG_{n,k}$, see Section \ref{sec def of a supergrassmannian} and $g\in \PGL_n(\mathbb C)$. Then the action of $\PGL_n(\mathbb C)$ on $\PiG_{n,k}$ is defined by the matrix multiplication $g\cdot \mathcal Z_I$, see \cite{Manin}.
 Therefore, $\overline{g\cdot  \overline{\mathcal Z_I}} = \overline{g} \cdot \mathcal Z_I$.
 Hence, $\upsig g = \ov{g}$.

 The automorphism $\Phi$ of $\Gr_{n,k}$ is described in local coordinates in Section \ref{sec The automorphism group of  Gr}.
 Clearly, $\upsig\hs\Phi = \Phi$.
 Note that $\upsig\hs(\Phi, \wedge d(\Phi))$ is the lift of $\upsig\hs\Phi$
 to $ \textsf{T}^*(\Gr_{n,k})$. The result follows.

 Recall that  $\phi_{\al}$ is defined for split supermanifolds and it multiplies a local section of the corresponding bundle by $\al$.
 Hence, $\upsig \phi_{\al}$ multiplies a local section of the corresponding bundle by $\ov{\al}$.

 To compute $\upsig\hs\Theta$ we note that $\gr (\ov{f}) = \ov{\gr (f)}$ for any function on  $\PiG_{n,k}$.
 Therefore,
 $$\gr (\upsig\hs\Theta) = \upsig\hs(\gr \Theta) =  \upsig\hs(\phi_{i} \circ (\Phi, \wedge d(\Phi)))
     = \phi_{-i} \circ (\Phi, \wedge d(\Phi)) = \gr (\Theta^{-1}).$$
 Recall that $\gr \operatorname{Aut}  \PiG_{n,k} \to \operatorname{Aut}  (\gr\PiG_{n,k})$ is injective, see Proof of Theorem \ref{t:Aut}. The result follows.
 \end{proof}

\subsection{Real structures on $\PiG_{n,k}$}\label{ss:real-structures}
Let $\mu^o=(\mu^o_0,(\mu^o)^*)$ denote the standard real structure on $\M=\PiG_{n,k}$.
Namely, the anti-holomorphic  involution $\mu^o$ of $\PiG_{n,k}$ is induced by the complex conjugation in $\C^{n|n}$. More precisely, let us describe $\mu^o$ in our charts. For instance, in the chart $\mathcal V_{1}$, see (\ref{eq two standard charts}), with local coordinates $x_{ij} = x^1_{ij} + ix^2_{ij} $, $\xi_{ab} = \xi^1_{ab} + i \xi^2_{ab}$, where $x^1_{ij},\, x^2_{ij};\, \xi^1_{ab},\,\xi^2_{ab}$ are real even and odd coordinates, the real structure $\mu^o$ is given by
	$$
	(\mu^o)^*(x^1_{ij} + ix^2_{ij})  = x^1_{ij} - ix^2_{ij},\quad
	(\mu^o)^*(\xi^1_{ab} + i \xi^2_{ab})  = \xi^1_{ab} -i \xi^2_{ab}.
	$$
In other charts the idea is similar.

If $n$ is even, write
	\[ a_J={\rm diag}(J,\dots,J)\ \,\text{($n/2$ times),\ \ where}\quad J=\SmallMatrix{0 &1\\-1&0}.\]
Set $c:=\pi(a_J)\in \PGL_n(\mathbb C)$, where $\pi\colon \GL_n(\mathbb C)\to\PGL_n(\mathbb C)$ is the canonical homomorphism. Due to Theorem \ref{t:Aut}, the element $c\in \PGL_n(\mathbb C)$ can be regarded as an holomorphic automorphism of $\mcM$.

\begin{theorem}\label{c:Pi}
	The number  of the equivalence classes of real structures $\mu$ on $\mcM$, and  representatives of these classes, are given in the list below:
	\begin{enumerate}
		\item[\rm (i)] If $n$ is odd, then there are two equivalence classes with representatives
		$$
		\mu^o, \quad (1,\ps)\circ\mu^o.
		$$
		\item[\rm (ii)] If $n$ is even and $n\neq 2k$,
		 then there are four equivalence classes with representatives
		$$
		\mu^o,\quad (1,\ps)\circ\mu^o, \quad  (c_J,1)\circ\mu^o, \quad (c_J,\ps)\circ\mu^o.
		$$
	
		\item[\rm (iii)] If $n=2k\ge 4$, then there are $k+3$ equivalence classes with representatives
		$$
		\mu^o,\quad (c_J,1)\circ\mu^o, \quad  (c_r,\Theta)\circ\mu^o, \,\, r= 0,\ldots, k.
		$$

	\item[\rm (iv)] 	If $(n,k)= (2,1)$, then there are two equivalence classes with representatives
		$$
		\mu^o,\quad  (c_J,1)\circ\mu^o.
		$$
	\end{enumerate}
Here  $\mu^o$ denotes the standard real structure on $\M=\PiG_{n,k}$ as above.
Moreover, $c_J\in\PGL_n(\C)$ and $c_r\in\PGL_{2k}(\C)$ for $r= 0,\ldots, k$ are certain elements
constructed  in Proposition \ref{p:H1} and Subsection \ref{ss:cp}, respectively.
\end{theorem}

\begin{proof}
By Proposition \ref{p:Serre-reduction} in Appendix \ref{app:cohomology},
the theorem follows immediately from Theorem  \ref{t:Aut-M}.
\end{proof}

\subsection{Ringed space of real points of $\PiG_{n,k}$}
We wish to describe the ringed space of real points of $\M=(M,\mO)$
of the corresponding  real structures $\mu=(\mu_0,\mu^*)$ from Theorem \ref{c:Pi}. In more details the {\it ringed space of real points} is by definition the ringed space $\M^{\mu}:= (M^{\mu_0}, \mcO^{\mu^*})$, where $M^{\mu_0}$ is the set of fixed points of $\mu_0$ and $\mcO^{\mu^*}$ is the sheaf of fixed points of $\mu^*$ over $M^{\mu}$. Let us first describe real points corresponding to to the real structures $\mu^o$ and $\ps\circ \mu^o$.

\begin{proposition}\label{prop PiGr(R)}
	\begin{enumerate}
		\item The ringed space of real points of $\PiG_{n,k}$ corresponding to the real structure $\mu^o$ can be identified with $\PiG_{n,k}(\mathbb R)$.
		\item The ringed space of real points of $\PiG_{n,k}$ corresponding to the real structure $\ps\circ \mu^o$ can be identified with a real subsupermanifold, which we denote by $\PiG'_{n,k}(\mathbb R)$.
	\end{enumerate}
\end{proposition}

\begin{proof}
	The first statement is obvious. Let us describe the supermanifold $\PiG'_{n,k}(\mathbb R)$ using charts and local coordinates. An atlas $\mathcal A$ of $\PiG'_{n,k}(\mathbb R)$ contains charts $\mathcal U_{I}$, where $I\subset\{1,\ldots,n\}$ with $|I| = k$. To any $I$ we assign the following $2n\times 2k$-matrix
	$$
	\mathcal Z_{I} =\left(
	\begin{array}{cc}
	X' & \Xi'\\
	\Xi' & X' \end{array} \right),
	$$
	We assume that $\mathcal Z_{I}$ contains the identity submatrix $E_{2k}$ of size $2k$ in the lines with numbers $i$ and $n+i$, where $i\in I$.  We also assume that non-trivial elements $x_{ij}$ of $X'$ are real numbers, i.e. $\ov{x}_{ij} = x_{ij}$,  while non-trivial elements $\xi_{ij}$ of $\Xi'$ are pure imaginary odd variables, i.e. $\ov{\xi}_{ij} = - \xi_{ij}$.  Transition functions are defined as above.
\end{proof}

\begin{remark}
	Note that for $n= 2k\geq 4$, compare with Theorem \ref{c:Pi}, the real structure $\ps\circ \mu^o$ is equivalent to the real structure $\mu^o$. Indeed, we have $\Theta \circ(\ps\circ \mu^o )\circ \Theta^{-1} = \mu^o$.
\end{remark}

In Proposition \ref{p:real-points} in Appendix \ref{app:cohomology} a description of real points of the base space $\Gr_{2n,2k}$ corresponding to the real structure $(c_J,1)\circ \mu^o$ was given. Indeed, we have a natural embedding $\GL_{n}(\mathbb H)\hookrightarrow \GL_{2n}(\mathbb C)$ defined as follows. If $(a_{ij})\in \GL_{n}(\mathbb H)$, we replace any quaternion entry $a_{ij}$ by the complex matrix $\left(\begin{array}{cc}
a^{ij}_{11}&  a^{ij}_{12}\\
-\bar a^{ij}_{12}&  \bar a^{ij}_{11}
\end{array}
\right)$, see (\ref{e:ijk}). This inclusion leads to an inclusion of homogeneous spaces
\begin{equation}\label{eq inclusion of homog spaces}
\Gr_{n,k}(\mathbb H)\simeq \GL_{n}(\mathbb H)/P(\mathbb H)\hookrightarrow \GL_{2n}(\mathbb C)/P(\mathbb C) \simeq \Gr_{2n,2k},
\end{equation}
where $P(\mathbb H)\subset \GL_{n}(\mathbb H)$ contains all matrices in the form $\left(\begin{array}{cc}
S&  0\\
T&  U
\end{array}
\right)$ over $\mathbb H$, where $S\in \operatorname{Mat}_{n-k}(\mathbb H)$, and $P(\mathbb C)\subset \GL_{2n}(\mathbb C)$ contains all matrices in the form $\left(\begin{array}{cc}
S'&  0\\
T'&  U'
\end{array}
\right)$ over $\mathbb C$, where $S'\in \operatorname{Mat}_{2n-2k}(\mathbb C)$.

 Let us describe this embedding in terms of local charts on  $\Gr_{n,k}(\mathbb H)$ and $\Gr_{n,k}(\mathbb C)$ as above. Let $U_I$, where $I\subset \{1,\ldots, n\}$ with $|I|=k$, be a local chart on  $\Gr_{n,k}(\mathbb H)$. Denote by $I'\subset \{1,\ldots, 2n\}$ the following set
 \begin{equation}\label{eq I'}
 I':= \{ 2i,\,\,2i-1\,\,| i\in I \}.
 \end{equation}
and by $U'_{I'}$ the corresponding to $I'$ chart on $\Gr_{n,k}(\mathbb C)$. Now the embedding (\ref{eq inclusion of homog spaces}) in charts $U_{I}\to U'_{I'}$ is given by
$$
\mathbb H\ni x_{ij} \mapsto
\left(\begin{array}{cc}
x^{ij}_{11}&  x^{ij}_{12}\\
-\bar x^{ij}_{12}&  \bar x^{ij}_{11}
\end{array}
\right),
$$
where $(x_{ij})$ are standard coordinates in $U_I$. For instance we see that the image of $\Gr_{n,k}(\mathbb H)$ in $\Gr_{n,k}(\mathbb C)$ is covered by chart of the form $U'_{I'}$, where $I'$ is as in (\ref{eq I'}).

\begin{proposition}\label{prop PiGr(H)2}
	Let $\M=\PiG_{2n,k}$.	
	\begin{enumerate}
		\item If $k$ is odd, there are no real points corresponding to the real structure $(c_J,1)\circ \mu^o$.

		\item If $k=2k'$, the real points of $\M$ corresponding to the real structure $(c_J,1)\circ \mu^o$ can be identified with $\PiG_{n,k'}(\mathbb H)$.
\end{enumerate}
\end{proposition}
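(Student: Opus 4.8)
The plan is to argue chart by chart, using the atlas $\mathcal A^\Pi=\{\mathcal U_I^\Pi\}$ on $\PiG_{2n,k}$ constructed in Section~\ref{sec charts on Gr}, the coordinate form of $\mu^o$ from Subsection~\ref{ss:real-structures}, and the construction of $\PiG_{n,k'}(\H)$ from Section~\ref{symmetric super-Grassmannians over R and H}. Write $\mu=(c,1)\circ\mu^o$; here $c=\pi(a_J)$ with $a_J=\operatorname{diag}(J,\dots,J)$ consisting of $n$ copies of $J=\SmallMatrix{0 &1\\-1&0}$, and $c$ acts on $\mcM=\PiG_{2n,k}$ through the $\PGL_{2n}(\C)$-action, i.e. by left multiplication of the homogeneous coordinate matrix by $\operatorname{diag}(a_J,a_J)$ (acting on each of the two $\Pi$-copies of $\C^{2n}$ the same way), followed by a renormalization. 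Since $a_J$ is real, $\mu$ is therefore given on a coordinate matrix by $\mathcal Z\mapsto\operatorname{diag}(a_J,a_J)\,\ov{\mathcal Z}\,C^{-1}$, which preserves the $\Pi$-symmetric shape $\mathcal Z=\SmallMatrix{X'&\Xi' \\ \Xi'&X'}$; on the base $M=\Gr_{2n,k}$ it induces the quaternionic structure $V\mapsto a_J\ov V$, an involution of $\Gr_{2n,k}$ whose lift to $\C^{2n}$ squares to $-\id$.

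For part~(1): if $k$ is odd and $V\in\Gr_{2n,k}$ is fixed by $\mu_0$, then $v\mapsto a_J\ov v$ restricts to a quaternionic structure on the $k$-dimensional complex space $V$, which is impossible. Hence $M^{\mu_0}=\emptyset$ (see also Proposition~\ref{p:real-points}), so $\mcM^\mu$ has no real points.

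For part~(2) put $k=2k'$. First I would restrict to the charts $\mathcal U_I^\Pi$ whose index set is a union of the pairs permuted by $a_J$, that is $I=\{2i-1,2i\mid i\in I_0\}$ with $I_0\subset\{1,\dots,n\}$, $|I_0|=k'$; these are exactly the charts $U'_{I'}$ with $I'$ as in~(\ref{eq I'}), and every $\mu_0$-fixed point of $M$ lies in one of them because a $k'$-dimensional $\H$-subspace of $\H^n\simeq\C^{2n}$ lies in some chart $U_{I_0}(\H)$ of $\Gr_{n,k'}(\H)$ by a rank argument over $\H$. For such $I$ the restriction of $a_J$ to the rows indexed by $I$ is again block-diagonal with blocks $J$, hence invertible, so $\mu$ maps $\mathcal U_I^\Pi$ to itself; and on the free coordinates — the $2\times2$ blocks $x_{\alpha\beta}$ of $X'$ and $\xi_{\alpha\beta}$ of $\Xi'$ indexed by $\alpha\in\{1,\dots,n\}\setminus I_0$ and $\beta\in I_0$ — it acts by $x_{\alpha\beta}\mapsto J\,\ov{x_{\alpha\beta}}\,J^{-1}$, and likewise for $\xi_{\alpha\beta}$. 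A direct $2\times2$ computation shows that the fixed-point equation $M=J\,\ov M\,J^{-1}$ forces $M=\SmallMatrix{a&b\\-\bar b&\bar a}$. Consequently the sheaf of $\mu$-fixed sections $\mcO^{\mu^*}$ is, over $\mathcal U_I^\Pi\cap M^{\mu_0}$, freely generated by the complex even coordinates $x^{\alpha\beta}_{11},x^{\alpha\beta}_{12}$ and the complex odd coordinates $\xi^{\alpha\beta}_{11},\xi^{\alpha\beta}_{12}$ assembled into the quaternion-type blocks of Section~\ref{symmetric super-Grassmannians over R and H}; this identifies the chart $(\mathcal U_I^\Pi)^\mu$ with the corresponding chart of $\PiG_{n,k'}(\H)$.

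It then remains to check compatibility of the gluing: on an overlap of two pair-charts $\mathcal U_I^\Pi,\mathcal U_J^\Pi$ the renormalizing submatrix $C_{IJ}$ (the submatrix of $\mathcal Z_I$ formed from the rows indexed by $J$ and their $\Pi$-copies) is, along the real locus, built from $2\times2$ quaternion-type blocks, hence lies in $\GL_{k'}(\H)\hookrightarrow\GL_k(\C)$ within the $\Pi$-symmetric part; therefore the transition functions $\mathcal Z_J=\mathcal Z_I C_{IJ}^{-1}$ of $\PiG_{2n,k}$ restrict to precisely the transition functions defining $\PiG_{n,k'}(\H)$. Gluing then yields $\mcM^\mu\simeq\PiG_{n,k'}(\H)$. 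The only real obstacle is bookkeeping: carrying the $\Pi$-symmetry and the renormalization correctly through these matrix manipulations — in particular verifying that $\mu$ genuinely preserves the pair-charts and that $C_{IJ}$ remains quaternion-type on the real locus — but no further conceptual point arises beyond the parity obstruction of part~(1).
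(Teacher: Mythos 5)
Your proof is correct and follows essentially the same route as the paper's: part (1) reduces to the nonexistence of a quaternionic structure on an odd-dimensional complex space (equivalently Proposition~\ref{p:real-points}), and part (2) is the same chart-by-chart computation in the "pair-charts" $U'_{I'}$, showing that the fixed-point condition $M=J\ov M J^{-1}$ forces each $2\times 2$ block into quaternion form and thereby recovers the defining charts of $\PiG_{n,k'}(\H)$. The only difference is one of presentation: the paper works out the $n=2,\ k'=1$ case explicitly by hand and says the general case is analogous, while you formulate the block transformation $x_{\alpha\beta}\mapsto J\ov{x}_{\alpha\beta}J^{-1}$ directly for all blocks and add an explicit check that the renormalizing matrices $C_{IJ}$ are quaternion-type on the real locus, which is a modest but welcome tightening of the gluing step.
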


\begin{proof}
		The first statement follows from Proposition \ref{p:real-points} in Appendix \ref{app:cohomology}. Indeed, in this case there is no real point on the base space. Assume that $k=2k'$.  We have to compute the fixed points of $(c_J,1)\circ \mu^o$. Let us do that for $n=2$ and $k'=1$; the general case in charts $U'_{I'}$, see (\ref{eq I'}), is similar. We apply the standard real structure $\mu^o$ to the coordinates $Z_1$, see (\ref{eq two standard charts}), and now we apply $c_J$
	\begin{align*}
	{\rm diag}(J,J,J,J)
	\left(\begin{array}{cccc}
	\bar x_{11}& \bar x_{12}&\bar \xi_{11}& \bar \xi_{12}\\
	\bar x_{21}& \bar x_{22}& \bar \xi_{21} & \bar \xi_{22} \\
	1&0&0&0\\
	0&1&0&0\\
	\bar \xi_{11}& \bar \xi_{12} &\bar x_{11}& \bar x_{12} \\
	\bar \xi_{21} & \bar \xi_{22} &\bar x_{21}& \bar x_{22} \\
	0&0& 1&0\\
	0&0& 0&1\\
	\end{array}
	\right)
	=
	\left(\begin{array}{cccc}
	\bar x_{21}& \bar x_{22}&\bar \xi_{21}& \bar \xi_{22}\\
	-\bar x_{11}& -\bar x_{12}& -\bar \xi_{11} & \bar \xi_{12} \\
	0&1&0&0\\
	-1&0&0&0\\
	\bar \xi_{21}& \bar \xi_{22} &\bar x_{21}& \bar x_{22} \\
	-\bar \xi_{11} & -\bar \xi_{12} &-\bar x_{11}& -\bar x_{12} \\
	0&0& 0&1\\
	0&0& -1&0\\
	\end{array}
	\right)
	\end{align*}

Therefore, in the standard coordinates, we get
\begin{align*}
\left(\begin{array}{cccc}
\bar x_{21}& \bar x_{22}&\bar \xi_{21}& \bar \xi_{22}\\
-\bar x_{11}& -\bar x_{12}& -\bar \xi_{11} & \bar \xi_{12} \\
0&1&0&0\\
-1&0&0&0\\
\bar \xi_{21}& \bar \xi_{22} &\bar x_{21}& \bar x_{22} \\
-\bar \xi_{11} & -\bar \xi_{12} &-\bar x_{11}& -\bar x_{12} \\
0&0& 0&1\\
0&0& -1&0\\
\end{array}
\right)
\left(\begin{array}{cccc}
0&-1 & 0& 0\\
1&0 & 0& 0\\
0& 0 &0&-1\\
0& 0 & 1&0
\end{array}
\right)
=\\
\left(\begin{array}{cccc}
\bar x_{22}& -\bar x_{21}&\bar \xi_{22}& -\bar \xi_{21}\\
-\bar x_{12}& \bar x_{11}& -\bar \xi_{12} & \bar \xi_{11} \\
1&0&0&0\\
0&1&0&0\\
\bar\xi_{22}& -\bar \xi_{21} &\bar x_{22}& -\bar x_{21} \\
-\bar \xi_{12} & \bar \xi_{11}  &-\bar x_{12}& \bar x_{11} \\
0&0& 1&0\\
0&0& 0&1\\
\end{array}
\right)
 =
\left(\begin{array}{cccc}
 x_{11}&  x_{12}& \xi_{11}&  \xi_{12}\\
 x_{21}&  x_{22}&  \xi_{21} &  \xi_{22} \\
1&0&0&0\\
0&1&0&0\\
 \xi_{11}& \xi_{12} & x_{11}&  x_{12} \\
 \xi_{21} &  \xi_{22} & x_{21}&  x_{22} \\
0&0& 1&0\\
0&0& 0&1\\
\end{array}
\right)
\end{align*}
Therefore, $x_{11} = \bar x_{22}$, $ x_{12} = -\bar x_{21}$,  $\xi_{11} = \bar \xi_{22}$, $ \xi_{12} = -\bar \xi_{21}$. In the matrix form we have
$$
\left(\begin{array}{cccc}
x_{11}&  x_{12}& \xi_{11}&  \xi_{12}\\
-\bar x_{12}&  \bar x_{11}&  -\bar \xi_{12} & \bar \xi_{11} \\
1&0&0&0\\
0&1&0&0\\
\xi_{11}& \xi_{12} & x_{11}&  x_{12} \\
-\bar \xi_{12} & \bar \xi_{11}  & -\bar x_{12}&  \bar x_{11} \\
0&0& 1&0\\
0&0& 0&1\\
\end{array}
\right)
$$
Now we can regard the matrix
$
z:=\left(\begin{array}{cc}
x_{11}&  x_{12}\\
-\bar x_{12}&  \bar x_{11}
\end{array}
\right)$ as a new quaternion even variable, while $
\zeta:=\left(\begin{array}{cc}
\xi_{11}&  \xi_{12}\\
-\bar \xi_{12}&  \bar \xi_{11}
\end{array}
\right)$ is a new quaternion odd variable. In short we have
$$
\left(\begin{array}{cc}
z&\zeta\\
1&0\\
\zeta& z\\
0&1
\end{array}
\right)
$$
is a standard chart on a $\Pi$-symmetric quaternion super-Grassmannian $\PiG_{2,1}(\mathbb H)$, see above.  Clearly our computation does not depend on the  choice of a local chart $U'_{I'}$ and it is similar for other $n$ and $k'$.
\end{proof}

\begin{proposition}\label{prop PiG'(H)}
\begin{enumerate}
	\item If $n$ is even but $k$ is odd, then there are no real points corresponding to the real structure $(c_J,\ps)\circ \mu^o$.
	\item If  $n=2n'$ and $k=2k'$, then the real points of $\M$ corresponding to the real structure $(c_J,\ps)\circ  \mu^o$ is a real supermanifold, which we denote by $\PiG'_{2n',k'}(\mathbb H)$.
\end{enumerate}
\end{proposition}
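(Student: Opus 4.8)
The plan is to obtain this proposition by combining the base-space computation already carried out for the real structure $(c,1)\circ\mu^o$ (Proposition \ref{prop PiGr(H)2} together with Proposition \ref{p:real-points}) with the sign twist on odd coordinates produced by $\ps$, which is exactly the twist that turned $\mu^o$ into the real structure defining $\PiG'_{n,k}(\R)$ in Proposition \ref{prop PiGr(R)}.

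For part (1), I would first observe that the anti-holomorphic involution that $(c,\ps)\circ\mu^o$ induces on the base is $c\circ\mu^o_0$, the very same involution attached to $(c,1)\circ\mu^o$, because $\ps=(\id,\psi^{st}_{-1})$ is the identity on $\mcO/\mcJ$. Hence when $k$ is odd, Proposition \ref{p:real-points} tells us that this involution has no fixed point on the base, and so $\mcM^{(c,\ps)\circ\mu^o}=\emptyset$; there is nothing further to do.

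For part (2), with $n=2n'$ and $k=2k'$, the plan is to redo, in a chart $\mathcal U_{I'}$ with $I'$ as in (\ref{eq I'}), the matrix computation from the proof of Proposition \ref{prop PiGr(H)2}, carrying along the extra factor $-1$ that $\psi^{st}_{-1}$ places on every conjugated odd generator. Concretely: apply to the coordinate matrix $\mathcal Z_{I'}$ first $\mu^o$ (entrywise complex conjugation), then $\psi^{st}_{-1}$ (multiply the conjugated odd entries by $-1$, leave the even ones unchanged), then left multiplication by $c={\rm diag}(J,\dots,J)$, and finally the right multiplication by the submatrix inverse that normalizes the chart. Comparing the result with the original $\mathcal Z_{I'}$ should produce, for each even $2\times 2$ block, the same relations $x_{11}=\bar x_{22}$, $x_{12}=-\bar x_{21}$ as in the untwisted case, so that the block is a quaternion $z=\SmallMatrix{x_{11}&x_{12}\\ -\bar x_{12}&\bar x_{11}}$; for each odd $2\times 2$ block the twist flips the signs to $\xi_{11}=-\bar\xi_{22}$, $\xi_{12}=\bar\xi_{21}$, so that the block becomes $\zeta=\SmallMatrix{\xi_{11}&\xi_{12}\\ \bar\xi_{12}&-\bar\xi_{11}}$. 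I would then define $\PiG'_{2n',k'}(\H)$ to be the real subsupermanifold cut out, in every chart of the atlas, by these relations: the $\Pi$-symmetric quaternionic super-Grassmannian whose odd quaternionic entries are ``imaginary'' in the same sense that the odd coordinates of $\PiG'_{n,k}(\R)$ are pure imaginary. Finally I would verify, just as in Propositions \ref{prop PiGr(H)2} and \ref{prop PiGr(R)}, that these relations are preserved by the transition functions of $\PiG_{2n,k}$, so that the fixed-point ringed space really is $\PiG'_{2n',k'}(\H)$.

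The step I expect to be the main obstacle is not any single one of these but the sign bookkeeping through the whole composition $\mu^o$, then $\psi^{st}_{-1}$, then $c$ on the left, then the normalizing right multiplication: the twisted reality conditions on the odd blocks must come out consistently and, crucially, be stable under gluing. Conceptually nothing new happens beyond Propositions \ref{prop PiGr(H)2} and \ref{prop PiGr(R)}; the only genuinely new output is the precise form of the odd blocks of $\PiG'_{2n',k'}(\H)$ together with the check that this form patches across charts.
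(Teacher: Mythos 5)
Your proposal is correct and follows essentially the same route as the paper: part (1) is reduced to the base-space emptiness result of Proposition \ref{p:real-points}, and part (2) reruns the coordinate computation of Proposition \ref{prop PiGr(H)2} with the extra sign on conjugated odd entries, producing the same reality conditions $\xi_{11}=-\bar\xi_{22}$, $\xi_{21}=\bar\xi_{12}$ and the odd quaternionic block $\zeta=\left(\begin{smallmatrix}\xi_{11}&\xi_{12}\\ \bar\xi_{12}&-\bar\xi_{11}\end{smallmatrix}\right)$ that defines $\PiG'_{2n',k'}(\H)$.
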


\begin{proof}
The first statement follows from Proposition \ref{p:real-points} in Appendix \ref{app:cohomology}, since in this case there is no real point on the base space. If $k=2k'$ is even, we repeat the argument similar to Proposition \ref{prop PiGr(R)}. For simplicity we assume that $k'=1$, further we apply $(c_J,\ps)\circ  \mu^o$, we will get for odd coordinates (for even coordinates the equation is the same as above):
$$
\left(\begin{array}{cc}
\xi_{11}&  \xi_{12}\\
\xi_{21}&  \xi_{22}
\end{array}
\right)
=
\left(\begin{array}{cc}
-\ov{\xi}_{22}&  \ov{\xi}_{21}\\
\ov{\xi}_{12}&  -\ov{\xi}_{11}
\end{array}
\right).
$$
Therefore, $\xi_{11} = -\ov{\xi}_{22}$ and $\xi_{21} =  \ov{\xi}_{12}$. Now $
\zeta:=\left(\begin{array}{cc}
\xi_{11}&  \xi_{12}\\
\bar \xi_{12}&  -\bar \xi_{11}
\end{array}
\right)$ is a new quaternion odd variable. Now local charts on $\PiG'_{2n',k'}(\mathbb H)$ are defined.
\end{proof}

\begin{remark}
	Note that for $n= 2k\geq 4$, compare with Theorem \ref{c:Pi}, the real structure $(c_J,\ps)\circ \mu^o$ is equivalent to the real structure $(c_J,1)\circ \mu^o$. Indeed, we have $\Theta \circ((c_J,\ps)\circ \mu^o)\circ \Theta^{-1} = (c_J,1)\circ \mu^o$.
\end{remark}

Let us prove the following proposition.

\begin{proposition}\label{prop Pi Isotropic_super_Gra}
	\begin{enumerate} Let $n=2k$.
		\item There are no real points corresponding to the real structure $(c_r,\Theta)\circ\mu^o, \,\, r= 0,\ldots, k-1$.
		\item Let $k\geq 2$. The real points of $\M$ corresponding to the real structure $(c_k,\Theta)\circ\mu^o$ can be identified with $\Pi\operatorname {I}\!\Gr^{\mathbf{H}}_{2k,k}$.
		\end{enumerate}
\end{proposition}

\begin{proof}
	The first statement follows from Corollary \ref{p:X}. In this case there no real points on the base space.

	Let us prove the second statement. We take a coordinate matrix $\mathcal Z_I$ of  $\PiG_{2k,k}$. The idea is to prove that the isotropy condition (\ref{eq isotropy condition}), see also (\ref{eq isotropy condition easy chart}), is equivalent to the condition $(c_k,\Theta)\circ\mu^o (\mathcal Z_I) =\mathcal Z_I$.
	Indeed, by Section \ref{append matrix b_k}, the cocycle $c_k$ may be represented by the matrix
	\begin{align*}
	H =
	\left(\begin{array}{cc}
	b_k & 0\\
	0 & b_k\\
	\end{array}
	\right), \quad  b_k=\left(\begin{array}{cc}
	0 & -iE_k\\
	iE_k & 0\\
	\end{array}
	\right).
	\end{align*}
We have
\begin{align*}
(c_k, \Theta) \circ\mu^o (\mathcal Z_I) =   H \cdot \Theta (\ov{\mathcal Z}_I) = H \cdot (\ov{\mathcal Z}_I^{\perp})^{t_i}.
\end{align*}	
Here we denoted by $\mathcal Z^{\perp}_I$ the result of application of the map (\ref{eq Z to Z perp}).  Further, we have
\begin{align*}
	(\ov{H \cdot (\ov{\mathcal Z}_I^{\perp})^{t_i}})^{t_i}  \cdot H\cdot \mathcal Z_I= (\mathcal Z_I^{\perp})  \ov{H}^{t} \cdot H \cdot \mathcal Z_I= - \mathcal Z_I^{\perp}\cdot \mathcal Z_I =0.
\end{align*}
Therefore the fixed point condition $H \cdot (\ov{\mathcal Z}_I^{\perp})^{t_i} = \mathcal Z_I$ is equivalent to the isotropy condition (\ref{eq isotropy condition})
\end{proof}

From Propositions  \ref{prop PiGr(H)2}, \ref{prop PiGr(R)}, \ref{prop PiG'(H)}, \ref{prop Pi Isotropic_super_Gra} and Theorem \ref{c:Pi}, we obtain the following result.

\begin{theorem}\label{theor real main} Let $\mcM = \PiG_{n,k}$. The ringed space of real points $\mcM^{\mu}$ corresponding to the real structures $\mu$ are as follows.
\begin{enumerate}
	\item[\rm (i)] If $n$ is odd, then
	$$
	\mcM^{\mu^o} =\PiG_{n,k}(\mathbb R)\quad  \text{and} \quad \mcM^{\ps\circ\mu^o} = \PiG'_{n,k}(\mathbb R).
	$$	
	
	\item[\rm (ii)] For $(n,k) = (2,1)$ we have
	$$
	\mcM^{\mu^o} =\PiG_{2,1}(\mathbb R)\quad  \text{and} \quad \mcM^{(c_J,1)\circ\mu^o} = \emptyset.
	$$	
	
	\item[\rm (iii)] If $n$ is even, $n\ne 2k$ and $k$ is odd,  we have
	\begin{align*}
	\mcM^{\mu^o} =\PiG_{n,k}(\mathbb R), \quad &\mcM^{\ps\circ\mu^o} = \PiG'_{n,k}(\mathbb R), \quad \mcM^{(c_J,1)\circ\mu^o} = \emptyset,\\
	&\mcM^{(c_J,\ps)\circ\mu^o} = \emptyset.
	\end{align*}
	
	\item[\rm (iv)] If $n,k$ are even and $n\ne 2k$, we have
	\begin{align*}
	\mcM^{\mu^o} =\PiG_{n,k}(\mathbb R), \quad &\mcM^{\ps\circ\mu^o} = \PiG'_{n,k}(\mathbb R), \quad \mcM^{(c_J,1)\circ\mu^o} = \PiG_{n/2,k/2}(\mathbb H),\\ &\mcM^{(c_J,\ps)\circ\mu^o} = \PiG'_{n/2,k/2}(\mathbb H).
	\end{align*}
	
	\item[\rm (v)] If $k$ is even and $n= 2k$, we have
	\begin{align*}
		\mcM^{\mu^o} =\PiG_{2k,k}(\mathbb R), \quad \mcM^{(c_J,1)\circ\mu^o} = \PiG_{k,k/2}(\mathbb H), \quad \mcM^{(c_r,\Theta)\circ\mu^o} = \emptyset, \,\,\, r=0,\ldots, k-1,\\ \mcM^{(c_k,\Theta)\circ\mu^o} = \Pi\operatorname {I}\!\Gr^{\mathbf{H}}_{2k,k}.
	\end{align*}
	
		\item[\rm (vi)] If $k\geq 3$ is odd and $n= 2k$, we have
		\begin{align*}
		\mcM^{\mu^o} =\PiG_{2k,k}(\mathbb R), \quad \mcM^{(c_J,1)\circ\mu^o} = \emptyset,
		\quad \mcM^{(c_r,\Theta)\circ\mu^o} = \emptyset, \,\,\, r=0,\ldots, k-1,\\ \mcM^{(c_k,\Theta)\circ\mu^o} = \Pi\operatorname {I}\!\Gr^{\mathbf{H}}_{2k,k}.
		\end{align*}
		
\end{enumerate}

\end{theorem}


\newcommand{\mm}{{\mu}}
\newcommand{\Am}{{\sf A}}
\newcommand{\am}{{a}}
\newcommand{\cm}{{{c}}}

\newcommand{\Pm}{{P}}	
\newcommand{\Pmm}{{P\hskip 0.4pt}}

\newcommand{\upgam}{{\hs^\gamma}}
\newcommand{\gm}{{\gamma}}
\newcommand{\sm}{{\sigma}}
\renewcommand{\cG}{{\hs_\cm G}}

\newcommand{\U}{{\rm U}}
\newcommand{\PU}{{\rm PU}}
\newcommand{\diag}{{\rm diag}}

\def\GmR{{\mathbb G}_{{\rm m},\R}}
\def\ctil{{\breve c}}
\def\gbar{{\bar g}}
\def\cF{{\mathcal F}}
\def\db{{\breve{d}}}

\def\ii{{\boldsymbol{i}}}

\newcommand{\GrI}{{\text{\rm IGr}}}

\appendix

\section{Real Galois cohomology}
\label{app:cohomology}
\medskip

\centerline{\em by Mikhail Borovoi}
\medskip

In this appendix  we prove Theorem \ref{t:Aut-M}
computing $H^1(\R, \Aut\,\M)$ where $M=\Pi\Gr_{n,k}(\C)$.
This result  gives us Theorem \ref{c:Pi}
classifying real structures on $\Pi\Gr_{n,k}(\C)$.
After that, we state and  prove  Proposition \ref{p:real-points},
Proposition  \ref{p:X}, and Corollary \ref{cor iso gr},
which compute the set of real points
of certain twisted forms of the real Grassmannian $\Gr_{n,\hs k,\hs\R}$\hs.

\begin{subsec}
	Let $\Gamma={\rm Gal}(\C/\R)$ denote the Galois group of $\C$ over $\R$.
	Then $\Gamma=\{1,\gamma\}$, where $\gamma$ is the complex conjugation.
	
	Let $(\Am,\sm)$ be a pair where  $\Am$ is a group (not necessarily abelian) and $\sm\colon \Am\to \Am$
	is an automorphism such that $\sm^2=\id_\Am$.
	We define a left action of $\Gamma$ on $\Am$ by
	\[ (\gm, a)\mapsto \upgam\hm a\coloneqq\sm(a)\ \, \text{for}\ a\in \Am.\]
	We say that $(\Am,\sm)$ is a {\em $\Gamma$-group.}
	
	We consider the set of $1$-{\em cocycles}
	\[ Z^1(\Am,\sm)=\{\cm\in \Am\,\mid\,\cm\cdot\hm\upgam\hm\cm =1\},\]
	where $\upgam\hm c=\sm(c)$.
	The group $\Am$ acts on the left on $Z^1(\Am,\sm)$ by
	\[\am\star\cm=\am\cdot\cm\cdot\hm\upgam\hm \am^{-1}\ \ \text{for}\  \am\in \Am,\ \cm\in Z^1(\Am,\sm).\]
	As in \cite{Serre}, Section I.5.1, we define the 1-{\em cohomology set}
	\[H^1(\Am,\sm)=Z^1(\Am,\sm)/\Am,\]
	to be the set of orbits of $\Am$ in $Z^1(\Am,\sm)$.
	We shall write $Z^1\Am$ for $Z^1(\Am,\sm)$ and $H^1 \Am$ for $H^1(\Am,\sm)$.
	We denote by $[\cm]\in H^1 \Am$ the cohomology class of a $1$-cocycle $\cm\in Z^1 \Am$.
	Note that $1\in Z^1 \Am$; we denote its class in $H^1 \Am$ by $[1]$ or just by $1$.
	In general (when $\Am$ is nonabelian) the cohomology set $H^1\Am$ has no natural group structure,
	but it has a canonical neutral element $[1]$.
	
\end{subsec}

\begin{rem}\label{r:H1-obvious}
	\begin{enumerate}
		\item If $(\Am,\sm)=(\Am',\sm')\times(\Am'',\sm'')$, then
		\begin{align*}
		Z^1(\Am,\sm)=Z^1(\Am',\sm')\times Z^1(\Am'',\sm'')\ \ \text{and}\ \ H^1(\Am,\sm)=H^1(\Am',\sm')\times H^1(\Am'',\sm'').
		\end{align*}
		
		\item $H^1(\{\pm 1\}, \id)=Z^1(\{\pm 1\}, \id)=\{\pm 1\}$.
		
		\item $H^1\big(\C^*,(z\mapsto \bar z)\big)=\{1\}$ (exercise); this is a special case of Hilbert's Theorem 90.
	\end{enumerate}
\end{rem}

\subsec{}

Let $G$ be a linear algebraic group defined over $\R$ (we write ``an $\R$-group").
We denote by $G(\R)$ and $G(\C)$ the groups of $\R$-points and of $\C$-points of $G$, respectively.
The nontrivial element $\gamma\in\Gamma$ acts on $G(\C)$ by the complex conjugation $\sm\colon g\mapsto\bar g$.
We write $H^1(\R,G)=H^1(G(\C),\sm)$.
For simplicity, we write $H^1\hs G$ or $H^1\hs G(\C)$ for $H^1(\R,G)$.

\begin{subsec}
	Consider the short exact sequence of $\G$-groups
	\begin{equation}\label{e:exact-GL}
	1\to \C^*\to\GL(n,\C)\labelto\pi\PGL_n(\mathbb C)\to 1,
	\end{equation}
	where $\gamma\in\G={\rm Gal}(\C/\R)$ acts on $g\in\GL(n,\C)$
	by the complex conjugation $g\mapsto \bar g$.
	This exact sequence induces a cohomology exact sequence
	\[1=H^1\GL(n,\C)\to H^1\PGL_n(\mathbb C)\labelto\Delta H^2\, \C^*;\]
	see Serre's book \cite{Serre}, Section I.2.2 for the definition of $H^2$,
	and Proposition 43 in Section I.5.7 for the exact sequence.
	See also \cite{BTb}, Section 1.1 and Construction 4.4.
\end{subsec}

\begin{prop}[well-known]
	\label{p:H1}
	\[\#H^1\PGL_n=
	\begin{cases} &\!\!1\qquad\text{if $n$ is odd}\\&\!\!2 \qquad\text{if $n$ is even} \end{cases}
	\]
	If $n$ is even, write
	\[ a_J={\rm diag}(J,\dots,J)\ \,\text{($n/2$ times)\ \ where}\quad J=\SmallMatrix{0 &1\\-1&0}.\]
	Then $\cm:=\pi(a_J)\in \PGL_n(\mathbb C)$ is a cocycle representing the nontrivial cohomology class in $H^1\PGL_n$.
\end{prop}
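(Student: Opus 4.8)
The plan is to exploit the cohomology exact sequence attached to the central extension $1\to\C^\times\to\GL(n,\C)\labelto\pi\PGL_n\to1$ displayed above. By Hilbert's Theorem~90 one has $H^1\GL(n,\C)=\{1\}$ (the scalar case being Remark~\ref{r:H1-obvious}(3)), so exactness of $H^1\GL(n,\C)\to H^1\PGL_n\labelto\Delta H^2(\Gamma,\C^\times)$ shows that the connecting map $\Delta$ has trivial kernel: $\Delta([\cm])=1$ forces $[\cm]=1$. Moreover $H^2(\Gamma,\C^\times)\cong\R^\times/N_{\C/\R}(\C^\times)=\R^\times/\R_{>0}\cong\{\pm1\}$, which is the Brauer group $\operatorname{Br}(\R)$. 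So it remains to compute the image of $\Delta$ and to verify that $\Delta$ is injective.

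I would make $\Delta$ explicit on cocycles. Given $\cm\in Z^1\PGL_n$, lift it to $g\in\GL(n,\C)$; the relation $\cm\,\overline{\cm}=1$ means $g\,\bar g=\lambda E_n$ for some $\lambda\in\C^\times$. Taking complex conjugates gives $\bar g\,g=\bar\lambda E_n$, while conjugating $g\bar g$ by $g^{-1}$ also gives $\bar g\,g=\lambda E_n$; hence $\lambda\in\R^\times$. Replacing the lift $g$ by $zg$ ($z\in\C^\times$) multiplies $\lambda$ by $z\bar z>0$, so the class $[\lambda]\in\R^\times/\R_{>0}$ is well defined and it is precisely $\Delta([\cm])$. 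Taking determinants in $g\bar g=\lambda E_n$ gives $|\det g|^2=\lambda^n$, hence $\lambda^n>0$.

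For $n$ odd this forces $\lambda>0$, so $\Delta$ is trivial; since its kernel is trivial, $H^1\PGL_n=\{1\}$. For $n$ even, I first check $\cm:=\pi(a_J)\in Z^1\PGL_n$: the matrix $a_J$ is real and $a_J^2=-E_n$, so $\cm\,\overline{\cm}=\pi(a_J)\pi(\overline{a_J})=\pi(a_J^2)=\pi(-E_n)=1$. Taking $g=a_J$ as a lift gives $g\bar g=-E_n$, so $\Delta([\cm])=[-1]\neq1$; in particular $[\cm]\neq1$, so $\#H^1\PGL_n\geq2$. For the reverse bound I would prove $\Delta$ injective: if $\cm'\in Z^1\PGL_n$ has $\Delta([\cm'])=[-1]$, choose a lift $g'$ with $g'\bar{g'}=-E_n$; then $v\mapsto g'\bar v$ is a quaternionic structure on $\C^n$. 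Since any two quaternionic structures on $\C^n$ are $\GL(n,\C)$-conjugate (equivalently, finite-dimensional right $\H$-vector spaces of equal dimension are isomorphic), there is $a\in\GL(n,\C)$ with $g'=a\,a_J\,\bar a^{-1}$, so $[\cm']=[\pi(a_J)]$. Hence $\#H^1\PGL_n=2$ for $n$ even, with $\pi(a_J)$ representing the nontrivial class.

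The main obstacle is exactly this injectivity of $\Delta$ for even $n$ (the statement that there is no ``third'' real form), which I would handle either by the uniqueness of quaternionic structures on $\C^n$ as sketched above, or, more conceptually, by invoking the standard bijection of $H^1(\R,\PGL_n)$ with the set of isomorphism classes of central simple $\R$-algebras of degree $n$, which by Frobenius's theorem consists of $\operatorname{M}_n(\R)$ and, when $n$ is even, $\operatorname{M}_{n/2}(\H)$. Everything else is a formal computation with the exact sequence together with one determinant identity.
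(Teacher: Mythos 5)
Your proof is correct and rests on the same framework as the paper's: the central extension $1\to\C^\times\to\GL(n,\C)\labelto\pi\PGL_n(\C)\to1$ and its connecting map $\Delta$ to $H^2(\Gamma,\C^\times)$. The overlap is precisely the cocycle verification for $\cm=\pi(a_J)$ and the computation $\Delta[\cm]=[-1]\ne[1]$, which both you and the paper do identically. Where you genuinely diverge is the count $\#H^1\PGL_n$: the paper simply cites \cite[Corollary 13.6]{BT}, whereas you derive it in a self-contained way. You identify $H^2(\Gamma,\C^\times)\cong\R^\times/\R_{>0}\cong\{\pm1\}$, make $\Delta$ explicit on cocycles via $g\hs\bar g=\lambda E_n$ with $\lambda\in\R^\times$, and use the determinant identity $|\det g|^2=\lambda^n$ to force $\lambda>0$ when $n$ is odd, giving $H^1\PGL_n=\{1\}$. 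For even $n$ the delicate step is the injectivity of $\Delta$ (the nonabelian exact sequence only gives triviality of the fiber over the base point), and you supply it directly: a cocycle with $\Delta$-image $[-1]$ produces a quaternionic structure on $\C^n$, all of which are $\GL(n,\C)$-conjugate, so the cocycle is cohomologous to $\pi(a_J)$ --- equivalently, you could invoke the bijection of $H^1(\R,\PGL_n)$ with isomorphism classes of degree-$n$ central simple $\R$-algebras and Frobenius's theorem. Your route is longer but self-contained and elementary; the paper's is shorter because it outsources the cardinality to a reference.
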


\begin{proof}
	The cardinality $\#H^1 \PGL_n$ can be computed, for instance, using \cite[Corollary 13.6]{BT}.
	If $n$ is even, then we  have $a_J\hs\cdot \hm\upgam\hm a_J=a_J^2=-1$.
	It follows that $\cm\cdot\hm\upgam\hm\cm=\pi(-1)=1$.
	These formulas mean that $\cm$ is a 1-cocycle and that $\Delta[\cm]=[-1]\neq [1]$, whence $[\cm]\neq [1]$, as required.
\end{proof}

\begin{subsec}\label{ss:cp}
	Consider the short exact sequence of real algebraic groups
	\[1\to \U_1\to \U_n\labelto\pi\PU_n\to 1,\]
	where $\U_n$ is the unitary group, that is,
	$\GL(n,\C)$ with complex conjugation $\sigma$ acting
	by  $g\mapsto (\bar g^t)^{-1}$ where $\bar g^t$ denotes the transposed matrix
	to the complex conjugate matrix $\bar g$.
	This exact sequence is the exact sequence \eqref{e:exact-GL}
	of complex algebraic groups, but with another complex conjugation.
	Consider the cocycles
	$a_p=\diag(-1,\dots,-1,1,\dots,1)\in Z^1\U_n$ where  $-1$ appears $p$ times and $1$ appears $n-p$ times,  $0\le p\le n$.
	Set $c_p=\pi(a_p)\in Z^1\PU_n$.
\end{subsec}

\begin{prop}[well-known]
	\label{p:H1-U}
	\[H^1\PU_n=\{\hs[c_p]\ |\ 0\le p\le n/2\}.\]
	In particular, when  $n=2k$, we have $\# H^1 \PU_n=k+1$.
\end{prop}

\begin{proof}[Idea of proof]
	Since $\PU_n$ is a compact group, one can compute $\#H^1\PU_n$ by the method of Borel and Serre.
	See Serre \cite[Section III.4.5,  Example (a) for Theorem 6]{Serre},
	which says that $H^1\PU_n=T_2/W$, where $T$ is the diagonal maximal torus,
	$T_2$ is the set of its elements of order dividing 2,
	and $W=W(\PU_n, T)\cong S_n$ is the corresponding Weyl group
	(which is isomorphic to the symmetric group on $n$ symbols).
\end{proof}

\begin{subsec}\label{ss:semi-direct}
	Consider the cyclic group  $C_4$ of order $4$ with generator $\Theta$. Define an action of $C_4$ on
	$\GL(n,\C)$ by the formula
	\begin{equation*}
	\Theta(g)=g^{-t}\quad\ \text{for}\ \, g\in \GL(n,\C)
	\end{equation*}
	where $g^t$ denotes the transposed matrix to $g$
    and we write
    \[g^{-t}\coloneqq (g^{-1})^t=(g^t)^{-1}.\]
	Note that $\Theta^2$ acts on $\GL(n,\C)$ trivially.
	Set
	\[\tilde\AA=\GL(n,\C)\rtimes C_4.\]
	Define an action of $\Gamma=\{1,\gamma\}$ on $C_4$ by $\upgam (\Theta^r)=\Theta^{-r}$ for $r\in \Z$,
	and let $\Gamma$ act on $\GL(n,\C)$ by the usual formula $\upgam\hm g=\bar g$, the bar denoting complex conjugation.
	Then
	\[\upgam(\Theta(g))=\ov{(g^{-t})}=\bar g\,^{-t}=\Theta(\upgam\hm g)=\Theta^{-1}(\upgam\hm g)=\upgam \Theta(\upgam\hm g),\]
	which shows that the actions of $\gamma$ on $C_4$ and $\GL(n,\C)$ are compatible, and so $\Gamma$ naturally acts on $\tilde \AA$.
	The center  $Z=Z(\GL(n,\C))$ (consisting of the scalar matrices) is clearly $\Theta$-invariant and $\Gamma$-invariant.
	We set
	\[\AA=\tilde\AA/Z=\PGL(n,\C)\rtimes C_4\hs.\]
\end{subsec}

\begin{lem}\label{l:H1:n=2k}
	Consider the $\Gamma$-group $\AA=\PGL(n,\C)\rtimes C_4$ as above, where $n=2k\ge 4$.
	Then $\#H^1\AA=k+3$ with cocycles
	\begin{equation*}
	(1,1),\ (c_J,1),\ (c_0,\Theta),\ (c_1,\Theta),\ \dots,\ (c_k,\Theta)
	\end{equation*}
	where $c_J\in Z^1 \PGL_{n,\R}\subset\PGL(n,\C)$ is as in Proposition \ref{p:H1},
	and $c_p\in Z^1\PU_n\subset\PGL(n,\C)$ for $p=0,1\dots,k$ is as in Proposition \ref{p:H1-U}.
\end{lem}

\begin{proof}
	Consider the short exact sequence
	\begin{equation}\label{e:exact-A}
	1\to \AA_0\to\AA\labelto\pi C_4\to 1
	\end{equation}
	where $\AA_0=\PGL(n,\C)$,
	and consider the induced cohomology exact sequence
	\begin{equation}\label{e:cohom-A}
	H^1\AA_0\to  H^1\AA\labelto{\pi_*} H^1 C_4\hs.
	\end{equation}
	We see from the definition of $H^1$ that
	$H^1 C_4=C_4/\langle\Theta^2\rangle\cong C_2$ with cocycles $1,\Theta$.
	Since the exact sequence \eqref{e:exact-A} splits,
	we see that the map $\pi_*$ in \eqref{e:cohom-A} is surjective.
	Thus
	\[H^1\AA=\pi_*^{-1}[1]\cup\pi_*^{-1}[\Theta].\]
	
	We compute $\pi_*^{-1}[1]=\ker\pi_*$.
	By Serre \cite[Section I.5.5, Corollary 1 of Proposition 39]{Serre},
	the inclusion map
	\[\AA_0\into\AA,\quad a\mapsto (a,1)\quad\text{for}\ \, a\in \AA_0=\PGL(n,\C)\]
	induces a bijection
	\[(H^1\AA_0)/C_4^\Gamma\isoto\ker\pi_*\hs.\]
	Here $C_4^\Gamma=\{1,\Theta^2\}$.
	It acts on the right on $H^1\AA_0$ as follows.
	Let $a\in Z^1 \AA_0$; then
	\[ [a,1]*\Theta^2=\big[(1,\Theta^2)^{-1}\cdot (a,1)\cdot\upgam(1,\Theta^2)\big].\]
	Since $(1,\Theta^2)$ is of order 2, central in $\AA$, and $\Gamma$-fixed, we have
	\begin{equation*}
	(1,\Theta^2)^{-1}\cdot (a,1)\cdot\upgam(1,\Theta^2)
	=(a,1).
	\end{equation*}
	Thus $C_4^\Gamma$ acts on $H^1\AA_0$ trivially, and $\ker\pi_*\cong H^1\AA_0$.
	By Proposition \ref{p:H1} we obtain that
	\[\#\ker\pi_*=2 \quad\ \text{with cocycles}\ \,(1,1),\ (c_J,1).\]
	
	We compute $\pi_*^{-1}[\Theta]$.
	By Serre \cite[Section I.5.5, Corollary 2 of Proposition 39]{Serre},
	the inclusion map (not a homomorphism)
	\[\AA_0\into\AA,\quad\ a\mapsto (a,\Theta)\ \,\text{for}\ \,a\in\AA_0\]
	induces a bijection
	\[(H^1\,_\Theta\AA_0)\hs/\,(_\Theta C_4)^\Gamma\,\isoto\,\pi_*^{-1}[\Theta].\]
	Since $C_4$ is an abelian group, we have $_\Theta C_4=C_4$ and $(_\Theta C_4)^\Gamma= C_4^\Gamma=\{1,\Theta^2\}$.
	It acts on the right on $H^1\hs_\Theta\AA_0$ as follows.
	Let $a\in Z^1 \hs_\Theta\AA_0$; then
	\[ [a,\Theta]*\Theta^2=\big[\hs(1,\Theta^2)^{-1}\cdot (a,\Theta)\cdot\upgam(1,\Theta^2)\hs\big].\]
	We calculate:
	\begin{equation*}
	(1,\Theta^2)^{-1}\cdot (a,\Theta)\cdot\upgam(1,\Theta^2)=
	(1,\Theta^2)^{-1}\cdot (a,\Theta)\cdot(1,\Theta^2)=(\Theta^{-2}(a),\Theta)=(a,\Theta).
	\end{equation*}
	Thus $(\hs_\Theta C_4)^\Gamma$ acts on $H^1\hs_\Theta\AA_0$ trivially, and $\pi_*^{-1}[\Theta]\cong H^1\hs_\Theta A_0$.
	By Proposition \ref{p:H1-U} we obtain that
	\[\#\pi_*^{-1}[\Theta]=k+1 \quad\ \text{with cocycles}\ \,(c_0,\Theta),\ \dots,\ (c_k,\Theta).\]
	which completes the proof of the lemma.
\end{proof}

\begin{subsec}\label{ss:cohom-vs-eq}
	Let $\M$ be a complex supermanifold, and $\mm$ be a fixed real structure on $\M$ of the type $(1,-1,1)$.
	Then  $\mu^2=\id_\M$.
	Write $\Am=\Aut\, \M$, the group of even holomorphic automorphisms of $\M$.
	For $a\in\Am$, we write
	\[\sm(a)=\mu a\mu^{-1}=\mu a \mu.\]
	Then
	\[\sm(\sm(a))=a\quad \text{for all}\ a\in\Am.\]
	We obtain a $\Gamma$-group $(\Am,\sm)$, where
	$\Gamma=\{1,\gamma\}$ acts on $\Am$ by
	\[\upgam\hm a=\sm(a)=\mu a\mu^{-1}\quad \text{for}\ a\in\Am.\]

	Let  $\mm'$ be any  real structure on $\M$ of the type $(1,-1,1)$.
	Write $\mm'=\cm\circ\mm$; then $\cm\in\Aut\,\M$ is an (even) holomorphic  automorphism.
	We have
	\[1=\mm^{\prime\hs\hs2}=(\cm\mm)^2=\cm\mm\cm\mm
	=\cm\cdot\mm\cm\mm^{-1}\cdot \mm^2=\cm\cdot\hm\upgam\hm \cm.\]
	We see that the condition $\mm^{\prime\hs\hs2}=1$ implies the {\em cocycle condition}
	\begin{equation}\label{e:cocycle}
	\cm\cdot\hm\upgam\hm \cm=1.
	\end{equation}
	Conversely, if an automorphism $\cm\in\Aut\,\M$ satisfies \eqref{e:cocycle}, and $\mm'=\cm\mm$, then
	\[\mm^{\prime\hs\hs2}=\cm\mm\hs\cm\mm=\cm\cdot\mm\cm\mm^{-1}\cdot\mm^2
	=(\cm\cdot\hm \upgam \cm)\cdot\mm^2=1,\]
	and hence $\mm'$ is a real structure on $\M$.
	
	Now let
	$\mm_1=\cm_1\circ\mm$ and $\mm_2=\cm_2\circ\mm$ be two real structures on $\M$.
	Assume that the real structures $\mm_1$ and $\mm_2$ are {\em equivalent}, that is,
	$(\M,\mm_1)\simeq (\M,\mm_2)$. This means that
	there exists an automorphism $\am\colon \M\to\M$
	such that
	\begin{equation}\label{e:rho2-rho1}
	\mm_2=\am\circ\mm_1\circ\am^{-1}.
	\end{equation}
	Then
	\[\cm_2\hs\mm=\am\cdot\cm_1\hs\mm\cdot \am^{-1}
	=\am\cdot\cm_1\cdot\hm\sm(\am)^{-1}\cdot\mm,\]
	whence
	\begin{equation}\label{e:equiv}
	\cm_2=\am\cdot\cm_1\cdot\hm\sm(\am)^{-1}=\am\cdot\cm_1\cdot\hm\upgam \am^{-1}.
	\end{equation}
	Conversely, if \eqref{e:equiv} holds for some $\am\in\Aut\,\M$,
	then \eqref{e:rho2-rho1} holds.
	This means that  $\am$ is an isomorphism   $(\M,\mm_1)\isoto (\M,\mm_2)$,
	and therefore the real structures $\mm_1$ and $\mm_2$ are equivalent.
	
	We can state the  results of this subsection as follows:
\end{subsec}

\begin{prop}\label{p:Serre-reduction}
	For $\M$,  $\mm$, and the $\Gamma$-group $(\Am,\sm)$  as in Subsection \ref{ss:cohom-vs-eq},
	define an action of  the group $\Gamma=\{1,\gamma\}$ on $\Am$ by
	\[\upgam\hm \am=\sm(\am)\coloneqq  \mm\cdot\am\cdot\mm^{-1}\ \, \text{for}\  \am\in \Am.\]
	Then:
	\begin{enumerate}
		\item[(i)] The map
		\[\cm\mapsto \cm\circ \mm\quad \text{for}\ \cm\in Z^1(\Am,\sm)\]
		is a bijection between the set of $1$-cocycles $Z^1(\Am,\sm)$ and the set of real structures on $\M$.
		\item[(ii)] This map induces a bijection between $H^1(\Am,\sm)$
		and the set of equivalence classes of real structures on $\M$, sending $[1]\in H^1(\Am,\sm)$ to the equivalence class of $\mm$.
	\end{enumerate}
\end{prop}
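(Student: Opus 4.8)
The plan is to recognize that this Proposition is a formal repackaging of the computations already carried out in Subsection \ref{ss:cohom-vs-eq}: it only remains to state them as the assertion that the explicit map $\cm\mapsto\cm\circ\mm$ is a bijection and that it intertwines the $\Am$-action on $Z^1(\Am,\sm)$ with equivalence of real structures. So I would prove (i) by checking well-definedness, surjectivity, and injectivity, and then deduce (ii) by passing to quotients.

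For part (i), I would first note that the $\G$-action on $\Am$ is legitimate, i.e.\ $\sm^2=\id_\Am$, which is immediate from $\mm^2=\id_\M$. Then, given a cocycle $\cm\in Z^1(\Am,\sm)$, the composite $\mm':=\cm\circ\mm$ is anti-holomorphic, being the composition of the holomorphic automorphism $\cm$ with the anti-holomorphic involution $\mm$, and the computation in Subsection \ref{ss:cohom-vs-eq} preceding \eqref{e:cocycle} gives $(\mm')^2=\cm\cdot\hm\upgam\hm\cm=1$; hence $\mm'$ is a real structure of type $(1,-1,1)$ on $\M$. For surjectivity, given an arbitrary real structure $\mm'$ on $\M$, I would set $\cm:=\mm'\circ\mm^{-1}=\mm'\circ\mm$; as a composition of two anti-holomorphic, $\C$-antilinear automorphisms of $\M^{\R}$ it is an even $\C$-linear, hence holomorphic, automorphism of $\M$, so $\cm\in\Am=\Aut\,\M$, and the relation $(\mm')^2=1$ rearranges into the cocycle condition \eqref{e:cocycle}, so that $\cm\in Z^1(\Am,\sm)$ with $\cm\circ\mm=\mm'$. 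Injectivity is trivial: $\cm_1\circ\mm=\cm_2\circ\mm$ forces $\cm_1=\cm_2$ upon composing on the right with $\mm^{-1}$.

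For part (ii), I would invoke the computation of Subsection \ref{ss:cohom-vs-eq} showing that, for $\mm_i=\cm_i\circ\mm$, the equivalence condition \eqref{e:rho2-rho1} holds for some $\am\in\Aut\,\M$ if and only if \eqref{e:equiv} holds, i.e.\ $\cm_2=\am\cdot\cm_1\cdot\hm\upgam\hm\am^{-1}=\am\star\cm_1$; thus $\mm_1$ and $\mm_2$ are equivalent exactly when $\cm_1$ and $\cm_2$ lie in the same $\Am$-orbit in $Z^1(\Am,\sm)$. Therefore the bijection of (i) carries $\Am$-orbits bijectively onto equivalence classes of real structures, and hence descends to a bijection
\[
H^1(\Am,\sm)=Z^1(\Am,\sm)/\Am\ \isoto\ \{\text{equivalence classes of real structures on }\M\},
\]
which sends the neutral class $[1]$ to the class of $1\circ\mm=\mm$.

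Since all the substantive content is already present in Subsection \ref{ss:cohom-vs-eq}, there is no genuine obstacle here; the only point requiring a little care is the bookkeeping in the surjectivity step, namely checking that $\cm=\mm'\circ\mm$ lands in $\Aut\,\M$ — that composing two anti-holomorphic (so $\C$-antilinear, sheaf-theoretically $\mcO\to\ov{\mcO}$) automorphisms of $\M^{\R}$ yields an even, $\C$-linear automorphism of $\M$ rather than merely of $\M^{\R}$.
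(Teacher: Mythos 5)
Your proof is correct and takes essentially the same approach as the paper: the paper states the proposition as a summary of the computations already carried out in Subsection \ref{ss:cohom-vs-eq} (giving no separate proof), and you simply organize those same computations into the standard well-definedness/surjectivity/injectivity/descent-to-quotients format. The one point you rightly flag as needing care — that $\cm=\mm'\circ\mm$ is a holomorphic (not merely real-analytic) automorphism, being a composition of two anti-holomorphic maps — is handled correctly.
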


This proposition is similar to Proposition 5 in Section III.1.3 of Serre's book \cite{Serre}.

Now let $\M$ be the $\Pi$-symmetric Grassmannian  $\PiG_{n,k}$. Theorem \ref{t:Aut} describes the automorphism group $\Aut\hs\M=\Aut(\PiG_{n,k})$.
We wish to compute $H^1\hs\Aut(\PiG_{n,k})$ and to classify the real structures on $\PiG_{n,k}$.
We use the notation of Propositions \ref{p:H1} and \ref{p:H1-U}.

\begin{thm}\label{t:Aut-M}
	The list below gives the cardinality $\#H^1\hs\Aut(\PiG_{n,k})$ and a set of representing cocycles
	for all cohomology classes in $H^1\hs\Aut(\PiG_{n,k})$:
	\begin{enumerate}

		\item[\rm (i)] Case $n$ is odd: $\#H^1=2$ with representatives:
\[(1,1), (1,\ps)\,\in\, \PGL_n(\C)\times \{\id,\ps\}.\]

		\item[\rm (ii)] Case $n$ is even, $n\neq 2k$: $\#H^1=4$ with representatives:
\[(1,1),(1,\ps),(\cm_J,1),(\cm_J,\ps)\,\in\, \PGL_n(\C)\times \{\id,\ps\}.\]

		\item[\rm (iii)] Case $n=2k\ge 4$: $\#H^1=k+3$ with representatives:
\[(1,1),(\cm_J, 1),\,(c_0,\Theta), (c_1,\Theta),\dots,(c_k,\Theta)\,\in\,\PGL_{2k}(\C)\rtimes \{\id,\Theta,\ps,\ps\circ\Theta\}.\]

		\item[\rm (iv)] Case $n=2$, $k=1$: $\#H^1=2$ with representatives:
\[(1,1),(\cm_J,1)\,\in\,\PGL_2(\C)\times\C^*.\]
	\end{enumerate}
\end{thm}

\begin{proof}
	(i) and (ii) follow   from  Theorem \ref{t:Aut}(1),  Remark \ref{r:H1-obvious}(1), and Proposition \ref{p:H1}.
	
	(iii) follows from Theorem \ref{t:Aut}(2) and Lemma \ref{l:H1:n=2k}.
	
	(iv) for $n=2$, $k=1$ by Theorem \ref{t:Aut}(3)  we have $\Aut(\PiG_{n,k})\cong\GmR\times\PGL_{2,\R}$.
	By Hilbert's Theorem 90, we have $H^1(\R,\GmR)=1$. It is well known that $\#H^1(\R,\PGL_{2,\R})=2$
	with cocycles $1,\cm_J$; see, for instance, \cite{Borovoi22-CiM}, Theorem 3.1.
\end{proof}

We denote by $\GL_{n,\R}$\hs, $\GL_{n'\!,\hs\H}$\hs, etc. algebraic $\R$-groups,
and by  $\GL(n,\R)$, $\GL(n'\!,\hs\H)$, etc. their groups of $\R$-points.
Here $\H$ is the division algebra of Hamilton's quaternions.

\begin{lem}[well-known]
	Let $G=\GL_{n,\R}$ and assume  that $n$ is even, $n=2n'$.
	Let $\cm=\pi(a_J)\in Z^1\big(\R,\PGL_n(\mathbb C)\big)$ be as in Proposition \ref{p:H1}.
	Then the twisted group $_\cm G$ is isomorphic to $\GL_{n'\!,\hs\H}$.
	In other words,
	\[\big\{g\in\GL(n,\C)\ \big|\  \cm\cdot\bar g\cdot \cm^{-1}=g\big\}\cong \GL(n',\H).\]
\end{lem}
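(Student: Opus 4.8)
The plan is to produce an explicit isomorphism of $\R$-algebras between $M_{n'}(\H)$ and the $\Gamma$-fixed subalgebra of the twisted algebra ${}_\cm M_n(\C)$, and then to read off the assertion by passing to groups of units. First I would recall the standard embedding $\iota\colon\H\into M_2(\C)$ that sends a quaternion with complex components $z,w$ to $\SmallMatrix{z&w\\-\bar w&\bar z}$; cf.\ \eqref{e:ijk}. A one-line matrix computation shows that, for $M=\SmallMatrix{a&b\\c&d}\in M_2(\C)$ and $J=\SmallMatrix{0&1\\-1&0}$, one has $J\hs\overline M\hs J^{-1}=\SmallMatrix{\bar d&-\bar c\\-\bar b&\bar a}$, so that $J\hs\overline M\hs J^{-1}=M$ precisely when $M=\SmallMatrix{a&b\\-\bar b&\bar a}$. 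Since $\iota$ is $\R$-linear, injective, multiplicative and unital, it identifies $\H$, as an $\R$-algebra, with the $\R$-subalgebra $\{M\in M_2(\C)\mid J\hs\overline M\hs J^{-1}=M\}$ of $M_2(\C)$.

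Next I would write $\C^n=(\C^2)^{\oplus n'}$, so that an element of $M_n(\C)$ is an $n'\times n'$ array $(g_{ij})$ of $2\times2$ blocks and $a_J=\operatorname{diag}(J,\dots,J)$ becomes \emph{block-scalar}. Applying $\iota$ entrywise to blocks gives an injective homomorphism of $\R$-algebras $\widetilde\iota\colon M_{n'}(\H)\into M_n(\C)$, $(q_{ij})\mapsto(\iota(q_{ij}))$. Because $a_J$ is block-diagonal with every diagonal block equal to $J$, the twisted conjugation $\sm_\cm\colon g\mapsto a_J\hs\bar g\hs a_J^{-1}$ on $\GL(n,\C)$ (and on $M_n(\C)$) acts block by block, sending $(g_{ij})$ to $(J\hs\overline{g_{ij}}\hs J^{-1})$. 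Therefore $g=(g_{ij})\in M_n(\C)$ satisfies $\sm_\cm(g)=g$ if and only if every block $g_{ij}$ lies in $\iota(\H)$, i.e.\ if and only if $g$ belongs to the image of $\widetilde\iota$; thus $\widetilde\iota$ identifies $M_{n'}(\H)$ with $\{g\in M_n(\C)\mid\sm_\cm(g)=g\}$.

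Finally, since $\sm_\cm$ is a ring automorphism of $M_n(\C)$, an invertible $g$ and its inverse are $\sm_\cm$-fixed simultaneously; hence the group of $\sm_\cm$-fixed elements of $\GL(n,\C)$ is exactly $\widetilde\iota(M_{n'}(\H))\cap\GL(n,\C)$, which is the group of units of $\widetilde\iota(M_{n'}(\H))$ and therefore isomorphic via $\widetilde\iota$ to $\GL(n',\H)$; this gives the displayed identity. To upgrade this to an isomorphism of algebraic $\R$-groups ${}_\cm\GL_{n,\R}\isoto\GL_{n'\!,\H}$ one either repeats the blockwise construction with an arbitrary commutative $\R$-algebra in place of $\R$, or invokes the standard fact that an inner twist of $\GL_{n,\R}$ is the unit group of a central simple $\R$-algebra of degree $n$, which — as $\#\operatorname{Br}(\R)=2$ — is either $M_n(\R)$ or $M_{n'}(\H)$, the nontrivial class $[\cm]$ of Proposition \ref{p:H1} producing the latter. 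I expect the only delicate point to be the bookkeeping in the middle step: one must use that $a_J$ is block-scalar, so that $\sm_\cm$ is literally the entrywise application to blocks of the involution $M\mapsto J\overline M\hs J^{-1}$ of $M_2(\C)$ whose fixed algebra is $\iota(\H)$, and one must keep the conventions for $\iota$ and for $J^{-1}=-J$ consistent with \eqref{e:ijk}.
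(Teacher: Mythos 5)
Your argument is correct and is essentially the paper's proof: both reduce to the blockwise computation inside $M_n(\C)$ using that $a_J$ is block-diagonal with each $2\times 2$ block equal to $J$, and both identify the $2\times 2$ fixed subalgebra $\{Y : J\hs\overline Y\hs J^{-1}=Y\}$ with $\H$ via the standard embedding. The only difference is cosmetic (you start from $\iota:\H\into M_2(\C)$ and verify its image is the fixed algebra, while the paper starts from the fixed-point condition and recognizes $\H$), plus your optional closing remark on upgrading to algebraic groups, which the paper does not need since it works directly with $\R$-points.
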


\begin{proof}
	Let $M_n(\R)$ denote the algebra of $n\times n$-matrices over $\R$.
	Then $G(\R)=\GL(n,\R)=M_n(\R)^*$.
	In order to compute $_\cm G$, it suffices to compute
	the twisted algebra $_\cm M_n$.
	
	Let $X\in M_n(\C)$. We write $X$ as a block matrix
	\[ X=(X_{ij})_{1\le i,j\le n'}\hs,\quad\text{where}\ X_{ij}\in M_2(\C).\]
	Then $X\in \hs_\cm M_n(\R)$ if and only if
	\[a_J\cdot\ov X\cdot a_J^{-1}=X,\]
	that is
	\begin{equation}\label{e:Xij}
	J\cdot\ov X_{ij}\cdot J^{-1}=X_{ij}\quad\text{for all}\  i,j.
	\end{equation}
	
	We write $Y$ for $X_{ij}$. Then condition \eqref{e:Xij} is equivalent to
	\begin{equation}\label{e:Y}
	J\cdot \ov Y=Y\cdot J.
	\end{equation}
	An easy calculation shows that  condition \eqref{e:Y} on $Y$ means that
	\[Y=\Mat{u &v\\ -\bar v &\bar u},\quad\text{where}\ u,v\in\C.\]
	In other words,
	\[ Y=\lambda_1 +\lambda_2 \ii+\lambda_3 \jj+\lambda_4 \kk,\]
	where $\lambda_1, \dots,\lambda_4\in\R$ and
	\begin{equation}\label{e:ijk}
	\ii=\Mat{0&1\\-1&0},\quad \jj=\Mat{0&i\\i&0},\quad \kk=\Mat{i&0\\0&-i}.
	\end{equation}
	One checks that
	\[\ii^2=-1=\jj^2,\qquad \ii\jj=\kk=-\jj\ii.\]
	Thus
	\[(\hs_\cm M_2)(\R)\cong \H,\quad (\hs_\cm M_n)(\R)\cong M_{n'}(\H),
	\quad (\hs_\cm\!\GL_n)(\R)\cong\GL_{n'}(\H),\]
	and hence $_\cm\!\GL_n\cong \GL_{n'\!,\hs\H}$\hs, as required.
\end{proof}

\begin{prop}\label{p:real-points}
	Let $n=2n'$ be an even natural number. Consider the Grassmann variety
	$X=\Gr_{n,k,\R}$ over $\R$ and its twisted form
	$_\cm X\coloneqq(\Gr_{n,k}\hs,\hs\cm\circ\mm)$, where $\cm=\pi(a_J)\in Z^1\PGL_n(\mathbb C)$
	is as in  Proposition \ref{p:H1}, and $\mm$ is the standard complex conjugation in $\Gr_{n,k,\C}$\hs.
	\begin{enumerate}
		\item[\rm(i)] if $k$ is even, $k=2k'$, then the set of real points $(_\cm X)(\R)$
		is in a canonical bijection with the set $\Gr_{n'\!,\hs k'}(\H)$
		of quaternionic $k'$-dimensional subspaces in the quaternionic $n'$-dimensional space  $\H^{\hs n'}$.
		In other words,
		\[\big\{x\in\Gr_{n,k}(\C)\ \big|\  \cm\cdot \bar x=x\big\}\cong \Gr_{n'\!,\hs k'}(\H).\]
		
		\item[\em(ii)] If $k$ is odd, then the set $(_\cm X)(\R)$ is empty.
	\end{enumerate}
\end{prop}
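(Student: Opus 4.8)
The plan is to describe the twisted $\R$-form $_\cm X$ on the level of points and then to reduce the computation of its real points to elementary linear algebra over $\H$, reusing the matrix computation carried out in the preceding lemma.

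First I would recall the standard description of the points. We have $X(\C)=\Gr_{n,k}(\C)$, the set of $k$-dimensional complex subspaces $W\subseteq\C^n$; the standard real structure $\mm$ acts on it by $W\mapsto\ov W$ (coordinate-wise complex conjugation), while the class $\cm=\pi(a_J)\in\PGL_n(\C)$ acts by $W\mapsto a_J W$, independently of the choice of the representative $a_J$. By the definition of a twisted form, and since $a_J$ has real entries, the Galois conjugation on $(_\cm X)(\C)$ is the involution $W\mapsto a_J\ov W$ (it squares to the identity because $a_J^2=-E_n$, in accordance with the cocycle relation $\cm\cdot\hm\upgam\hm\cm=1$ from Proposition \ref{p:H1}). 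Hence
\[ (_\cm X)(\R)=\big\{\,W\in\Gr_{n,k}(\C)\ \big|\ a_J\hs\ov W=W\,\big\}. \]

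Next I would set $j\colon\C^n\to\C^n$, $v\mapsto a_J\ov v$. A short check shows that $j$ is antilinear with $j^2=-\id$, that is, $j$ is a quaternionic structure on $\C^n$; letting $\jj$ act by $j$ and using $\H=\C\oplus\C\hs\jj$ turns $\C^n$ into an $\H$-vector space, which in the block decomposition of the preceding lemma is precisely $\C^n\cong\H^{\hs n'}$ via the matrices $\ii,\jj,\kk$ of \eqref{e:ijk}. A complex subspace $W$ satisfies $a_J\ov W=W$ exactly when $jW=W$, i.e. exactly when $W$ is an $\H$-subspace of $\H^{\hs n'}$. Since $\H$ is a division algebra, any such $W$ is $\H$-free, so $\dim_\C W=2\dim_\H W$ is even. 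This proves (ii): for odd $k$ there is no admissible $W$, hence $(_\cm X)(\R)=\emptyset$. For $k=2k'$ it gives a bijection between $(_\cm X)(\R)$ and the set of $\H$-subspaces of $\H^{\hs n'}$ of quaternionic dimension $k'$, that is, $\Gr_{n'\!,\hs k'}(\H)$, which proves (i); the bijection is canonical since it uses only the identification $\C^n\cong\H^{\hs n'}$ determined by $a_J$.

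The argument is essentially routine, so I do not expect a real obstacle. The one point deserving a little care is the first step — verifying that the $\R$-points of the twisted variety are genuinely the fixed locus of $W\mapsto a_J\ov W$ and not of some other conjugate action; this is the standard Galois-descent bookkeeping (cf. \cite{Serre}, Section III.1, and the discussion in Subsection \ref{ss:cohom-vs-eq}). Once that is in place, everything reduces to the linear algebra that the preceding lemma has already done at the level of matrices, and the content of the proposition is just its global, subspace-level reformulation.
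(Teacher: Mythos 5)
Your proof is correct, but it takes a genuinely different route from the paper's. You argue directly at the level of subspaces: you observe that $j(v)=a_J\bar v$ is an antilinear map with $j^2=-\id$, so it endows $\C^n$ with the structure of a right $\H$-module isomorphic to $\H^{\hs n'}$ via \eqref{e:ijk}, and then a complex $k$-plane $W$ is fixed under $W\mapsto a_J\ov W$ iff it is an $\H$-submodule, which (since $\H$ is a division algebra) forces $\dim_\C W=2\dim_\H W$ to be even and, for $k=2k'$, gives exactly $\Gr_{n'\!,\hs k'}(\H)$. The paper instead works entirely through stabilizers and Galois cohomology: it passes from a point $x$ to its parabolic stabilizer $P_x\subset\hs_\cm G\cong\GL_{n'\!,\hs\H}$, uses $H^1\hs P_{\H,0}=1$ and the fact that the normalizer of $P_{\H,0}$ is $P_{\H,0}$ itself to find an $\R$-rational conjugating element $g_x$ via a torsor argument, and disposes of the odd-$k$ case by Lemma \ref{l:even}, which shows every $\R$-parabolic of $\GL_{n'\!,\hs\H}$ has codimension divisible by $4$, whereas $P_x$ has odd codimension $k(2n'-k)$. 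Your argument is shorter and more elementary, requiring only that $\H$ is a division ring; the paper's argument is heavier machinery but is set up to generalize to other homogeneous spaces $G/P$ where a direct linear-algebra description of the twist is less transparent, and it makes explicit the role of the vanishing of $H^1$ of the parabolic, which is the structural reason the bijection exists. Both correctly identify $(_\cm X)(\R)$ with the fixed locus of $W\mapsto a_J\ov W$ as the starting point, and both establish the same bijection; there is no gap in your argument.
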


\begin{proof}		
	\label{ss:real-points}
	Write $V=\R^n$ with canonical basis $e_1,\dots,e_n$.
	Write $G=\GL_{n,\R}$,
	$X=\Gr_{n,k,\R}$.
	To any complex point $x\in X(\C)$ we assign its stabilizer $\Pm_x\subset G_\C$.
	Let $x_0\in X(\R)$ denote the point corresponding to the $k$-dimensional subspace
	$W_0=\langle e_1,\dots,e_k\rangle\subset V$, and write $\Pm_0=\Stab_{G_\C}(x_0)$.
	Then
	\[\Pm_0=\left\{\Mat{A&B\\0&D}\ \bigg|\ A\in M_k(\C)\right\}.\]
	We have a canonical {\em $\G$-equivariant} bijection
	\[x\mapsto \Pm_x,\quad\ x\in X(\C),\ \Pm_x\subset G_\C,\ \text{$\Pm_x$ is conjugate to $\Pm_0$.}\]
	We twist $G$ and $X$ with the same cocycle $\cm=\pi(a_J)\in Z^1\PGL_n(\mathbb C)$; then
	the map $x\mapsto \Pm_x$ is $\G$-equivariant also with respect to the twisted real structures
	corresponding to the twists $_\cm X$ and $_\cm G$.
	
	If $k$ is even, $k=2k'$, we consider the subgroup
	\[ \Pmm_{\H,0}=\left\{\Mat{A&B\\0&D}\ \bigg|\ A\in M_{k'}(\H)\right\}\subset \GL_{n'\hm,\H}.\]
	We embed $\H$ into $\M_2(\C)$ using the formulas \eqref{e:ijk}.
	In this way we obtain a canonical isomorphism
	\[\GL_{n'\hm,\H}\times_\R\hs\C\isoto \GL_{n,\C}\]
	sending $\Pmm_{\H,0}\times_\R\C$ to $\Pm_0$.
	Note that $H^1\hs \Pmm_{\H,0}=1$.
	Since the subgroup $\Pmm_{\H,0}\subset\hs_\cm G$ is defined over $\R$,
	we see that the corresponding point $x_0\in X(\C)=(\hs_\cm X)(\C)$ is defined over $\R$.
	We conclude that $(\hs_\cm X)(\R)$ is nonempty.
	
	To $x_0$ we assign the subspace
	\[W_{\H,0}=\langle e_1,\dots,e_{k'}\rangle \subset \H^{\hs n'}.\]
	For any $k'$-dimensional quaternionic subspace $W_\H\subset \H^{n'}$
	there exists an invertible matrix $g\in \GL(n',\H)$ such that $W_\H=g\cdot W_{\H,0}$,
	and we obtain a subgroup $\Pm:=g\hs \Pm_0\hs g^{-1}\subset (\hs_\cm G)$
	that is defined over $\R$ and is conjugate over $\R$ to $\Pm_0$.
	To this subgroup we assign the real point $x=g\cdot x_0\in (\hs_\cm X)(\R)$ with stabilizer $\Pm$.
	
	Conversely, if $x\in(\hs_\cm X)(\R)$, we set $\Pm_x=\Stab\hs_{_\cm G}\hs(x)\subset \hs_\cm G$.
	Then $\Pm_x\subset \hs_\cm G$ is a subgroup
	defined over $\R$ and conjugate to $\Pm_0=\Pmm_{\H,0}$ over $\C$.
	Set
	\[T_x=\{g\in (\hs_\cm G)(\C)\mid g\hs \Pm_0\hs g^{-1}=\Pm_x\}.\]
	This variety is clearly defined over $\R$ and nonempty over $\C$.
	Moreover, it is a torsor (principal homogeneous space)
	under the normalizer $N$ of $\Pmm_{\H,0}$ in $\hs_\cm G$.
	Since $N=\Pmm_{\H,0}$ and $H^1\hs \Pmm_{\H,0}=1$, we conclude that $T_x$ has an $\R$-point $g_x$;
	see Serre \cite[Section I.5.2]{Serre}.
	Thus there exists $g_x\in (\hs_\cm G)(\R)$ such that $\Pm_x=g_x\cdot \Pmm_{\H,0}\cdot g^{-1}$.
	To $x$ we assign the subspace $W_x=g_x\cdot W_{\H, 0}\subset \H^{\hs n'}$
	(which does not depend on the choice of $g_x\in T_x(\R)$).
	Thus we obtain a bijection
	\[(\hs_\cm X)(\R)\to \Gr_{n'\!,\hs k'}(\H),\quad\ x\mapsto (\Pm_x,\hs g_x)\mapsto g_x\cdot W_{\H,0}\hs,\]
	which proves assertion (i) of Proposition \ref{p:real-points}.
	
	If $k$ is odd, let $x\in\Gr_{2n'\hm, k\hs}(\C)$ be a $\C$-point.
	The stabilizer $\Pm_x$ of $x$ is a parabolic subgroup of {\em odd} codimension $k(2n'-k)$.
	On the other hand, by Lemma \ref{l:even} below,
	any {\em defined over $\R$} parabolic subgroup  of $_\cm G$ has  {\em even} codimension.
	It follows the parabolic subgroup $\Pm_x$ is not defined over $\R$, and hence $x$ is not an $\R$-point.
	We conclude that when $k$ is odd, $_\cm X$ has no real points,
	which proves assertion (ii) of Proposition \ref{p:real-points}.
\end{proof}

\begin{lem}\label{l:even}
	Set $Q\subseteq \cG\cong\GL_{n'\hm,\hs\H}$ be a parabolic $\R$-subgroup.
	Then the codimension of $Q$ in $\cG$ is divisible by 4.
\end{lem}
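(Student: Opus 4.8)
The plan is to pass to the complexification and reduce the assertion to a dimension count for a complex partial flag variety. Recall that $\cG$ is obtained from $\GL_{2n'\!,\R}$ by twisting with the cocycle $\cm=\pi(a_J)$; in particular $\cG_\C\cong\GL_{2n'\!,\C}$, and the nontrivial element of $\G$ acts on $\cG_\C$ by $\tau\colon g\mapsto a_J\hs\bar g\hs a_J^{-1}$, where $a_J^2=-E_{2n'}$. For a parabolic $\R$-subgroup $Q\subseteq\cG$, base change to $\C$ commutes with forming the homogeneous space $Q\backslash\cG$, so
\[{\rm codim}(Q,\cG)=\dim_\C\cG_\C-\dim_\C Q_\C=\dim_\C\big(\GL_{2n'\!,\C}/Q_\C\big).\]
Hence it suffices to prove that the complex partial flag variety $\GL_{2n'\!,\C}/Q_\C$ has dimension divisible by $4$.

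First I would describe the flag stabilized by $Q_\C$. Since $a_J$ is a real matrix with $a_J^2=-E_{2n'}$, the conjugate-linear map $j\colon\C^{2n'}\to\C^{2n'}$ given by $v\mapsto a_J\hs\bar v$ satisfies $j^2=-\id$; together with scalar multiplication by $\sqrt{-1}$ it endows $\C^{2n'}$ with the structure of an $\H$-module, which we identify with $\H^{\hs n'}$. Now $Q_\C$ is the stabilizer of its canonical (finest) flag $0=F_0\subsetneq F_1\subsetneq\cdots\subsetneq F_r=\C^{2n'}$, and the condition $\tau(Q_\C)=Q_\C$ forces $a_J\hs\overline{F_i}=F_i$, that is $j(F_i)=F_i$, for every $i$; thus each $F_i$ is a quaternionic submodule of $\H^{\hs n'}$. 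Consequently $\dim_\C F_i=2d_i$, where $d_i=\dim_\H F_i$ and $0=d_0<d_1<\cdots<d_r=n'$.

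Next I would carry out the dimension count. Put $f_i=d_i-d_{i-1}\ge 1$, so that $\sum_{i=1}^r f_i=n'$ and the jumps in $\C$-dimension along the flag $(F_i)$ are $2f_1,\dots,2f_r$. The standard formula for the dimension of a partial flag variety then gives
\[\dim_\C\big(\GL_{2n'\!,\C}/Q_\C\big)=\sum_{1\le i<j\le r}(2f_i)(2f_j)=4\sum_{1\le i<j\le r}f_i\hs f_j\hs,\]
which is divisible by $4$; equivalently, one computes $\dim Q_\C=\sum_{i=1}^r(2f_i)^2+\sum_{1\le i<j\le r}(2f_i)(2f_j)$ and subtracts it from $(2n')^2=\big(\sum_{i=1}^r 2f_i\big)^2$. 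Since every parabolic $\R$-subgroup of $\cG$ is $\cG(\R)$-conjugate to one of the standard subgroups described above, and codimension is conjugation-invariant, this proves the lemma.

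The one step that genuinely needs care is the middle one: checking that the $\tau$-stable parabolic subgroups of $\GL_{2n'\!,\C}$ are exactly the stabilizers of flags of quaternionic submodules of $\H^{\hs n'}$. Here one uses that a parabolic subgroup of $\GL_{2n'\!,\C}$ determines its canonical flag uniquely, so that $\tau$-invariance of $Q_\C$ transfers to $\tau$-invariance of that flag. Granting this, the divisibility by $4$ falls out of the flag-variety dimension formula precisely because every dimension gap $2f_i$ is even.
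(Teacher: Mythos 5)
Your argument is correct, but it takes a genuinely different route from the paper's. The paper works intrinsically over $\R$: it fixes the maximal split $\R$-torus $S\subset\cG\cong\GL_{n',\H}$ of real diagonal matrices, observes that each root subspace $\g_\beta$ of the relative root system $R(\cG,S)$ is $4$-dimensional over $\R$ (each off-diagonal quaternionic entry), and then invokes the Borel--Tits classification (\cite{Borel-Tits}, Propositions 5.12, 5.14) to reduce an arbitrary $\R$-parabolic to a standard one, whose codimension is a sum of $\dim\g_\beta$'s, hence divisible by $4$. You instead complexify, identify $\cG_\C\cong\GL_{2n',\C}$ with its twisted real structure $\tau(g)=a_J\hs\bar g\hs a_J^{-1}$, use the uniqueness of the flag stabilized by a parabolic to show that a $\tau$-stable parabolic must stabilize a flag of $j$-stable (hence quaternionic) subspaces with even $\C$-dimension jumps $2f_i$, and finish with the flag-variety dimension formula $\sum_{i<j}(2f_i)(2f_j)=4\sum_{i<j}f_if_j$. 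Your approach is more elementary and self-contained (no relative root systems, no Borel--Tits reduction needed), and it makes the source of the factor $4$ concrete: one factor of $2$ from each side of the quaternionic constraint on the flag. The paper's approach is shorter once Borel--Tits is available and stays entirely within the real algebraic group framework. One cosmetic remark: your closing sentence about $\cG(\R)$-conjugacy to standard subgroups is unnecessary, since your dimension count already applies directly to an arbitrary parabolic $\R$-subgroup $Q$ via its complexification $Q_\C$; you may simply delete it.
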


\begin{proof}[Idea of proof]
	Let $S\subset\cG$ be the $\R$-subtorus such that
	\[S(\R)=\big\{{\rm diag}(\lambda_1,\dots,\lambda_{n'})\ |\ \lambda_i\in\R^*\big\}.\]
	Then $S$ is a maximal split $\R$-torus in $\cG$.
	Consider the relative root system $R(\cG, S)$.
	It is easy to see that the root subspace $\g_\beta$ for any root $\beta\in R(\cG,S)$ has dimension 4.
	Let $P\subset \cG$ be the $\R$-parabolic (parabolic $\R$-subgroup) such that $P(\R)$
	is the group of upper triangular quaternionic matrices in $\GL(n',\H)$;
	then $P$ is a minimal $\R$-parabolic in $\cG$.
	Let $P'\supseteq P$ be any {\em standard $\R$-parabolic} in $\cG$ with respect to $S$ and $P$
	in the sense of Borel and Tits \cite[Section 5.12]{Borel-Tits}; then
	\[\Lie(\cG)=\Lie(P')\oplus\bigoplus_{\beta\in\Xi}\g_\beta\hs,\]
	where $\Xi\subseteq R(\cG,S)$ is some subset.
	It follows that the codimension of $P'$ is divisible by 4.
	By \cite[Proposition 5.14]{Borel-Tits}, any  $\R$-parabolic $Q$ of $\cG$
	is conjugate (over $\R$) to a standard $\R$-parabolic,
	and the lemma follows.
\end{proof}

\begin{subsec}
	If $V$ is a complex vector space, $W\subset V$ a subspace, and $B$
	is a bilinear form (or a Hermitian form) on $V$,
	then we write
	\[ W^{\bot B}=\{x\in V\ |\ B(x,y)=0\ \,\text{for all}\ \,y\in W\}\]
	for the annihilator of $W$ in $V$ with respect to $B$.

	Let $n=2k$.
	The group $\AA=\PGL_{n,\R}\rtimes C_4$ of Subsection \ref{ss:semi-direct},
	where $C_4=\{1,\Theta,\Theta^2,\Theta^3\}$,
	acts by conjugation on its identity component $G=\PGL_{n,\R}$.
	Therefore, for any cocycle $c\in Z^1\AA$ we obtain a twisted group $_c G$.
	
	Moreover, the group $\AA$ naturally acts on $X\coloneqq \Gr_{n,k}(\C)$.
	Namely, for a $k$-dimensional subspace $W$ of $V=\C^n$,
	and $[g]\in\PGL(n,\C)$, the class of $g\in\GL(n,\C)$,
	we have  $[g]*W=g(W)$.
	Furthermore, the generator $\Theta$ of $C_4$
	sends $W$ to $W^{\bot B_0}$,
	the annihilator of $W$ in $V=\C^n$
	with respect to the symmetric bilinear form $B_0$
	with matrix $\diag(1,\dots,1)$.
	
	We show that
	this action of $\AA$ on $X(\C)$ is well defined.
	For $g\in \PGL(n,\C)$ we compute
	\begin{align*}
	\Theta(g(W))&=g(W)^{\bot B_0}\\
	&=\{x\in V\ |\ B_0(x,gW)=0\}\\
	&=\{x\in V\ |\ B_0(g^t x,W)=0\}\\
	&=\{g^{-t}y\ |\ B_0(y,W)=0\}
	\qquad
	y=g^t x,\ \,x=g^{-t}y\\
	&=g^{-t} \cdot W^{\bot B_0}\\
	&=\Theta(g)\cdot\Theta(W).
	\end{align*}
	Thus
	\[\Theta(g(W))=\Theta(g)\cdot\Theta(W)\]
	as required.

	For any cocycle $c\in Z^1\AA$ we obtain a twisted variety $_c X$.
\end{subsec}

\begin{lem}
	Consider an element $c=(F,\Theta)$ where $F\in \PGL(n,\C)$.
	\begin{enumerate}
		\item[\rm (i)] $c$ is a cocycle if and only if $\ov F\hs^t=F$.
		
		\item[\rm (ii)] If  $c=(F,\Theta)$ is a cocycle and $c'=(F',\Theta)$ with $F'=gF\gbar^t$, then $c'\sim c$.
	\end{enumerate}
\end{lem}

\begin{proof}
	(i) We write the cocycle condition $c\bar c=1$:
	\[(F,\Theta)\cdot(\ov F,\Theta^{-1})=(1,1),\]
	that is,
	\[F\cdot \ov F\hs^{-t}=1.\]
	Thus
	\[ \ov F\hs^t=F,\]
	as required.
	
	(ii) We consider the equivalent cocycle
	\begin{align*}
	(g,1)\cdot (F,\Theta)\cdot\ov{(g,1)}\hs^{-1}
	&= (g,1)\cdot \big(F\cdot (\gbar\hs^{-1})^{-t}, \Theta\big)\\
	&=\big(g\!\cdot\! F\!\cdot\!\gbar\hs ^t,\Theta\big)=(F',\Theta),
	\end{align*}
	as required.
\end{proof}

Let $c=(F,\Theta)$ be a cocycle.  We compute the twisted group $_c G$ for $G=\PGL_{n,\R}$
and the twisted manifold $_c X$ for $X=\Gr_{n,k,\R}$.

\begin{lem}\label{l:cG(R)}
	For a cocycle $c=(F,\Theta)$ consider the twisted group $_c G$ with $G=\PGL_{n,\R}$.
	Then
	\[(\hs_c G)(\R)=\{g\in \PGL(n,\C)\ |\ g\hs F\hs\bar g\hs^t=F\}. \]
\end{lem}

\begin{proof}
	
	First, note that $c^{-1}=(F^t,\Theta^{-1})$. Indeed,
	\[(F,\Theta)\cdot(F^t,\Theta^{-1})=(F\cdot(F^t)^{-t},1)=(F\cdot F^{-1},1)=(1,1).\]
	
	Now,
	\begin{align*}
	(\hs_c G)(\R)
	&=\big\{g\ |\ c(\gbar,1) c^{-1}=(g,1)\big\}\\
	&=\big\{g\ |\ (F,\Theta)\cdot(\gbar,1)\cdot(F^t, \Theta^{-1})= (g,1)\big\}.
	\end{align*}
	We calculate:
	\begin{align*}
	(F,\Theta)\cdot(\gbar,1)\cdot(F^t, \Theta^{-1})
	&=(F,\Theta)\cdot (\gbar F^t,\Theta^{-1})\\
	&=(F\cdot(\gbar F^t)^{-t},1)\\
	&=(F\hs\gbar\hs^{-t} F^{-1},1).
	\end{align*}
	We obtain
	\begin{align*}
	(\hs_c G)(\R)&=\{g\in\PGL(n,\C)\ |\ g=F\hs\gbar\hs^{-t} F^{-1}\}\\
	&=\{g\in \PGL(n,\C)\ |\ g F \gbar^t=F\},
	\end{align*}
	as required.
\end{proof}

\begin{lem}\label{l:cX(R)}
	Let $c=(F,\Theta)$, $\ov F\hs^t=F$ (then $F,\Theta)$ is a 1-cocycle).
	Then
	$$(\hs_c X)(\R)=\{W\subset V\ |\ \ \dim W=k\ \ \text{and}\ \  \bar x\hs^t\hs F^{-1}y=0\ \text{for all}\ \,x,y\in W\}.$$
\end{lem}

\begin{proof}
	We write
	\[(\hs_c X)(\R) = \{W\in\Gr_k(V)\ \,|\ \,(F,\Theta)\cdot\ov W=W\}.\]
	We have
	\[ (F,\Theta)\cdot\ov W=F\cdot \ov W^{\bot B_0}.\]
	Thus the condition for $W$  to lie in $(\hs_c X)(\R)$ is
	\begin{equation}\label{e:F-W-bar}
	F\cdot \ov W^{\bot B_0}=W,
	\end{equation}
	or, equivalently,
	\begin{equation}\label{e:Wbar-bot--B0}
	\ov W^{\bot B_0}=F^{-1}\cdot W.
	\end{equation}
	This implies
	\[B_0(\bar x,F^{-1}\cdot y)=0\quad\ \text{for all}\ \, x,y\in W,\]
	that is,
	\begin{equation}\label{e:xF-1y}
	\bar x \hs^t F^{-1} y=0\quad\ \text{for all}\ \, x,y\in W.
	\end{equation}
	Conversely, if \eqref{e:xF-1y} holds,
	then
	\[\ov W^{\bot B_0}\hs\supseteq F^{-1}\cdot W,\]
	and comparing the dimensions, we obtain in turn the equalities \eqref{e:Wbar-bot--B0} and \eqref{e:F-W-bar},
	which gives $W\in (\hs_cX)(\R)$, as required.
\end{proof}

\begin{cor}
	\label{l:G}
	Let $G=\PGL_{n,\R}$. For $0\le p\le n$,
	let the cocycle  $\ctil_p=(\pi(a_p),\Theta)\in Z^1\AA$ be as in Lemma \ref{l:H1:n=2k}.
	Write $_p G=\hs_{\ctil_p}G$.
	Then $_pG\cong \PU(p,n-p)$; namely,
	\[(\hs_pG)(\R)\cong \{g\in\PGL(n,\C)\ |\ \bar g^t\cdot c_p\cdot g=c_p\}.\]
\end{cor}

\begin{proof}
	The corollary follows from Lemma \ref{l:cG(R)} with $F=c_p\coloneqq\pi(a_p)$.
\end{proof}

\begin{cor}\label{p:X}
	Let $X=\Gr_{n,k,\R}$  where $n=2k\ge 4$.
	For $0\le p\le k$,
	let the cocycle  $\ctil_p=(\pi(a_p),\Theta)\in Z^1\AA$ be as in Lemma \ref{l:H1:n=2k}.
	Write $_p X=\hs_{\ctil_p}X$.
	
	\begin{enumerate}
		\item[\rm (i)] The set of $\R$-points $(\hs_p X)(\R)$
		is in a canonical $(_p G)(\R)$-equivariant bijection with the {\em isotropic Grassmannian}
		$\GrI(n,k,p)$, that is, the set of  $k$-dimensional  subspaces
		$W\subset V=\C^n$ that are totally  isotropic
		with respect to the Hermitian form
		\[\mcH_p(x,y)=\bar x^t\cdot c_p\cdot y\]
		where $x,y\in\C^n$ are $k$-dimensional column vectors.
		
		\item[\rm (ii)] This set $(\hs_p X)(\R)$ is non-empty if and only if $p=k$.
	\end{enumerate}
\end{cor}

Here we say that $W$ is totally isotropic with respect to $\mcH_p$  if  $\mcH_p(x,y)=0$ for all $x,y\in W$.

\begin{proof}
	Assertion (i) follows from Lemma \ref{l:cX(R)} with $F=c_p$.  Note that $(c_p)^{-1}=c_p$\hs.
	
	If $p<k$, then the Hermitian form $\mcH_p$ admits no $k$-dimensional isotropic subspaces.
	If $p=k$, then $\mcH_k$ admits a $k$-dimensional isotropic subspace
with basis $e^1-e_{k+1},\  e_2-e_{k+2},\ \dots,\  e_k-e_{2k}$.
This proves (ii).
\end{proof}

\begin{rem}
	By the Witt theorem for Hermitian forms, the group $(\hs_k G)(\R)=\PU(k,k)$
	acts on the isotropic Grassmannian $(\hs_k X)(\R)=\GrI(k,k)$ transitively;
	see, for instance, Dieudonn\'e \cite[Assertion (3) in Section I.11]{Dieudonne}.
\end{rem}

\begin{subsec}\label{append matrix b_k}
	Set
	\[b_k=\ii\begin{pmatrix}0& -E_k\\ E_k,&0\end{pmatrix} \in\GL(2k,\C)\quad\ \text{where}\ \,\ii^2=-1. \]
	Then
	\[\bar b_k^t=b_k\hs, \ \,b_k^{-1}=b_k\hs.\]
	Set
	\[d_k=\pi(b_k)\in\PGL(2k,\C), \ \ \db_k=(d_k,\Theta)\in Z^1(\Gamma,\AA).\]
	Consider the Hermitian form $\cF$ on $\C^{2k}$ given by
	\[ \cF(x,y)=\bar x\hs^t\hs b_k\hs y\quad\ \text{for}\ \, x,y\in \C^{2k}.\]
\end{subsec}

\begin{cor}\label{cor iso gr}
	\begin{enumerate}
		\item[\rm (i)] $(\hs_{\db_k}\hm G)(\R)=\{g\in \PGL(2k,\C)\ |\ g\hs d_k\hs \bar g^t=d_k\}\hs.$
		\item[\rm (ii)]  $(\hs_{\db_k}\hm X)(\R)=\{W\in \Gr_{2k,k}(\C)\ |\ \cF|_W=0\}.$
	\end{enumerate}
	The set $(\hs_{\db_k}\hm X)(\R)$ is nonempty: it contains the $k$-dimensional subspace $W_0$ with basis $\{e_1\hs,\dots,e_k\}$.
\end{cor}

\begin{proof}
	Assertion (i) follows from Lemma \ref{l:cG(R)},
	and assertion (ii) follows from Lemma \ref{l:cX(R)}.
\end{proof}

\noindent

\noindent
E.~V.: Departamento de Matem{\'a}tica, Instituto de Ci{\^e}ncias Exatas,
Universidade Federal de Minas Gerais,
Av. Ant{\^o}nio Carlos, 6627, CEP: 31270-901, Belo Horizonte,
Minas Gerais, BRAZIL,

\noindent email: {\tt VishnyakovaE\symbol{64}googlemail.com}
\bigskip

\noindent M.~B.: Raymond and Beverly Sackler School of Mathematical Sciences,
Tel Aviv University, 6997801 Tel Aviv, ISRAEL,

\noindent email: {\tt borovoi@tauex.tau.ac.il}

\end{document}